\theoremstyle{plain}
\newtheorem{prop}{Proposition}[section]
\newtheorem{teorema}[prop]{Theorem}
\newtheorem{lemma}[prop]{Lemma}
\newtheorem{cor}[prop]{Corollary}
\theoremstyle{definition}
\newtheorem{definiz}[prop]{Definition}
\theoremstyle{remark}
\newtheorem{oss}[prop]{\textsc{Remark}}
\renewcommand{\paragraph}[1]{\noindent\addtocounter{prop}{1}\arabic{section}.\arabic{prop}. #1}
\newcommand{\bbA}{\mathbb{A}}
\newcommand{\bbC}{\mathbb{C}}
\newcommand{\bbG}{\mathbb{G}}
\newcommand{\bbP}{\mathbb{P}}
\newcommand{\bbZ}{\mathbb{Z}}
\newcommand{\barx}{\overline{x}}
\newcommand{\Hom}{\text{Hom}}
\newcommand{\h}{H_{\text{\'et}}}
\newcommand{\id}{\text{id}}
\newcommand{\insieme}[2]{\left\{\,#1\,\middle|\, #2\,\right\}}
\newcommand{\Mor}{\text{Mor}}
\newcommand{\s}{\text{s}}
\newcommand{\scrO}{\mathscr{O}}
\newcommand{\scrG}{\mathscr{G}}
\newcommand{\scrZ}{\mathscr{Z}}
\newcommand{\sh}{\text{sh}}
\renewcommand{\phi}{\varphi}
\DeclareMathOperator{\B}{B\!}
\DeclareMathOperator{\Br}{Br}
\DeclareMathOperator{\Char}{char}
\DeclareMathOperator{\coker}{coker}
\DeclareMathOperator{\gal}{Gal}
\DeclareMathOperator{\im}{im}
\DeclareMathOperator{\N}{N}
\DeclareMathOperator{\pic}{Pic}
\DeclareMathOperator{\res}{res}
\DeclareMathOperator{\sez}{\Gamma}
\DeclareMathOperator{\spec}{Spec}
\DeclareMathOperator{\val}{v}
\author[Poma]{Flavia Poma}
\title[\'Etale cohomology of a DM curve with coefficients in $\mathbb{G}_m$]{\'Etale cohomology of a DM curve-stack\\ with coefficients in $\mathbb{G}_m$}
\date{\today\\
MSC classes: 14F20 (Primary), 14A20 (Secondary)}
\address{SISSA\\
via Bonomea, 265\\
34136 Trieste\\
Italy}
\email{poma@sissa.it\\
Phone: 0403787324}
\begin{document}
\begin{abstract}
  We compute \'etale cohomology groups $\h^i(X, \mathbb{G}_m)$ in
  several cases, where $X$ is a smooth tame Deligne-Mumford stack of
  dimension $1$ over an algebraically closed field. We have complete
  results for orbicurves (and, more generally, for twisted nodal
  curves) and in the case all stabilizers are cyclic; we give partial
  results and examples in the general case. In particular, we show
  that if the stabilizers are abelian then $\h^2(X, \bbG_m)$ does not
  depend on $X$ but only on the underlying orbicurve and on the
  generic stabilizer.
\end{abstract}
\maketitle
\section{Introduction}
A classical theorem of Tsen states that a function field $K$ of
dimension $1$ over an algebraically closed field is
quasi-algebraically closed (Theorem~\ref{theorem_qac_tsen}). As a
consequence, the \'etale cohomology groups $\h^r(\spec K, \bbG_m)$
vanish for $r \geq 1$ (Corollary~\ref{cor_qac}). Combining this result
with \'etale cohomology techniques, Milne showed that, for a connected
smooth curve $C$ over an algebraically closed field, $\h^r(C, \bbG_m)$
vanishes for $r \geq 2$ (\cite{milne}, III.2.22(d)). \'Etale
cohomology in low degree with coefficients in $\bbG_m$ can be
interpreted geometrically. It is well-known for a scheme $X$ that
$\h^1(X, \bbG_m)\cong \pic (X)$ (\cite{milne}, III.4.9). Moreover, for
a smooth variety $X$ over a field, $\h^2(X, \bbG_m)$ is isomorphic to
the Brauer group $\Br (X)$ (\cite{milne}, IV.2.15). It follows that
computing the groups $\h^r(X, \bbG_m)$ contributes to a better
understanding of geometric properties of $X$. As an example, the
vanishing of $\h^2(X, \bbG_m)$ implies that every gerbe over $X$
banded by a finite group (of order not divided by the characteristic
of the base field) is obtained as a finite number of root
constructions (\cite{cadman}). More in general, once we know $\h^r(X,
\bbG_m)$, we can use Kummer sequence to compute $\h^r(X, \mu_n)$, for
all $n$ not divided by the characteristic of the base field.

In this paper we study the groups $\h^r(X, \bbG_m)$ where $X$ is a
connected smooth tame Deligne-Mumford stack of dimension $1$ over an
algebraically closed field $k$. Such an $X$ admits a natural map $X
\rightarrow C$ to its coarse moduli space, which is a connected smooth
curve over $k$; moreover the map $X \rightarrow C$ factors via an
\'etale gerbe $X \rightarrow Y$, where $Y$ is
an orbicurve (\cite{AOV}, Appendix~A).

We generalize to algebraic stacks the \'etale cohomology techniques
used by Milne and we obtain a description of the groups $\h^r(X,
\bbG_m)$ (Theorem~\ref{theorem_cohom}). In particular, we show that if
the stabilizers are abelian then $\h^2(X, \bbG_m)$ depends only on the
orbicurve $Y$ and the generic stabilizer, not on the structure
of the gerbe $X \rightarrow Y$ (Proposition~\ref{cor_h2}). This result
suggests to investigate two special cases: when $X=Y$ is an orbicurve
(Corollary~\ref{cor_orbicurve}) and when the gerbe $X \rightarrow Y$
is trivial (Proposition~\ref{prop_trivial_gerbe}).
The cohomology of orbicurves, and more in general of twisted nodal
curves, has been described in \cite{chiodo}, Theorem~3.2.3. We include
it for completeness, since we give a different proof
(Proposition~\ref{prop_twisted}). We show that, for a twisted nodal
curve $Y$, the vanishing of $\h^2(Y, \bbG_m)$ implies that every
$G$-gerbe over $Y$ banded by a finite group $G$ (of order not divided
by $\Char k$), can be obtained as a finite number of root
constructions (Section~\ref{subsection_abeliangerbes}). This fact has
been used in \cite{andreini} and \cite{chiodo} to study the moduli
stack of twisted stable maps and the associated Gromov-Witten
invariants.
Finally, we show with two examples that, in general, the higher
cohomology groups $\h^r(X, \bbG_m)$ cannot be computed knowing only
the base of the gerbe $X \rightarrow Y$ and the banding group
(Section~\ref{section_example}).
\subsection*{Acknoledgements}
I would like to thank my Ph.D. advisor, Barbara Fantechi, for
suggesting me this problem and for all the helpful discussions and
suggestions. I'm grateful to Angelo Vistoli for teaching me the key
techniques used here. I would also like to thank Andrew Kresch and
Matthieu Romagny for corrections and suggestions.
\subsection*{Notations}
We write $\Br (K)$ for the Brauer group of a field $K$. With the word
stack we always mean a Deligne-Mumford algebraic stack in the sense of
\cite{DM}. All stacks are assumed to be separated and of finite type
over the base field. For the notion of tame stack see \cite{AOV}. An
orbicurve is a stack of dimension $1$ with trivial generic
stabilizer. We denote by $\scrO_{S,\barx}^{\sh}$ the strictly
Henselian local ring of a scheme (or an algebraic stack) $S$ at a
geometric point $\barx \rightarrow S$. We denote by $\mu_n$ the sheaf
defined by the group scheme $\spec \sfrac{\bbZ[t]}{(t^n -1)}$.
\section{Preliminaries}\label{section_prem}
We recall a few classical theorems about quasi-algebraically closed
fields and cohomology of finite abelian groups (for more details see
\cite{shatz}, chapter IV, and \cite{weibel}, chapter VI).
\begin{definiz}
A field $K$ is quasi-algebraically closed if every non constant
homogeneous polynomial $f(x_1, \ldots, x_N)$ of degree $d<N$ with
coefficients in $K$ has a non-trivial zero in $K$.
\end{definiz}
\begin{prop}[\cite{shatz}, IV.3 Corollary~1]\label{prop_qac}
Let $K$ be a quasi-algebraically closed field and let $G$ denote the
absolute Galois group $\gal(\sfrac{K_{\s}}{K})$, where $K_{\s}$ is a
separable closure of $K$. Then
\begin{enumerate}
\item $\Br (K) =0$;
\item $H^r(G,T)=0$ for $r \geq 2$ and for all discrete torsion
  $G$-modules $T$;
\item $H^r(G,M)=0$ for $r \geq 3$ and for all discrete $G$-modules $M$.
\end{enumerate}
\end{prop}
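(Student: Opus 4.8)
The three parts are linked, and the plan is to establish $(1)$ directly and then deduce $(2)$ and $(3)$ from it, using along the way the classical fact that a finite extension of a quasi-algebraically closed field is again quasi-algebraically closed. For $(1)$ I would show that the only central division algebra over $K$ is $K$ itself, which by Wedderburn's theorem gives $\Br(K)=0$. Indeed, suppose $D$ is a central division algebra over $K$ of dimension $n^2$ with $n\geq 2$. Its reduced norm $\operatorname{Nrd}_D\colon D\to K$ is, after fixing a $K$-basis of $D$, a homogeneous polynomial of degree $n$ in $n^2$ variables with coefficients in $K$; moreover $\operatorname{Nrd}_D(x)=0$ precisely when $x$ is not invertible, which in a division algebra means $x=0$. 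Since $n<n^2$, applying the hypothesis to $\operatorname{Nrd}_D$ produces a nontrivial zero, contradicting the previous sentence. For the extension fact, given a form $f$ of degree $d<N$ in $N$ variables over a finite separable extension $L/K$, I would expand each variable in a $K$-basis of $L$ and compose with $\N_{L/K}$: this yields a form over $K$ of degree $d\,[L:K]$ in $N\,[L:K]$ variables, still with $d\,[L:K]<N\,[L:K]$, hence with a nontrivial $K$-zero; since $\N_{L/K}(a)=0$ forces $a=0$, this gives a nontrivial $L$-zero of $f$. In particular $\Br(L)=0$ for every finite separable $L/K$, by $(1)$ applied to $L$.

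For $(2)$ it suffices, after writing a discrete torsion module as the direct limit of its finite submodules and decomposing finite modules into primary parts, to prove $\operatorname{cd}_\ell(G)\leq 1$ for every prime $\ell$. When $\ell\neq\Char K$: for each finite separable $L/K$, writing $G_L$ for its absolute Galois group, the Kummer sequence $1\to\mu_{\ell^\nu}\to L_{\s}^{\times}\to L_{\s}^{\times}\to 1$ (the map being the $\ell^\nu$-power) together with Hilbert~90, $H^1(G_L,L_{\s}^{\times})=0$, and $H^2(G_L,L_{\s}^{\times})=\Br(L)=0$ forces $H^2(G_L,\mu_{\ell^\nu})=0$; by the standard criterion for cohomological dimension (Serre, \emph{Galois Cohomology}, II.3; cf.\ \cite{shatz}, Ch.~IV) this yields $\operatorname{cd}_\ell(G)\leq 1$. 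When $\ell=\Char K=p>0$ one uses instead that the absolute Galois group of any field of characteristic $p$ has $\operatorname{cd}_p\leq 1$, via the Artin--Schreier sequence $0\to\bbZ/p\bbZ\to K_{\s}\to K_{\s}\to 0$ and the vanishing of the higher cohomology of the additive group $K_{\s}$. Hence $\operatorname{cd}(G)\leq 1$, which is precisely $(2)$.

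Finally $(3)$ follows from $\operatorname{scd}(G)\leq\operatorname{cd}(G)+1\leq 2$. Concretely, for an arbitrary discrete $G$-module $M$ and any integer $n$, the exact sequences $0\to{}_{n}M\to M\to nM\to 0$ (multiplication by $n$) and $0\to nM\to M\to M/nM\to 0$, together with the vanishing of $H^{\geq 2}(G,-)$ on the torsion modules ${}_{n}M$ and $M/nM$ given by $(2)$, show that multiplication by $n$ is an isomorphism on $H^r(G,M)$ for every $r\geq 3$ and every $n$; since $H^r(G,M)$ is a torsion group for $r\geq 1$ ($G$ being profinite and $M$ discrete), it must vanish.

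The step I expect to be genuinely delicate is the passage from ``$\Br(L)=0$ (or its $\ell$-primary part vanishes) for every finite separable $L/K$'' to the bound $\operatorname{cd}_\ell(G)\leq 1$: this needs the dévissage that reduces a cohomological dimension bound to the vanishing of $H^2$ with $\mu_{\ell^\nu}$-coefficients, plus the separate characteristic-$p$ argument. By contrast, the reduced-norm computation in $(1)$, the norm-form argument for finite extensions, and the formal manipulations in $(3)$ are routine.
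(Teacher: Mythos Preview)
The paper does not prove this proposition; it is quoted with a citation to \cite{shatz}, IV.3, Corollary~1, and used as a black box. Your argument is correct and follows the standard route (essentially that of Shatz and Serre): the reduced-norm trick for $(1)$, the norm-form argument to propagate the $C_1$ property to finite extensions, the criterion identifying $\operatorname{cd}_\ell(G_K)\le 1$ with the vanishing of $\ell$-torsion in $\Br(L)$ for all finite separable $L/K$ (together with Artin--Schreier at $\ell=\Char K$), and the strict cohomological dimension bound $\operatorname{scd}\le\operatorname{cd}+1$ for $(3)$.
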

\begin{cor}\label{cor_qac}
Let $K$ be a quasi-algebraically closed field. Then $\h^r(\spec K,
\bbG_m)=0$ for $r \geq 1$.
\end{cor}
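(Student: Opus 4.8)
The plan is to reduce the statement to the vanishing of Galois cohomology provided by Proposition~\ref{prop_qac}(2), via a comparison between étale cohomology of $\spec K$ and group cohomology of its absolute Galois group. First I would recall the standard identification: for a field $K$ with separable closure $K_\s$ and Galois group $G = \gal(\sfrac{K_\s}{K})$, the small étale site of $\spec K$ is equivalent to the site of continuous $G$-sets, and under this equivalence the sheaf $\bbG_m$ corresponds to the discrete $G$-module $K_\s^\times$. Consequently there is a canonical isomorphism $\h^r(\spec K, \bbG_m) \cong H^r(G, K_\s^\times)$ for all $r \geq 0$ (this is \cite{milne}, III.1.7 and the surrounding discussion).

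Next I would treat the cases by degree. For $r = 1$, Hilbert's Theorem~90 gives $H^1(G, K_\s^\times) = 0$; alternatively one can invoke $\h^1(\spec K, \bbG_m) \cong \pic(\spec K) = 0$ since $\spec K$ has only the trivial line bundle. For $r = 2$, the group $H^2(G, K_\s^\times)$ is by definition (the torsion part is all of) the Brauer group $\Br(K)$, and this vanishes by Proposition~\ref{prop_qac}(1) — or directly by Proposition~\ref{prop_qac}(2), noting that $K_\s^\times$ is not torsion but its Galois cohomology in degrees $\geq 2$ is still controlled because $H^2(G, K_\s^\times) = \Br(K)$ is torsion. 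For $r \geq 3$, I would apply Proposition~\ref{prop_qac}(3) with $M = K_\s^\times$, which is a discrete $G$-module, to conclude $H^r(G, K_\s^\times) = 0$ directly.

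The one point requiring care — and the step I expect to be the main obstacle to a clean write-up — is the treatment of $r = 2$: Proposition~\ref{prop_qac} as stated kills $H^r(G, T)$ for $r \geq 2$ only for \emph{torsion} modules $T$, and $H^r(G, M)$ for $r \geq 3$ for arbitrary $M$, but it only asserts $\Br(K) = 0$ for degree $2$ with the non-torsion module $K_\s^\times$. So in degree $2$ one genuinely needs part~(1), not a formal consequence of part~(2). I would therefore organize the proof as: identify $\h^r(\spec K, \bbG_m) \cong H^r(G, K_\s^\times)$; handle $r=1$ by Hilbert~90; handle $r=2$ by $\h^2 \cong \Br(K) = 0$ using Proposition~\ref{prop_qac}(1); handle $r \geq 3$ by Proposition~\ref{prop_qac}(3). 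A cosmetic alternative, if one prefers a uniform argument for $r \geq 2$, is to pass to the colimit over finite subextensions and use that $K_\s^\times = \varinjlim L^\times$ with each $H^r(G, -)$ commuting with filtered colimits, but this does not circumvent the need for part~(1) in degree $2$.
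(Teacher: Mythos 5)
Your proposal is correct and follows essentially the same route as the paper: identify $\h^r(\spec K,\bbG_m)$ with Galois cohomology via \cite{milne}, III.1.7 (respectively with $\pic(\spec K)=0$ in degree $1$), use $\Br(K)=0$ from Proposition~\ref{prop_qac}(1) in degree $2$, and Proposition~\ref{prop_qac}(3) for $r\geq 3$. Your remark that degree $2$ genuinely requires part~(1) rather than part~(2) is a correct and welcome clarification, but it does not change the argument.
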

\begin{proof}
By \cite{milne}, III.1.7 and Proposition~\ref{prop_qac},
\begin{equation*}
\qquad\qquad\qquad\quad\;\,\h^r(\spec K,\bbG_m) =
\begin{cases}
\pic (\spec K) =0 & \text{if }r=1\\
H^2(G, K_{\s}^*)= \Br (K)=0 & \text{if }r=2\\
H^r(G, K_{\s}^*)=0 & \text{if }r \geq 3 \text{.}\,\;\quad\qquad\qquad\qquad\qedhere
\end{cases}
\end{equation*}
\end{proof}
\begin{teorema}[\cite{shatz}, IV.3 Theorem~24]\label{theorem_qac_tsen}
Let $K$ be a function field of dimension $1$ over an algebraically
closed field $k$. Then $K$ is quasi-algebraically closed.
\end{teorema}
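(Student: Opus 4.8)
The plan is to prove it in two steps: first for the rational function field $k(t)$, and then for a general $K$ by reduction to the rational case, using that a finite extension of a quasi-algebraically closed field is again quasi-algebraically closed --- recall that any function field of dimension $1$ over $k$ is a finite extension of some $k(t)$.

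For $k(t)$ I would argue by a dimension count. Given a non-constant homogeneous $f(x_1,\dots,x_N)$ of degree $d<N$, after clearing denominators I may assume its coefficients lie in $k[t]$, of degree at most $m$, say. For a parameter $s$ to be fixed later, I look for a solution of the form $x_i=\sum_{j=0}^{s}c_{ij}t^j$ with the $c_{ij}\in k$ unknown: this is a family depending on $N(s+1)$ parameters. Expanding, $f(x_1,\dots,x_N)$ becomes a polynomial in $t$ of degree at most $ds+m$, whose $ds+m+1$ coefficients are homogeneous forms of degree $d$ in the $c_{ij}$ over $k$. Imposing $f(x_1,\dots,x_N)=0$ is therefore a system of $ds+m+1$ homogeneous equations in $N(s+1)$ unknowns over $k$; since $N-d\ge 1$, for $s$ large enough there are strictly more unknowns than equations, so over the algebraically closed field $k$ the system has a nontrivial solution $(c_{ij})$, and the resulting $(x_i)\in k[t]^N\subset k(t)^N$ is a nontrivial zero of $f$.

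For general $K$ I would write $K$ as a degree-$n$ extension of $K_0=k(t)$, fix a $K_0$-basis $\omega_1,\dots,\omega_n$ of $K$, and given a form $f(y_1,\dots,y_N)$ of degree $d<N$ over $K$ substitute $y_j=\sum_{i=1}^{n}x_{ij}\omega_i$ with new variables $x_{ij}$, so that $f(y_1,\dots,y_N)=\sum_{l=1}^{n}\omega_l\,g_l(x)$ with each $g_l\in K_0[x]$ homogeneous of degree $d$. Then I would take $\Phi(x)=\N_{K/K_0}\!\big(\sum_l\omega_l g_l(x)\big)$, computed as the determinant of the matrix of multiplication by $\sum_l\omega_l g_l(x)$ on $K$; this is a form over $K_0$ of degree $nd<nN$ in the $nN$ variables $x_{ij}$ (and if it vanishes identically any nonzero point already works). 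By the first step $\Phi$ has a nontrivial zero $x^0$ over $K_0$, and since the norm of a nonzero element of the field extension $K/K_0$ is nonzero this forces $\sum_l\omega_l g_l(x^0)=0$, i.e.\ $f$ vanishes at the nonzero point $y^0_j=\sum_i x^0_{ij}\omega_i$.

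The substantive input --- and the step I expect to be the real obstacle --- is the classical fact invoked in the first step: over an algebraically closed field, a system of homogeneous polynomial equations in strictly more variables than equations has a nontrivial common solution (each hypersurface cuts down the dimension of projective space by at most one). The rest (clearing denominators, the parameter count, the norm-form reduction) is bookkeeping. Alternatively, one could run the counting argument uniformly over any $K=k(C)$ by letting the $x_i$ range over a Riemann--Roch space $L(nP)$, noting $f(x_1,\dots,x_N)\in L(dnP+D_0)$ for a fixed divisor $D_0$ absorbing the poles of the coefficients; Riemann--Roch makes the relevant dimensions $N(n+1-g)$ and $dn+\deg D_0+1-g$, and $N>d$ again makes the former dominate for $n\gg 0$, bypassing the reduction step.
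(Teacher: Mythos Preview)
The paper does not supply its own proof of this statement: it is quoted as background, with a citation to Shatz. So there is nothing in the paper to compare your argument against.

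Your proof is correct and is precisely the classical argument for Tsen's theorem (and essentially what one finds in Shatz). The two-step structure --- a parameter count over $k(t)$ using the projective dimension theorem over the algebraically closed field $k$, followed by the norm-form reduction to pass to finite extensions --- is the standard route. Your identification of the substantive input (that over an algebraically closed field a system of homogeneous equations in strictly more unknowns than equations has a nontrivial common zero) is exactly the point where the hypothesis on $k$ enters. The alternative Riemann--Roch version you sketch at the end is also a well-known variant and works just as well, trading the explicit reduction to $k(t)$ for a uniform dimension estimate on a smooth projective model of $K$.
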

\begin{teorema}[\cite{shatz}, IV.3 Theorem~27]\label{theorem_qac_lang}
Let $K$ be the field of fractions of an Henselian discrete valuation
ring $R$ with algebraically closed residue field. Let $\hat{K}$ be the
completion of $K$, and assume that $K \subset \hat{K}$ is a separable
field extension. Then $K$ is quasi-algebraically closed.
\end{teorema}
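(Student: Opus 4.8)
\emph{Strategy and reduction to the complete case.} This is, in essence, a theorem of Lang, and I would argue in two stages. Let $f$ be homogeneous of degree $d<N$ over $K$. Multiplying by a suitable element of $K^{*}$, we may assume its coefficients lie in $R$ and generate the unit ideal. Granting first the statement for the completion $\hat R$ — a complete discrete valuation ring with the same algebraically closed residue field — the form $f$ has a nontrivial zero over the fraction field $\hat K$ of $\hat R$; scaling by an element of $\hat K^{*}$ we may take this zero in $\hat R^{N}$ with at least one coordinate, say the $j$-th, a unit, and then setting $X_{j}=1$ gives a solution in $\hat R^{N-1}$ of the single equation $g=0$, where $g\in R[X_{i}:i\neq j]$. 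Now the hypothesis that $K\subset\hat K$ is separable says precisely that the formal fibre of $R\to\hat R$ over the generic point of $\spec R$ — namely $\spec\hat K$ — is geometrically regular, i.e.\ that $R$ is excellent; hence Artin approximation (Popescu's theorem; or, specifically for discrete valuation rings, Greenberg's approximation theorem, which is likely what the cited reference uses) applies and the solution of $g=0$ over $\hat R$ can be replaced by one over $R$. Homogenizing again yields a nontrivial zero of $f$ in $K^{N}$, so it remains to treat the complete case.

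\emph{The complete case.} Assume $R=\hat R$, with algebraically closed residue field $k$ and uniformizer $\pi$. Here I would follow the method of Tsen's theorem (Theorem~\ref{theorem_qac_tsen}): look for a zero of $f$ whose coordinates are power series in $\pi$ with unknown coefficients in $k$, truncated to bounded level. Requiring $f(x)\equiv0$ to the appropriate precision turns this into a system of polynomial equations over $k$ whose number of equations grows like $d$ times the truncation level while the number of unknowns grows like $N$ times it; since $N>d$ and $k$ is algebraically closed, the associated $k$-schemes $Z_{M}$ of truncated solutions are nonempty of dimension tending to infinity, hence contain points other than the trivial one. The $Z_{M}$ form an inverse system of finite type $k$-schemes; passing to eventually stable images one obtains surjective transition maps between nonempty schemes, so the inverse limit is nonempty, and by completeness of $R$ a point of it is an honest zero of $f$ in $R^{N}$ — nontrivial, provided the tower has not collapsed into the locus where every coordinate is divisible by $\pi$.

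\emph{Main obstacle.} The delicate step is exactly that last proviso. One must organize the successive approximations so that a solution with some coordinate a unit survives through all truncation levels, and this is not automatic: the reduction of $f$ modulo $\pi$ may define an everywhere non-reduced hypersurface — a perfect power, for instance $f\equiv h^{p}\pmod{\pi}$ when $p\mid d$, but already $f\equiv h^{2}\pmod{\pi}$ in any characteristic — so the special fibre has no smooth $k$-point that one could lift straightforwardly by Hensel's lemma. Making the dimension count and the limiting argument cooperate to produce a genuinely nontrivial zero is the real heart of Lang's theorem; everything surrounding it is bookkeeping.
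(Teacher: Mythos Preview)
The paper does not prove this statement: it is quoted from \cite{shatz} as a black box, accompanied only by the remark (immediately following) that the separability hypothesis is satisfied when $R=\scrO_{X,\barx}^{\sh}$ for $X$ of finite type over a field. There is therefore no argument in the paper to compare your proposal against.

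For what it is worth, your two-stage outline is the standard route to Lang's theorem. The separability of $K\subset\hat K$ is precisely the excellence of the Henselian DVR $R$, so Greenberg's approximation theorem is available and reduces the question to the complete case; and in the complete case the Tsen-style parameter count over the algebraically closed residue field is exactly Lang's method. You have also correctly isolated the genuine content: ensuring that the inverse system of truncated solution schemes retains a point with some coordinate a unit. Lang's argument handles this by a stratified dimension comparison --- the sublocus where every $a_{i0}$ vanishes corresponds, after dividing through by $\pi^{d}$, to the solution scheme of a related form at a lower truncation level, and one shows inductively that its dimension is strictly smaller --- which is what your ``Main obstacle'' paragraph gestures at without executing. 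As a sketch with the hard step honestly flagged, this is fine; as a proof, that step would still need to be written out.
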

\begin{oss}
The separability condition in Theorem~\ref{theorem_qac_lang} holds in
the case $R= \scrO_{X,\barx}^{\sh}$, with $X$ a scheme of finite type
over a field (\cite{milne}, III.2.22(b)).
\end{oss}
\begin{teorema}[\cite{weibel}, Theorem~6.2.2]\label{prop_cohom_groups}
Let $G=\sfrac{\bbZ}{d \bbZ}$ and let $A$ be a discrete
$G$-module. Then
\begin{equation*}
H^r(G,A)=
\begin{cases}
A^G & r=0\\
\sfrac{\ker \N_G}{I_G A} & r \equiv 1 \pod{2}\\
\sfrac{A^G}{\im \N_G} & r \equiv 0 \pod{2}\text{, }r>0\text{,}
\end{cases}
\end{equation*}
where $\N_G \colon A \rightarrow A$ is the norm $\N_G(a) = \sum_{g \in
  G}ga$, $A^G$ is the set of elements in $A$ fixed by the $G$-action,
and $I_GA$ is the subgroup of $A$ generated by elements $(ga-a)$, with
$a \in A$ and $g \in G$.
\end{teorema}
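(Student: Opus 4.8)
The statement is classical, and I would prove it by exhibiting an explicit $2$-periodic free resolution of $\bbZ$ over the group ring. Write $G=\langle\sigma\rangle$ with $\sigma^d=1$, and in $\bbZ[G]\cong\bbZ[\sigma]/(\sigma^d-1)$ put $D=\sigma-1$ and $\N=1+\sigma+\cdots+\sigma^{d-1}$. The plan is first to establish exactness of
\[
\cdots\xrightarrow{\;D\;}\bbZ[G]\xrightarrow{\;\N\;}\bbZ[G]\xrightarrow{\;D\;}\bbZ[G]\xrightarrow{\;\varepsilon\;}\bbZ\longrightarrow 0,
\]
with $\varepsilon$ the augmentation, and then to read off $H^r(G,A)$ as the cohomology of the complex obtained by applying $\Hom_{\bbZ[G]}(-,A)$.

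The one genuine step is the exactness, which (as $\bbZ[G]$ is commutative) comes down to three elementary facts about multiplication by $D$ and $\N$ on $\bbZ[G]$. First, $\ker\varepsilon$ is the augmentation ideal, which equals the principal ideal $(D)=\im D$. Second, $\ker D=(\N)=\im\N$: from $(\sigma-1)\sum_i a_i\sigma^i=0$ one gets, comparing coefficients, that all $a_i$ are equal, so the element is an integral multiple of $\N$. Third, $\ker\N=(D)$: since $\N\sigma=\N$ one has $\N\cdot\sum_i a_i\sigma^i=\bigl(\sum_i a_i\bigr)\N$, which vanishes exactly when $\sum_i a_i=0$, i.e.\ on the augmentation ideal. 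Combined with the relation $D\N=\sigma^d-1=0$, this gives the resolution.

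Since $\Hom_{\bbZ[G]}(\bbZ[G],A)\cong A$ naturally (via $f\mapsto f(1)$), applying $\Hom_{\bbZ[G]}(-,A)$ turns multiplication by $D$ into $a\mapsto\sigma a-a$ and multiplication by $\N$ into the norm $\N_G$, so $H^r(G,A)$ is the cohomology of
\[
A\xrightarrow{\;\sigma-1\;}A\xrightarrow{\;\N_G\;}A\xrightarrow{\;\sigma-1\;}A\xrightarrow{\;\N_G\;}\cdots.
\]
Reading this off: $H^0=\ker(\sigma-1)=A^G$; for odd $r$, $H^r=\ker\N_G/\im(\sigma-1)$; for even $r>0$, $H^r=\ker(\sigma-1)/\im\N_G=A^G/\im\N_G$ (note $\im\N_G\subseteq A^G$ because $\sigma\N_G=\N_G$). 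Finally one identifies $\im(\sigma-1)$ with $I_GA$: the inclusion $\subseteq$ is clear, and conversely $\sigma^ka-a=\sum_{j=0}^{k-1}\sigma^j(\sigma-1)a\in\im(\sigma-1)$, so every generator $ga-a$ of $I_GA$ lies in $\im(\sigma-1)$.

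The main obstacle is really just the exactness check of the periodic resolution; everything after it is formal bookkeeping. An alternative would be to bootstrap from $H^0$, $H^1$, $H^2$ by dimension shifting and the long exact cohomology sequence, but the periodic resolution displays the $2$-periodicity transparently and is the shortest route.
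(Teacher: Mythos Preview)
Your proof is correct and is exactly the standard argument via the $2$-periodic free resolution of $\bbZ$ over $\bbZ[G]$; this is in fact the proof given in the cited reference (Weibel, Theorem~6.2.2). The paper itself does not supply a proof of this statement---it is recorded in the preliminaries purely as a quotation from Weibel---so there is nothing further to compare.
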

\begin{cor}\label{cor_Z}
Let $G$ be a finite cyclic group acting trivially on $\bbZ$, then
\begin{equation*}
H^r(G, \bbZ)=
\begin{cases}
\bbZ & r=0\\
0 & r \equiv 1 \pod{2}\\
G & r \equiv 0 \pod{2}\text{, }r>0\text{.}
\end{cases}
\end{equation*}
\end{cor}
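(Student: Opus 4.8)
The plan is to specialize Theorem~\ref{prop_cohom_groups} to the module $A = \bbZ$ equipped with the trivial $G$-action, writing $d = |G|$ so that $G \cong \sfrac{\bbZ}{d\bbZ}$ as in the statement of that theorem.

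First I would compute the three ingredients that enter Theorem~\ref{prop_cohom_groups}. Because the action is trivial we have $ga - a = 0$ for all $a \in \bbZ$ and $g \in G$, so the subgroup $I_G\bbZ$ generated by such elements is $0$, and $\bbZ^G = \bbZ$. Likewise the norm $\N_G \colon \bbZ \to \bbZ$ is $a \mapsto \sum_{g \in G} ga = da$, i.e.\ multiplication by $d$; since $\bbZ$ is torsion-free and $d \neq 0$ this map is injective, so $\ker \N_G = 0$, while $\im \N_G = d\bbZ$.

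Substituting into the three cases of Theorem~\ref{prop_cohom_groups}: for $r = 0$ we get $H^0(G,\bbZ) = \bbZ^G = \bbZ$; for $r$ odd we get $H^r(G,\bbZ) = \sfrac{\ker \N_G}{I_G\bbZ} = \sfrac{0}{0} = 0$; and for $r$ even and positive we get $H^r(G,\bbZ) = \sfrac{\bbZ^G}{\im \N_G} = \sfrac{\bbZ}{d\bbZ}$, which is canonically isomorphic to $G$ since $G$ is cyclic of order $d$. This is exactly the asserted formula.

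As the whole argument is a direct substitution, there is no genuine obstacle here; the only point worth spelling out is the final identification $\sfrac{\bbZ}{d\bbZ} \cong G$, together with the (standard) remark that with trivial coefficients the cohomology does not depend on the choice of generator of $G$, so writing the answer simply as ``$G$'' is unambiguous.
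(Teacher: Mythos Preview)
Your argument is correct and is precisely the intended one: the paper states this result as an immediate corollary of Theorem~\ref{prop_cohom_groups} without writing out a proof, and your computation of $I_G\bbZ=0$, $\bbZ^G=\bbZ$, $\ker\N_G=0$, $\im\N_G=d\bbZ$ is exactly the specialization that justifies it.
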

% \begin{cor}\label{cor_K}
%   Let $G=\sfrac{\bbZ}{d\bbZ}$ and let $k$ be a field of characteristic not dividing $d$. Then
% \begin{equation*}
% H^r(G,k^*) =
% \begin{cases}
% k^* & r =0\\
% \mu_{d} & r \equiv 1 \pod{2}\\
% \sfrac{k^*}{{k^*}^{d}} & r \equiv 0 \pod{2}\text{, }r>0\text{,}
% \end{cases}
% \end{equation*}
% where $G$ acts trivially on $k^*$, $\mu_{d}=\ker d$ and ${k^*}^{d}= \im d$, with $k^* \xrightarrow{d} k^*$, $d(a)=a^d$.
% \end{cor}
\begin{lemma}\label{lemma_zero_maps}
  Let $G$ be a finite group and let $(J^\bullet,d)$ be a cochain
  complex of abelian groups on which $G$ acts trivially. Then, for $p
  \geq 0$, $q \geq 1$ and $r=2, \ldots, q+1$, the following map is
  zero \[\Phi_r^{p,q} \colon H^p(G, H^q(J^\bullet)) \rightarrow
  H^{p+r}(G,H^{q-r+1} (J^\bullet))\text{.}\]
\end{lemma}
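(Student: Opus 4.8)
The plan is to recognize $\Phi_r^{p,q}$ as the $d_r$-differential of the hypercohomology spectral sequence
\[
E_2^{p,q} = H^p\bigl(G, H^q(J^\bullet)\bigr) \;\Longrightarrow\; \mathbb{H}^{p+q}(G, J^\bullet)
\]
(the one with $H^q(J^\bullet)$ on the $E_2$-page, obtained by taking cohomology in the $J^\bullet$-direction first), and to prove that it degenerates at $E_2$. Granting that all its differentials vanish, we have $E_r^{p,q} = E_2^{p,q}$ for every $r \ge 2$, so $\Phi_r^{p,q} = d_r$ is genuinely a homomorphism out of the whole group $H^p(G, H^q(J^\bullet))$, and it is zero. (The argument will in fact give $d_r = 0$ for \emph{all} $r \ge 2$; the stated range $q \ge 1$, $2 \le r \le q+1$ is simply the non-vacuous part, since for $J^\bullet$ concentrated in non-negative degrees the remaining $d_r$ have target $0$ on degree grounds.)

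First I would invoke that $\bbZ$ is hereditary, so $\mathrm{Ext}^i_\bbZ = 0$ for $i \ge 2$, and hence every bounded-below complex of abelian groups is formal: there is an isomorphism $J^\bullet \simeq \bigoplus_q H^q(J^\bullet)[-q]$ in $D(\bbZ)$, the successive obstructions ($k$-invariants) to splitting off the cohomology modules lying in groups of the shape $\mathrm{Ext}^{\ge 2}_\bbZ(-,-) = 0$. Because $G$ acts trivially on $J^\bullet$, this complex is the image of a complex of abelian groups under the exact, triangulated ``trivial action'' functor $D(\bbZ) \to D(\bbZ[G])$; applying this functor carries the splitting over, so $J^\bullet \simeq \bigoplus_q H^q(J^\bullet)[-q]$ already in $D(\bbZ[G])$, each $H^q(J^\bullet)$ being given the trivial action.

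Next I would apply $R\Gamma(G,-)$. The spectral sequence above is invariant under quasi-isomorphism of complexes of $\bbZ[G]$-modules and is additive in the complex variable, so it splits as the direct sum over $q$ of the spectral sequences of the shifted modules $H^q(J^\bullet)[-q]$; each of these has its $E_2$-page concentrated in the single row $q$, hence supports no nonzero differential, and therefore neither does the spectral sequence of $J^\bullet$. If one prefers to stay out of the derived category, one can instead replace $J^\bullet$ at the outset by a quasi-isomorphic complex $F^\bullet$ of \emph{free} abelian groups carrying the trivial action (harmless, since $H^q(F^\bullet) = H^q(J^\bullet)$) and note that, writing $Z^q, B^q \subseteq F^q$ for the cocycles and coboundaries, the split exact sequences $0 \to Z^q \to F^q \to B^{q+1} \to 0$ produce an honest decomposition of complexes of $\bbZ[G]$-modules $F^\bullet \cong \bigoplus_q [\,B^q \hookrightarrow Z^q\,]$, the $q$-th summand sitting in degrees $q-1$, $q$ and having cohomology $H^q(J^\bullet)$ concentrated in degree $q$; then the same single-row argument applies verbatim.

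The step I expect to require the most care is the purely formal one: matching $\Phi_r^{p,q}$, as it is actually built in the paper, with the spectral-sequence differential $d_r$, and checking that the invariance-under-quasi-isomorphism and additivity properties used above are exactly the ones relevant to that construction. For instance, if $\Phi_r^{p,q}$ is assembled from the connecting homomorphisms of the brutal truncations $0 \to \sigma^{\ge n+1}J^\bullet \to \sigma^{\ge n}J^\bullet \to J^n[-n] \to 0$, one verifies in the standard way that the iterated composite is $d_r$ and then quotes the degeneration just proved. Apart from the vanishing $\mathrm{Ext}^{\ge 2}_\bbZ = 0$, there is no genuine content to supply.
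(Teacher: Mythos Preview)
Your proof is correct and takes a genuinely different route from the paper's. The paper works directly with cochains: for $r=2$ it represents a class by a cocycle $f\colon G^p\to H^q(J^\bullet)$ and argues that one can choose a lift $\tilde f\colon G^p\to Z^q(J^\bullet)$ with $d_G\tilde f=0$, using that for the trivial action the cocycle condition is a fixed $\bbZ$-linear system in the values of $f$; higher $r$ are then handled by a short induction. You instead recognise $\Phi_r$ as the differential $d_r$ of the hypercohomology spectral sequence for $R\Gamma(G,J^\bullet)$ and kill it structurally: since $\bbZ$ is hereditary, any bounded-below complex of abelian groups is formal, and because the action is trivial this splitting already holds in $D(\bbZ[G])$, so the spectral sequence is a direct sum of single-row spectral sequences and collapses at $E_2$. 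Your concrete variant---replace $J^\bullet$ by a complex $F^\bullet$ of free abelian groups, split $0\to Z^q\to F^q\to B^{q+1}\to 0$ using that $B^{q+1}$ is free, and decompose $F^\bullet$ as a direct sum of two-term complexes---is entirely elementary and avoids derived-category language. What your approach buys is a transparent structural reason for the degeneration and an immediate generalisation to trivial actions over any hereditary base ring; what the paper's approach buys is that it stays with explicit cochain representatives, though its lifting step (solving the same $\bbZ$-linear system compatibly in $Z^q$ and in $H^q$) is more delicate than the exposition suggests. Your flagged caveat is harmless: the paper's recursive description of $\Phi_r$ is exactly the staircase recipe for $d_r$ in the double complex $C^{p,q}=\mathrm{Map}(G^p,J^q)$, and in both applications (Propositions~\ref{cor_h2} and~\ref{prop_trivial_gerbe}) the lemma is invoked precisely to conclude $E_\infty^{p,q}=E_2^{p,q}$, which is your degeneration statement.
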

\begin{proof}
Let assume $r=2$.  The map $\Phi_2^{p,q}$ is defined as follows. Let
$\xi \in H^p(G, H^q(J^\bullet))$, then $\xi$ is the class of a
cycle $f \colon G^p \rightarrow H^q(J^\bullet)$. Let
$\tilde{f}\colon G^p \rightarrow Z^q(J^\bullet)$ be a lifting of $f$
(we write $Z^q(J^\bullet)$ for cycles in $J^q$). If we denote by
$d_G$ the boundary map for the group cohomology, then we have
$d_G(f)=0$, hence $d_G(\tilde{f}) \colon G^{p+1} \rightarrow d(J^{q-1})$.
In particular, there exists a map $\nu_f \colon G^{p+1} \rightarrow
J^{q-1}$ such that $d \circ \nu_f =d_G(\tilde{f})$. Then $\Phi_2^{p,q} (\xi)= \pi
\circ d_G(\nu_f)$, where $\pi \colon Z^q(J^\bullet) \rightarrow
H^q(J^\bullet)$ is the projection.
It is enough to show that $d_G(\tilde{f})=0$, because
then $\im \nu_f \subset Z^{q-1}(J^\bullet)$ and
therefore $\pi \circ d_G(\nu_f)= d_G(\pi \circ \nu_f)=0$.
Since the action of $G$ is trivial, for all $(g_1, \ldots, g_{p+1}) \in G^{p+1}$,
\[d_G(\tilde{f})(g_1, \ldots, g_{p+1})\!\!=\!\!\tilde{f}(g_2, \ldots,
g_{p+1}) +\!\sum_{i=1}^p\!{(-1)}^i\tilde{f}(g_1, \ldots, g_ig_{i+1},
\ldots, g_{p+1})\!+\!{(-1)}^{p+1} \tilde{f}(g_1, \ldots, g_p)\text{.}\]
If we require $d_G(\tilde{f})=0$, then we get a linear system of $n^{p+1}$ equations
of the form
\begin{align}\label{eq:dG}
x_{(g_2, \ldots,
g_{p+1})} + \sum_{i=1}^p {(-1)}^ix_{(g_1, \ldots, g_ig_{i+1},
\ldots, g_{p+1})} + {(-1)}^{p+1} x_{(g_1, \ldots, g_p)} = 0\text{,}
\end{align}
in the indeterminates $\insieme{x_{\underline{g}}}{\underline{g}=(g_1,
  \ldots, g_p) \in G^p}$, where $n$ is the order of $G$. Since
$d_G(f)=0$, there exists a solution of these
equations in $H^q(J^\bullet)$. In particular there is a set $I$ of indeterminates whose
values can be chosen freely, and the values of the others are
determined by~\eqref{eq:dG}. If $I = \emptyset$ then there is only the
trivial solution. Notice that $\pi$ preserves
relations~\eqref{eq:dG}. Hence we can assign values
$z_{\underline{g}}\in Z^p(J^\bullet)$ to the indeterminates in $I$ so
that $\pi(z_{\underline{g}})=f(\underline{g})$. Then,
using equations~\eqref{eq:dG}, we find a solution $z_{\underline{g}}\in Z^p(J^\bullet)$ of the system~\eqref{eq:dG}. It follows
that the map $\tilde{f}$ defined by $\tilde{f}(\underline{g})= z_{\underline{g}}$ is a lifting of
 $f$ such that $d_G(\tilde{f})=0$. Hence $\Phi_2^{p,q}=0$, for all $p
 \geq 0$ and $q \geq 1$.

The maps $\Phi_r^{p,q}$ are defined recursively.
Let $\xi \in
H^p(G, H^q(J^\bullet))$, and consider $\Phi_r^{p,q}(\xi)$. Let $\phi
\colon G^{p+r} \rightarrow H^{q-r+1}(J^\bullet)$ be a cycle that
represents $\Phi_r^{p,q}(\xi)$. We know that \[\Phi_2^{p+r,
  q-r+1}(\Phi_r^{p,q}(\xi)) = d_G(\pi \circ \nu_\phi)=0\text{,}\] with
$\nu_\phi$ defined as above, then we set $\Phi_{r+1}^{p,q}(\xi)= \pi \circ
\nu_\phi$.
Now we prove the statement by induction on $r$. Assume that
$\Phi_r^{p,q}=0$.  Let $\xi \in H^p(G, H^q(J^\bullet))$, then $\phi =
\Phi_r^{p,q}(\xi)=0$. It follows that there exists $\rho \colon
G^{p+r} \rightarrow Z^{q-r}(J^\bullet)$ which is a lifting of
$\phi$. Hence we can take $\nu_\phi = d_G(\rho)$ and we have \[\pi
\circ \nu_\phi = \pi \circ d_G(\rho) = d_G(\pi \circ \rho) =
0\text{.}\qedhere\]
\end{proof}
\section{Setting}\label{section_setting}
Let $X$ be a connected smooth tame Deligne-Mumford stack of dimension
$1$ over an algebraically closed field $k$.  Let $\eta$ be the generic
point of $X$ and let $g \colon \eta \rightarrow X$ be the
inclusion. We denote by $G_0$ the generic stabilizer. Moreover if
$\sigma$ is a closed point of $X$ with stabilizer $G_\sigma$, we write $\sigma
\colon \sigma \hookrightarrow X$ for the inclusion and we denote by
$X(k)$ the set of closed points of $X$.  Let $C$ be the coarse moduli
space of $X$, we denote by $\pi \colon X \rightarrow C$ the natural
map.

We begin with some general remarks on the stack $X$ and its coarse
moduli space.
\begin{prop}\label{prop_cms}
The coarse moduli space $C$ is a connected smooth curve over $k$.
\end{prop}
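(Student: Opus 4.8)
The plan is to combine the Keel--Mori existence theorem for the coarse moduli space with the \'etale-local structure of $X$ over $C$. First I would recall that, since $X$ has finite inertia, it admits a coarse moduli space $\pi \colon X \to C$ with $C$ a separated algebraic space of finite type over $k$, and $\pi$ induces a homeomorphism $|X| \to |C|$ of underlying topological spaces (see \cite{AOV}, Appendix~A). Connectedness of $C$ is then immediate from connectedness of $X$, and $\dim C = \dim X = 1$.

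Next I would invoke the local presentation of a tame Deligne--Mumford stack over its coarse space: for every geometric point of $C$ there is an \'etale neighbourhood $C' = \spec A^G \to C$ with $X \times_C C' \cong [U/G]$, where $U = \spec A$ is affine, $G$ is a finite group acting on $U$ over $C'$, and $C' = U/G$ (\cite{AOV}). Since $X \times_C C' \to X$ is \'etale, the stack $[U/G]$ is smooth of dimension $1$ over $k$; as $U \to [U/G]$ is a $G$-torsor, hence finite \'etale, the $k$-algebra $A$ is regular of dimension $1$. Now $A$ is integral and of finite type, hence module-finite, over $A^G$, so $A^G$ is a finitely generated $k$-algebra by the Artin--Tate lemma; it is moreover normal, because $A$ is, and of dimension $1$. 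Being a Noetherian normal ring of dimension $1$, $A^G$ is regular, hence smooth over $k$ since $k$ is perfect.

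Finally, the maps $C' \to C$ constructed above form an \'etale covering of $C$ by smooth affine schemes, so $C$ is smooth over $k$ by descent of smoothness; and $C$ is a scheme because a separated algebraic space of dimension $\le 1$ of finite type over a field is a scheme. Thus $C$ is a connected smooth curve over $k$, as claimed. The step I expect to require genuine input is the local presentation $X \times_C C' \cong [U/G]$ with $C' = U/G$, which rests on the structure theory of tame Deligne--Mumford stacks (\cite{AOV}); the remaining ingredients — stability of normality under passing to invariants, the coincidence of regularity and smoothness in dimension one over a perfect field, and descent of smoothness — are routine.
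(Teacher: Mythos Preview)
Your argument is correct and in fact slightly more robust than the paper's. Both proofs invoke the \'etale-local presentation $X\times_C C'\cong[U/G]$ with $C'=U/G$ from \cite{AOV}, but they diverge in how they show that $U/G$ is smooth. The paper uses the tameness hypothesis to linearize the $G$-action (\cite{milne}, I.3.24), reducing to $G=\mu_n$ acting on $\spec k[t]$ by scaling, so that $U/G=\spec k[t^n]$ is visibly smooth. You instead observe that $A$ is regular of dimension~$1$, hence normal, so $A^G$ is normal, Noetherian of dimension~$1$, and therefore regular --- smooth over the perfect field $k$. Your route avoids the linearization step entirely and thus does not use that $|G|$ is prime to $\Char k$; it would prove the same conclusion for any smooth separated Deligne--Mumford curve with finite inertia, tame or not. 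The paper's approach, on the other hand, yields an explicit \'etale-local model $\spec k[t^n]$ for $C$, which is occasionally useful elsewhere (e.g.\ in the proof of Lemma~\ref{lemma_beta}).

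One small point: when you say ``$A^G$ is normal because $A$ is'', the standard argument assumes $A$ is a domain. Since $U$ is smooth it is a disjoint union of integral curves, and after possibly replacing $U$ by the $G$-orbit of a single component (which does not change $A^G$ up to a factor) you may assume $A$ is integral; this is harmless but worth a word. Your additional remark that $C$ is a scheme (being a separated one-dimensional algebraic space of finite type over a field) is a point the paper leaves implicit.
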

\begin{proof}
  Since $X$ is connected and $\pi$ is surjective, also $C$ is
  connected. Moreover, there exists an \'etale cover of $C$ consisting
  of schemes of the form $\sfrac{U}{G}$, where $U$ is a smooth affine
  scheme of dimension $1$ over $k$ and $G$ is a finite group of order
  not divided by $\Char k$ (\cite{AOV},
  Theorem~3.2). By \cite{milne}, Proposition~I.3.24, locally in the
  \'etale topology, there exists a linearization of the action of $G$
  on $U$. In particular $G \subset k^*$ is a finite subgroup, hence $G
  = \mu_n$. It follows that, locally in the \'etale topology,
  $\sfrac{U}{G}=\spec {(k [t])}^{\mu_n}= \spec k[t^n]$, which is
  smooth of dimension $1$.
\end{proof}
\begin{teorema}[\cite{milne}, III.2.22(d)]\label{teorema_milne}
Let $C$ a smooth curve over an algebraically closed
field. Then
\begin{equation*}
\h^r(C, \bbG_m) =
\begin{cases}
\sez (C, \bbG_m) & r=0\\
\pic (C) & r=1\\
0 & r \geq 2\text{.}
\end{cases}
\end{equation*}
\end{teorema}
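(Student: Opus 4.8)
The plan is to reduce the computation to the function field $K=k(C)$ by means of the inclusion $g\colon\eta\hookrightarrow C$ of the generic point, using the \emph{divisor} short exact sequence of \'etale sheaves on $C$
\[
0\longrightarrow\bbG_m\longrightarrow g_*\bbG_{m,\eta}\xrightarrow{\ \val\ }\bigoplus_{x}i_{x*}\bbZ\longrightarrow 0\text{,}
\]
where $x$ ranges over the closed points of $C$ and $i_x\colon x\hookrightarrow C$ is the inclusion. First I would check exactness on stalks: at a geometric point $\barx$ over a closed point, the ring $\scrO_{C,\barx}^{\sh}$ is a strictly Henselian discrete valuation ring (here one uses that $C$ is a smooth curve), so the sequence of stalks is $0\to(\scrO_{C,\barx}^{\sh})^*\to K_{\barx}^*\xrightarrow{\val}\bbZ\to 0$, where $K_{\barx}$ is the fraction field of $\scrO_{C,\barx}^{\sh}$; at the geometric generic point the skyscraper term is $0$ and $\bbG_m\to g_*\bbG_{m,\eta}$ is an isomorphism on stalks.

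Next I would compute the cohomology of the two outer terms. Since each closed point $x$ is $\spec k$ with $k$ algebraically closed, it has no higher \'etale cohomology, and as $i_{x*}$ is exact we get $\h^r\bigl(C,\bigoplus_x i_{x*}\bbZ\bigr)\cong\bigoplus_x\h^r(x,\bbZ)$, which is $\bigoplus_x\bbZ$ for $r=0$ and $0$ for $r\geq 1$. For $g_*\bbG_{m,\eta}$ I would use the Leray spectral sequence for $g$: the stalk of $R^qg_*\bbG_{m,\eta}$ at a geometric point $\barx$ of $C$ is $\h^q(\spec K_{\barx},\bbG_m)$, and the field $K_{\barx}$ is quasi-algebraically closed when $\barx$ lies over a closed point (Theorem~\ref{theorem_qac_lang} and the remark following it), so that Corollary~\ref{cor_qac} gives $\h^q(\spec K_{\barx},\bbG_m)=0$ for $q\geq 1$; when $\barx$ lies over $\eta$, the ring $\scrO_{C,\barx}^{\sh}=K_{\barx}$ is a separable closure of $K$ and again $\h^q(\spec K_{\barx},\bbG_m)=0$ for $q\geq 1$. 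Hence $R^qg_*\bbG_{m,\eta}=0$ for $q\geq 1$, the spectral sequence degenerates, and $\h^r(C,g_*\bbG_{m,\eta})\cong\h^r(\spec K,\bbG_m)$. Finally $K$ is quasi-algebraically closed by Tsen's theorem (Theorem~\ref{theorem_qac_tsen}), so Corollary~\ref{cor_qac} yields $\h^r(\spec K,\bbG_m)=K^*$ for $r=0$ and $0$ for $r\geq 1$.

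Plugging these into the long exact cohomology sequence of the divisor sequence, in degree $0$ one gets $0\to\sez(C,\bbG_m)\to K^*\xrightarrow{\val}\bigoplus_x\bbZ\to\h^1(C,\bbG_m)\to 0$, identifying $\sez(C,\bbG_m)$ with the global units and $\h^1(C,\bbG_m)$ with the cokernel of the divisor map, i.e. with $\pic(C)$ (consistently with $\h^1(C,\bbG_m)\cong\pic(C)$). For $r\geq 2$ the relevant portion reads $\h^{r-1}\bigl(C,\bigoplus_x i_{x*}\bbZ\bigr)\to\h^r(C,\bbG_m)\to\h^r(C,g_*\bbG_{m,\eta})$, and both flanking groups vanish ($r-1\geq 1$ and $r\geq 1$), whence $\h^r(C,\bbG_m)=0$. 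The step I expect to be the main obstacle is the one in the second paragraph: establishing the divisor sequence in the \'etale topology and, above all, computing $R^qg_*\bbG_m$ by identifying its stalks with the $\bbG_m$-cohomology of the fraction fields of the strict henselizations, so that Theorem~\ref{theorem_qac_lang} can be invoked; once that is done, everything else is formal manipulation of the long exact sequence.
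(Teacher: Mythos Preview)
Your sketch is correct and is precisely the classical argument from \cite{milne}, III.2.22, which the paper simply cites without reproducing a proof. The paper does, however, carry out exactly this strategy in the stacky generality in Section~4 (Proposition~\ref{prop_succ_fasci}, Lemma~\ref{lemma_closedpoint}, Lemma~\ref{lemma_genericpoint}): the Weil-divisor sequence, the vanishing of $R^qg_*\bbG_{m,\eta}$ via Theorem~\ref{theorem_qac_lang} on stalks, and the degeneration of the Leray sequence for the closed immersions $\sigma$. So your proof is essentially the scheme-theoretic specialization of what the paper later does for stacks, and no alternative route is taken.
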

\begin{teorema}[\cite{AOV}, A.1]\label{theorem_AOV}
  Let $M$ be a regular Deligne-Mumford stack over a field $k$. Let
  $\scrG$ be the closure of the fibers of the inertia stack $I(M)$
  over the generic points of irreducible components of the moduli
  space of $M$. Then there exist a regular Deligne-Mumford stack $Y$
  with trivial generic stabilizer and an \'etale morphism $M
  \xrightarrow{\pi} Y$, such that $M$ is a $\scrG$-gerbe banded by
  $\scrG$ over $Y$.
\end{teorema}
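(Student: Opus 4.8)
The plan is to read this as a rigidification statement: $Y$ should be the rigidification of $M$ obtained by killing $\scrG$ inside the inertia $I(M)$, and the fact that $M \to Y$ is a $\scrG$-gerbe will then come for free from the universal property of the rigidification. So the steps, in order, are: (i) check that $\scrG$ is a flat (in fact \'etale) normal subgroup stack of $I(M)$; (ii) form $Y$ together with $\pi \colon M \to Y$ by the rigidification construction; (iii) verify that $\pi$ is \'etale and that $Y$ inherits regularity; and (iv) compute the generic stabilizer of $Y$ and see that it is trivial.

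For step (i): since $M$ is a separated Deligne-Mumford stack, $I(M) \to M$ is finite and unramified, so by the local structure of unramified morphisms it is, \'etale-locally on $M$, a disjoint union of closed substacks of $M$. As $M$ is regular its connected components are integral, so a closed substack meeting a generic point of such a component is that whole component; hence $\scrG$, being the closure of the fibres of $I(M)$ over the generic points of the components of the moduli space, is \'etale-locally a disjoint union of components of $M$. In particular $\scrG \to M$ is finite \'etale and contains the identity section. To see that $\scrG$ is normal in $I(M)$, note that over the generic points this is automatic, since there $\scrG$ is the entire stabilizer; and since $\scrG$ and $\scrG \times_M \scrG$ are reduced (being \'etale over the reduced $M$) while $I(M) \to M$ is separated, the composition, inversion and conjugation morphisms of $I(M)$ restrict to $\scrG$ as soon as they do over a dense open substack. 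For step (ii) I would then apply the general construction of the rigidification of a stack along a flat normal subgroup stack of its inertia (Abramovich-Corti-Vistoli; Romagny): it produces $Y$ and a morphism $\pi \colon M \to Y$, universal among morphisms out of $M$ whose induced map on inertia kills $\scrG$, and it exhibits $M$ as a gerbe over $Y$ banded by $\scrG$.

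For step (iii), since the band $\scrG$ is \'etale over $M$ (equivalently over $Y$), $\pi$ is an \'etale morphism: one checks this after passing to a cover of $Y$ over which $\pi$ admits a section, where $\pi$ becomes the structure morphism of the classifying stack of a finite \'etale group scheme. In particular $\pi$ is smooth and surjective, so regularity descends from $M$ to $Y$. For step (iv), at a generic point $\eta$ of a component of $M$ the gerbe structure gives $I(Y)_{\pi(\eta)} \cong I(M)_\eta / \scrG_\eta$, and $\scrG_\eta = I(M)_\eta$ by construction, so $Y$ has trivial stabilizer at $\pi(\eta)$; since $\pi$ induces a homeomorphism on underlying topological spaces, the points $\pi(\eta)$ are the generic points of the components of $Y$, and therefore the generic stabilizer of $Y$ is trivial. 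This produces the desired \'etale $\scrG$-gerbe $\pi \colon M \to Y$.

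The step I expect to be the real obstacle, everything else being either formal or a citation, is step (i) --- more precisely, the flatness, hence \'etaleness, of $\scrG \to M$. This is exactly where regularity of $M$ enters: a closed substack through a generic point is a full component only because the components are unibranch, and in the relative or higher-dimensional form of the statement one genuinely has to go through the local structure of unramified morphisms rather than simply invoking ``the closure of a constant group''. The other mildly delicate input is from step (ii): that the rigidification exists and that $M \to Y$ is genuinely a gerbe banded by $\scrG$ --- but this is the general rigidification machinery, which I would take as given, since it is in essence the way \cite{AOV}, A.1 itself is proved.
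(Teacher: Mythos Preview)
The paper does not give a proof of this theorem at all: it is simply quoted from \cite{AOV}, Appendix~A, and immediately used. So there is no ``paper's own proof'' to compare against. Your sketch is nonetheless correct in outline and is exactly the rigidification argument that \cite{AOV} (building on Abramovich--Corti--Vistoli and Romagny) carries out: one shows $\scrG$ is a finite \'etale normal subgroup stack of $I(M)$, forms the rigidification $Y = M \mathbin{\!\!\fatslash} \scrG$, and reads off the gerbe, \'etaleness, regularity, and trivial generic stabilizer. You even flag this yourself in the last line of your proposal. The one place to be a little more careful than you are in step~(i) is the passage from ``normal over a dense open'' to ``normal everywhere'': you want to use that $\scrG$ is closed in $I(M)$ and that conjugation by $I(M)$ preserves $\scrG$ on a schematically dense open, together with the fact that $\scrG$ is reduced and separated over $M$; as written your density argument is slightly informal but fixable.
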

\paragraph{}
By Theorem~\ref{theorem_AOV}, $X$ is an \'etale gerbe over an
orbicurve $Y$.  It is known that $Y$ is obtained from its coarse
moduli space $C$ by applying a finite number of root
constructions. Explicitly, there exist distinct points $p_1, \ldots,
p_N \in C(k)$ and integers $d_1, \ldots, d_N \geq 2$, with  $N\geq 0$, such that $X =
X_1 \times_C \cdots \times_C X_N$, where $X_l =
\sqrt[d_l]{\sfrac{p_l}{C}}$, for $l=1, \ldots, N$ (for details on the
root construction see \cite{cadman}).  Let $\Sigma = \{\sigma_1,
\ldots, \sigma_N\}$ be the set of closed points of $X$ corresponding
to $p_1, \ldots, p_N \in C(k)$. Then $G_\sigma = G_0$ for $\sigma \in
X(k)$, $\sigma \notin \Sigma$, and $\sfrac{G_{\sigma_l}}{G_0} =
\sfrac{\bbZ}{d_l \bbZ}$ for $l=1, \ldots, N$.
\begin{prop}\label{prop_genericgerbe}
Let $\spec K$ be the generic point of $C$, then $\eta=\spec K \times \B G_0$.
\end{prop}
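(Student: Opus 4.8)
The plan is to realize $\eta$ as a gerbe over $\spec K$ and then to trivialize this gerbe using Tsen's theorem. First I would note that, since $Y$ is obtained from $C$ by finitely many root constructions at the closed points $p_1,\ldots,p_N$, the coarse-space map $Y\to C$ is an isomorphism over $C\setminus\{p_1,\ldots,p_N\}$; in particular $\spec K$ is the generic point of $Y$ too, and $\eta = X\times_C\spec K = X\times_Y\spec K$. One checks that this fibre product is a reduced, irreducible, $0$-dimensional tame Deligne--Mumford stack whose coarse space is $\spec K$, hence it is the residual gerbe at the generic point of $X$, i.e.\ exactly $\eta$, and it is a gerbe over $\spec K$. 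By the definition of $\scrG$ (Theorem~\ref{theorem_AOV}) this gerbe is banded by $\scrG|_{\spec K}$, that is, by the generic stabilizer $G_0$, a finite \'etale $K$-group scheme of order prime to $\Char k$ by tameness; consequently \'etale and fppf cohomology of $G_0$ and of its subgroups over $\spec K$ coincide.

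Next I would use the classification of gerbes with a fixed band. The neutral gerbe $\spec K\times\B G_0$ is one gerbe over $\spec K$ banded by $G_0$, so the set of isomorphism classes of gerbes over $\spec K$ with band $G_0$ is non-empty, hence a torsor under $\h^2(\spec K, Z(G_0))$, where $Z(G_0)$ is the centre of $G_0$; when $G_0$ is abelian this is simply the usual classification of $G_0$-gerbes by $\h^2(\spec K, G_0)$ with $\spec K\times\B G_0$ as the distinguished element. It therefore suffices to show that $\h^2(\spec K, Z(G_0))$ vanishes.

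This is where Tsen's theorem enters: $K$ is a function field of dimension $1$ over the algebraically closed field $k$, so by Theorem~\ref{theorem_qac_tsen} it is quasi-algebraically closed. Since $Z(G_0)$ is finite, $Z(G_0)(K_{\s})$ is a discrete torsion $\gal(\sfrac{K_{\s}}{K})$-module, and combining the identification $\h^2(\spec K, Z(G_0))\cong H^2(\gal(\sfrac{K_{\s}}{K}), Z(G_0)(K_{\s}))$ (\cite{milne}, III.1.7) with Proposition~\ref{prop_qac}(2) gives $\h^2(\spec K, Z(G_0))=0$. Hence the torsor above is a single point, so every gerbe over $\spec K$ banded by $G_0$ --- in particular $\eta$ --- is isomorphic, compatibly with the bands, to $\spec K\times\B G_0$. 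The step requiring the most care is the bookkeeping with bands in the possibly non-abelian case (checking that $\eta$ genuinely has band $G_0$ rather than an inner twist, and that gerbes with this band form a pseudo-torsor under $\h^2$ of the centre); if only the abelian case is needed, the argument reduces to the immediate computation $\h^2(\spec K, G_0)\cong H^2(\gal(\sfrac{K_{\s}}{K}), G_0(K_{\s}))=0$ via Proposition~\ref{prop_qac}(2).
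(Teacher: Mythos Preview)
Your proposal is correct and follows essentially the same route as the paper: realize $\eta$ as the pullback $\spec K\times_Y X$, hence a gerbe over $\spec K$ banded by the generic stabilizer, then kill the classifying group $\h^2(\spec K,\scrZ)$ (with $\scrZ$ the center) using Tsen's theorem together with Proposition~\ref{prop_qac}(2). The only difference is cosmetic: you spell out why $\spec K$ is also the generic point of $Y$ and discuss the band bookkeeping more carefully, whereas the paper states $\eta=\spec K\times_Y X$ directly and invokes Giraud for the classification.
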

\begin{proof}
By Theorem~\ref{theorem_AOV}, the stack $X$ is a gerbe over $Y$ banded
by $\scrG$. Let $\scrG_\eta$ be the sheaf over $\spec K$ induced by
$\scrG$. Notice that $\eta = \spec K \times_Y X$, hence $\eta
\rightarrow \spec K$ is a $\scrG_\eta$-gerbe banded by
$\scrG_\eta$. Recall that such gerbes are classified by $\h^2(\spec K,
\scrZ)$, where $\scrZ$ is the center of $\scrG_\eta$ (\cite{giraud},
IV.5.2). Let $\gal(\sfrac{K_s}{K})$ be the absolute Galois group of
$K$, then there is an isomorphism \[\h^r(\spec K, \scrZ) \cong
H^r(\gal(\sfrac{K_s}{K}), M)\text{,}\] with $M = \varinjlim_L
\scrZ(\spec L)$, where the limit is taken over all subfields $L\subset
K_s$ that are finite over $K$ (\cite{milne}, III.1.7). Notice that $M$
is a torsion module, hence, by
Proposition~\ref{prop_qac}, $H^r(\gal(\sfrac{K_s}{K}), M) =
0$ for $r \geq 2$. In particular $\h^2(\spec K, \scrZ)=0$.
\end{proof}
\begin{oss}\label{remark_trivialaction}
The action of $G_0$ over $K^*$ is trivial, since the sheaf
$\bbG_{m,\eta}$ comes from the sheaf $\bbG_{m, \spec K}$ (which is
equivariant under the action of $G_0$). It follows that ${\pi_\eta}_*
\bbG_{m,\eta}= \bbG_{m, \spec K}$, where $\pi_\eta \colon \eta
\rightarrow \spec K$ is the natural morphism.
Similarly, for $\sigma \in X(k)$, the action of $G_\sigma$ over $\bbZ$
is trivial and ${\pi_\sigma}_* \bbZ = \bbZ$, where
$\pi_\sigma \colon \sigma \rightarrow \spec k$.
\end{oss}
\section{The Weil-divisor exact sequence}
Throughout this section we will use the notations of
Section~\ref{section_setting}.
\begin{prop}\label{prop_succ_fasci}
There exists an exact sequence of sheaves on $X$ (with the \'etale topology)
\begin{equation}\label{eq:succ_fasci}
0 \rightarrow \bbG_{m,X} \xrightarrow{\alpha}
g_*\bbG_{m,\eta} \xrightarrow {\beta} \bigoplus_{\sigma \in
  X(k)}{\sigma}_*\bbZ_\sigma \rightarrow
0\text{;}
\end{equation}
$\alpha$ and $\beta$ are defined as follows: if $W
\xrightarrow{f} X$ is an étale morphism from a connected scheme and
$R(W)$ is the ring of rational functions on $W$, then $\sez (W,
\bbG_m) \xrightarrow{\alpha(W,f)} {R(W)}^*$ is induced by the morphism
$W\times_X \eta \rightarrow W$ and, for all $\lambda \in
{R(W)}^*$, \[\beta(W,f) (\lambda) = \sum_{w \in W(k)}
\val_w(\lambda)\text{,}\] where $\val_w$ is the discrete valuation on
$R(W)$ induced by $\scrO_{W,w}$.
\end{prop}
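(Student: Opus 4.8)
The plan is to check exactness of \eqref{eq:succ_fasci} stalk by stalk at geometric points of $X$, using that $X$ is smooth of dimension $1$ and tame, so that its strictly Henselian local rings are well understood. Given a geometric point $\barx \to X$, write $A = \scrO_{X,\barx}^{\sh}$. There are two cases: either $\barx$ lies over the generic point (so $A$ is a field, in fact the separable closure of the function field), or $\barx$ lies over a closed point $\sigma$. In the first case the third sheaf has zero stalk and one must check $\bbG_{m,X} \to g_*\bbG_{m,\eta}$ is an isomorphism on stalks; this is essentially the identity, since over the generic point the two sheaves agree. In the second case, smoothness of $X$ at a point with tame stabilizer means $A$ is (the strict Henselization of) a discrete valuation ring — here I would invoke Proposition~\ref{prop_cms} together with the local structure $\sfrac{U}{\mu_n} = \spec k[t^n]$ appearing in its proof, so that $A^{\sh}$ is a strictly Henselian DVR with residue field $k$ and fraction field a separable extension of $R(W)$. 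Then the stalk sequence becomes
\[
0 \to A^* \to \operatorname{Frac}(A)^* \xrightarrow{\val} \bbZ \to 0\text{,}
\]
which is the standard exact sequence for a DVR (the valuation is surjective because $A$ has a uniformizer, and its kernel is exactly the units). This gives exactness of sheaves.

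Next I would verify that $\alpha$ and $\beta$ as described on sections over an étale $W \xrightarrow{f} X$ are well-defined and are indeed the sheaf maps inducing the above stalk maps. For $\alpha$: an étale morphism from a connected scheme $W$ to $X$ has $W \times_X \eta$ the generic point(s) of $W$, so restriction of a global unit on $W$ to its generic fiber lands in $R(W)^*$, and this is compatible with the pushforward $g_*$ since $g\colon \eta \hookrightarrow X$ is the generic inclusion. For $\beta$: a rational function $\lambda \in R(W)^*$ has only finitely many zeros and poles, so $\sum_{w \in W(k)} \val_w(\lambda)$ is a finite sum, i.e. a well-defined element of $\bigoplus_w \bbZ$; one checks this is functorial in $W$ and that the composite $\beta \circ \alpha = 0$ because a global unit has trivial valuation everywhere. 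One should also note that $g_*\bbG_{m,\eta}$ really is a sheaf on the small étale site of $X$ (pushforward preserves sheaves) and that $W \times_X \eta$ genuinely computes its sections, which follows since $\eta \to X$ is a monomorphism with image the generic point.

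The main obstacle I expect is the local computation at a stacky point $\sigma \in \Sigma$: one must be careful that the strictly Henselian local ring of the \emph{stack} $X$ at such a point is still a DVR and not something with a residual gerbe or nontrivial automorphisms affecting the stalk. This is where tameness is essential — by Theorem~\ref{theorem_AOV} and the root-construction description in Section~\ref{section_setting}, étale-locally $X$ looks like $\sqrt[d]{\sfrac{p}{C}}$, whose local picture is $\spec k[t]$ modulo a $\mu_d$-action, and the strict Henselization at the origin of the stack is the strict Henselization of $k[t]$ at $0$, a strictly Henselian DVR. Once this identification is pinned down, the sheaf sequence is exact since it is exact on all stalks, completing the proof. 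A secondary subtlety is checking that $\bbZ_\sigma$ (the constant sheaf $\bbZ$ on the residual gerbe at $\sigma$, pushed forward) has the expected stalk $\bbZ$ at $\barx$ lying over $\sigma$; this uses Remark~\ref{remark_trivialaction}, which records that the $G_\sigma$-action on $\bbZ$ is trivial and ${\pi_\sigma}_*\bbZ = \bbZ$.
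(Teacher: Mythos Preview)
Your proposal is correct and follows essentially the same strategy as the paper: verify exactness on stalks at geometric points, reducing to the standard valuation sequence $0 \to A^* \to Q(A)^* \to \bbZ \to 0$ for a strictly Henselian DVR $A$. The only notable difference is organizational: the paper bypasses any discussion of what $\scrO_{X,\barx}^{\sh}$ means for a stack by immediately choosing an \'etale morphism $U \to X$ from a smooth connected curve through which $\barx$ factors, and computing all three stalks on $U$ (where the question is the classical one for schemes, citing \cite{milne}, II.3.9), whereas you speak directly of the strict Henselization of $X$ and then justify that it is a DVR via the local quotient/root-stack description. Both routes land in the same place; the paper's is slightly more economical since it sidesteps the worry you flag about residual gerbes --- once you pass to the atlas $U$, that worry simply does not arise.
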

\begin{proof}
If $W$ is a scheme as in the statement, then it is
normal and regular.
Moreover
\begin{align*}
\bbG_{m,X}(W,f) &= \sez (W,\bbG_m)\\
g_*\bbG_{m,\eta}(W,f) &= \sez (W \times_X \eta,\bbG_m) = {R(W)}^*\\
\bigoplus_{\sigma \in X(k)}\sigma_*\bbZ_\sigma (W,f) &= \bigoplus_{\sigma \in X(k)} \sez (W \times_X \sigma, \bbZ) = \bigoplus_{w \in W(k)} \bbZ \text{.}
\end{align*}
Therefore $\alpha(W,f)$ and $\beta(W,f)$ are well-defined. If $Y
\xrightarrow{g} X$ is an other étale morphism from a scheme, we form
the fiber product $h \colon Y \times_X W \rightarrow X$. Then the maps
induced by restrictions on $Y \times_X W$ by $\alpha(W,f)$ and
$\alpha(Y,g)$ coincide with $\alpha(Y \times_X W, h)$. A similar
argument holds for $\beta$. It follows that $\alpha$ and $\beta$
are well-defined.

Recall that the exactness of a sequence of sheaves on a stack can be
checked on stalks at geometric points.
Let $\barx \xrightarrow{x} X$ be a geometric point and let $f \colon
U\rightarrow X$ be an \'etale morphism from a smooth connected
curve, such that $x$ farctors through $f$. Then
\begin{align*}
{(\bbG_{m,X})}_{\barx}&={(\bbG_{m,U})}_{\barx}= A^*\text{,}\\
{(g_*\bbG_{m,\eta})}_{\barx} &={({g_U}_*\bbG_{m,\eta_U})}_{\barx} = {Q(A)}^*\text{,}
\end{align*}
where $A = \scrO_{U,\barx}^{\sh}$ (note that $A$ is a discrete
valuation ring), $Q(A)$ is its field of fractions, the morphism $g_U
\colon \eta_U \rightarrow U$ is the inclusion of the generic point of
$U$ (since $g$ is an open immersion and $U$ is irreducible, we obtain
$U \times_X \eta = \eta_U$). Moreover, for every $\sigma \in X(k)$,
$\sigma \times_X U$ is a set of closed points of $U$ whose image in
$X$ is $\sigma$.  Hence $(\sigma_*\bbZ_\sigma) =\bigoplus_{u \in
  \sigma \times_X U}u_*\bbZ$ and ${(u_*\bbZ)}_{\barx}$ is non zero if
and only if $u=x$, in which case ${(u_*\bbZ)}_{\overline{u}}=\bbZ$.
Therefore the sequence of stalks is \[0 \rightarrow A^* \rightarrow
{Q(A)}^* \xrightarrow{\val_A} \bbZ \rightarrow 0 \text{,}\] where
$\val_A$ is the discrete valuation on $Q(A)$ defined by $A$; this
sequence is exact by \cite{milne}, II.3.9.
\end{proof}
\paragraph{}
The short exact sequence~\eqref{eq:succ_fasci} induces a long exact
sequence of \'etale cohomology groups
\begin{multline}\label{eq:succ_cohom}
\cdots \rightarrow \h^r(X, \bbG_{m,X}) \rightarrow \h^r(X, g_*\bbG_{m,\eta}) \rightarrow \bigoplus_{\sigma \in X(k)}\h^r(X, \sigma_*\bbZ_\sigma) \rightarrow \h^{r+1}(X, \bbG_{m,X}) \rightarrow \cdots
\end{multline}
where we used the fact that \'etale cohomology commutes with arbitrary direct
sums of sheaves on a quasi-compact Deligne-Mumford stack (see
\cite{milne}, chapter III, Remark~3.6(d)).
\begin{oss}
By \cite{milne}, III.2.22, there is a short exact sequence in cohomology
\begin{equation}\label{eq:succ_C}
0 \rightarrow \sez (C, \bbG_m) \rightarrow K^* \xrightarrow{\beta_0}
\bigoplus_{x \in C(k)} \bbZ \rightarrow \pic (C) \rightarrow 0\text{,}
\end{equation}
obtained from the Weil-divisor exact sequence for $C$ (\cite{milne}, II.3.9).
\end{oss}
\begin{prop}\label{prop_alg_closed}
Let $L$ be a quasi-algebraically closed field and let $G$ be a finite
group of order not divided by the characteristic of $L$, acting
trivially on $L$. Let $F$ be a sheaf over the quotient stack
$[\sfrac{\spec L}{G}]$. If either $L$ is algebraically closed or $F=
\bbG_m$ then, for $r\geq 0$, \[\h^r([\sfrac{\spec L}{G}], F) \cong
H^r(G, \sez (\spec L, F))\text{.}\]
\end{prop}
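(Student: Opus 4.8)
The plan is to use the quotient stack structure and the fact that $G$ is finite of order prime to $\Char L$. First I would recall that a sheaf on $[\spec L / G]$ is the same as a $G$-equivariant sheaf on $\spec L$, i.e.\ a $G$-module $A = \sez(\spec L, F)$ (with some extra structure in the non-constant case, but the underlying object is a $G$-module); in particular $\sez([\spec L/G], F) = A^G$. The natural tool is the Leray (Cartan--Leray) spectral sequence for the map $p \colon \spec L \to [\spec L/G]$, which is a $G$-torsor. This gives $E_2^{p,q} = H^p(G, \h^q(\spec L, F))$ converging to $\h^{p+q}([\spec L/G], F)$. So I would reduce the claim to showing that $\h^q(\spec L, F) = 0$ for all $q \geq 1$, because then the spectral sequence degenerates and yields exactly $\h^r([\spec L/G], F) \cong H^r(G, \sez(\spec L, F))$.

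Next I would establish the vanishing $\h^q(\spec L, F) = 0$ for $q \geq 1$ in the two cases of the hypothesis. If $L$ is algebraically closed, then $\spec L$ has trivial \'etale site (it is a point with no nontrivial \'etale covers, since the absolute Galois group is trivial), so $\h^q(\spec L, F) = 0$ for all $q \geq 1$ and any sheaf $F$; this is immediate. If instead $F = \bbG_m$ but $L$ is only quasi-algebraically closed, then $\h^q(\spec L, \bbG_m) = 0$ for $q \geq 1$ by Corollary~\ref{cor_qac}. In either case the $E_2$-page has only the bottom row $q = 0$ surviving, so there is no room for differentials and no extension problem, giving the isomorphism $\h^r([\spec L/G], F) \cong H^r(G, A^G\text{-module }A) = H^r(G, \sez(\spec L, F))$ — here one must be slightly careful that the $G$-module appearing in $E_2^{p,0}$ is $\h^0(\spec L, F) = \sez(\spec L, F)$ with its $G$-action, which is exactly what the statement asserts.

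The one technical point I would want to treat carefully is the existence and shape of the descent/Cartan--Leray spectral sequence for the $G$-torsor $\spec L \to [\spec L/G]$ in the stack setting: one should either invoke it from the literature on cohomology of quotient stacks, or build it by hand from the fact that $[\spec L/G]$ has an \'etale atlas $\spec L$ with $\spec L \times_{[\spec L/G]} \spec L \cong \spec L \times G$ (using triviality of the $G$-action on the scheme $\spec L$, so the torsor is "split as a scheme" even though not as a stack), whence the \v{C}ech--to--derived spectral sequence for this atlas has $E_1$-terms $\h^q(\spec L \times G^{\times p}, F)$ and, after taking the vanishing of higher $\h^q$ into account, collapses to the bar resolution computing $H^\bullet(G, \sez(\spec L,F))$. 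The order of $G$ being prime to $\Char L$ is not actually needed for this argument per se (it is the quasi-algebraic closedness and the above vanishing that do the work), but it is harmless to keep it; I expect the main obstacle to be purely bookkeeping — making sure the $G$-action on $\sez(\spec L, F)$ coming out of the spectral sequence matches the equivariant-sheaf structure on $F$ — rather than anything deep.
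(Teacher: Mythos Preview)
Your proposal is correct and follows essentially the same route as the paper: both use the Hochschild--Serre (Cartan--Leray) spectral sequence for the Galois cover $\spec L \to [\spec L/G]$ and then kill the rows $q\geq 1$ by the algebraically closed hypothesis or by Corollary~\ref{cor_qac}. Your additional remarks on the construction of the spectral sequence and on the irrelevance of the tameness hypothesis for this particular argument are accurate but not needed beyond what the paper already assumes.
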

\begin{proof}
The natural map $\spec L \rightarrow [\sfrac{\spec L}{G}]$ is a Galois
covering with group $G$, then we can consider the Hochschild-Serre
spectral sequence \[E_2^{p,q}=H^p(G, \h^q(\spec L, F)) \Rightarrow
\h^{p+q}([\sfrac{\spec L}{G}], F)\text{.}\] We can view the groups
$\h^r(\spec L, F)$ as Galois cohomology groups of the absolute Galois
group of $L$ (\cite{milne}, III.1.7). If $L$ is algebraically closed
then $\h^r(\spec L, F) =0$ for $r \geq 1$. If $F=\bbG_m$ then, by
Corollary~\ref{cor_qac}, $\h^r(\spec L, \bbG_m) =0$ for $r \geq 1$. In
both cases the sequence degenerates and we get the statement.
\end{proof}
\begin{lemma}\label{lemma_closedpoint}
  Let $\sigma \in X(k)$ with stabilizer $G_\sigma$. Then $\h^r(X,
  \sigma_* \bbZ_{\sigma}) = H^r(G_\sigma, \bbZ)$, for $r\geq 0$.
\end{lemma}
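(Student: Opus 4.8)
The plan is to reduce $\h^r(X,\sigma_*\bbZ_\sigma)$ to the cohomology of the residual gerbe at $\sigma$ and then invoke Proposition~\ref{prop_alg_closed}. First I would observe that, since $k$ is algebraically closed, the closed point $\sigma$, regarded as a closed substack of $X$, is its own residual gerbe and is isomorphic to $\B G_\sigma=[\sfrac{\spec k}{G_\sigma}]$; under this identification $\bbZ_\sigma$ is the constant sheaf $\bbZ$ on $\B G_\sigma$, the morphism $\sigma\colon\sigma\hookrightarrow X$ is a closed immersion, and $\sez(X,\sigma_*(-))=\sez(\sigma,-)$.

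Next I would use that $\sigma_*$ is exact and preserves injective objects. Preservation of injectives is formal, since $\sigma_*$ has the exact left adjoint $\sigma^*$. Exactness of $\sigma_*$ can be checked on stalks at geometric points $\barx\to X$, where $(\sigma_*\mathcal{F})_{\barx}$ equals $\mathcal{F}_{\barx}$ if $\barx$ factors through $\sigma$ and vanishes otherwise, exactly as in the local computation used in the proof of Proposition~\ref{prop_succ_fasci}. Hence an injective resolution $\bbZ_\sigma\to I^\bullet$ on $\sigma$ pushes forward to an injective resolution $\sigma_*\bbZ_\sigma\to\sigma_*I^\bullet$ on $X$, and taking global sections gives
\[\h^r(X,\sigma_*\bbZ_\sigma)\cong\h^r(\sigma,\bbZ_\sigma)=\h^r(\B G_\sigma,\bbZ)\text{,}\qquad r\geq 0\text{.}\]
(Equivalently, the Leray spectral sequence for $\sigma$ degenerates because the higher direct images of a closed immersion vanish.)

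Finally, since $X$ is tame the order of $G_\sigma$ is not divisible by $\Char k$, and by Remark~\ref{remark_trivialaction} the $G_\sigma$-action on $\bbZ$ is trivial; so Proposition~\ref{prop_alg_closed} applies with $L=k$ algebraically closed, $G=G_\sigma$ and $F=\bbZ$, yielding
\[\h^r(\B G_\sigma,\bbZ)\cong H^r(G_\sigma,\sez(\spec k,\bbZ))=H^r(G_\sigma,\bbZ)\text{.}\]
Combining the two displays gives the statement. I expect the only point requiring real care to be the exactness and injective-preservation of $\sigma_*$ for the closed immersion $\sigma\hookrightarrow X$ of Deligne-Mumford stacks; once that is granted, the rest is a direct application of results already established in the excerpt.
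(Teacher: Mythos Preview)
Your proposal is correct and follows essentially the same route as the paper: both reduce $\h^r(X,\sigma_*\bbZ_\sigma)$ to $\h^r(\sigma,\bbZ)$ via the exactness of $\sigma_*$ for a closed immersion (the paper phrases this as degeneration of the Leray spectral sequence, which you also mention), and both then invoke Proposition~\ref{prop_alg_closed} with $L=k$ to identify $\h^r(\B G_\sigma,\bbZ)$ with $H^r(G_\sigma,\bbZ)$. The only cosmetic difference is that the paper justifies the identification $\sigma\cong\B G_\sigma$ by applying Theorem~\ref{theorem_AOV} to the zero-dimensional stack $\sigma$ and noting that banded gerbes over $\spec k$ are trivial, whereas you assert this directly as a fact about residual gerbes over an algebraically closed field.
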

\begin{proof}
Consider the Leray spectral sequence for the inclusion $\sigma
\xrightarrow{\sigma} X$, \[E_2^{p,q}=\h^p(X, R^q \sigma_*
\bbZ_{\sigma}) \Rightarrow \h^{p+q}(\sigma, \bbZ) \text{.}\] Since
$\sigma$ is a closed embedding, the functor $\sigma_*$ is exact
(\cite{milne}, II.3.6), hence $R^q \sigma_* \bbZ_{\sigma} = 0$ for $q
\geq 1$. Therefore the spectral sequence degenerates and
$\h^r(X, \sigma_* \bbZ_{\sigma}) \cong \h^r(\sigma, \bbZ)$ for $r \geq
0$. By Theorem~\ref{theorem_AOV}, $\sigma$ is a gerbe over $\spec k$
banded by $\scrG_\sigma$, and recall that such gerbes are classified by
$\h^2(\spec k, \scrG_\sigma)$, which vanishes since $k$ is algebraically
closed. It follows that $\sigma$ is the trivial $G_\sigma$-gerbe over
$\spec k$ and, by Proposition~\ref{prop_alg_closed}, $\h^r(\sigma,
\bbZ) \cong H^r(G_\sigma, \bbZ)$ for $r \geq 0$.
\end{proof}
\begin{lemma}\label{lemma_genericpoint}
  Let $G_0$ denote the stabilizer of $\eta$. Then $\h^r(X, g_*
  \bbG_{m,\eta}) = H^r(G_0, K^*)$, for $r \geq 0$.
\end{lemma}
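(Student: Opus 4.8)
The plan is to compute $\h^r(X, g_*\bbG_{m,\eta})$ by relating it to cohomology of $\spec K$, exploiting the gerbe structure on $\eta$. By Proposition~\ref{prop_genericgerbe}, the generic point $\eta$ is the trivial gerbe $\spec K \times \B G_0$, i.e.\ the quotient stack $[\sfrac{\spec K}{G_0}]$ with $G_0$ acting trivially on $K$. First I would use the Leray spectral sequence for the inclusion $g \colon \eta \rightarrow X$, namely $E_2^{p,q}=\h^p(X, R^q g_* \bbG_{m,\eta}) \Rightarrow \h^{p+q}(\eta, \bbG_{m,\eta})$, to reduce the computation of $\h^r(X, g_*\bbG_{m,\eta})$ to $\h^r(\eta, \bbG_{m,\eta})$. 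Unlike the closed-point case in Lemma~\ref{lemma_closedpoint}, here $g$ is an open immersion, so the higher direct images $R^q g_* \bbG_{m,\eta}$ need not vanish; I therefore expect to need a vanishing argument for these, or at least control of their contribution.

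The key point for the higher direct images is that the stalk of $R^q g_* \bbG_{m,\eta}$ at a geometric point $\barx \to X$ is computed on the strict henselization: writing $A = \scrO_{X,\barx}^{\sh}$ with fraction field $Q(A)$, one gets $(R^q g_* \bbG_{m,\eta})_{\barx} \cong \h^q(\spec Q(A), \bbG_m)$ (up to the gerbe structure at $\barx$, which is trivial as in Lemma~\ref{lemma_closedpoint}). Now $Q(A)$ is, for the underlying scheme-theoretic situation, the fraction field of a strictly henselian discrete valuation ring with algebraically closed residue field, hence quasi-algebraically closed by Theorem~\ref{theorem_qac_lang} together with the separability Remark; by Corollary~\ref{cor_qac}, $\h^q(\spec Q(A),\bbG_m)=0$ for $q\geq 1$. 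One must be slightly careful because $X$ is a stack, not a scheme, at points of $\Sigma$; but since $X$ is tame Deligne--Mumford and, étale-locally, of the form $[\sfrac{U}{\mu_n}]$ as in the proof of Proposition~\ref{prop_cms}, the local picture at $\barx$ is a quotient of a strictly henselian DVR, and one can pass to the cover $U$ where the scheme statement applies. This gives $R^q g_* \bbG_{m,\eta}=0$ for $q \geq 1$, so the Leray spectral sequence degenerates and $\h^r(X, g_*\bbG_{m,\eta}) \cong \h^r(\eta, \bbG_{m,\eta})$ for all $r \geq 0$.

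It remains to compute $\h^r(\eta, \bbG_{m,\eta})$. Since $\eta = [\sfrac{\spec K}{G_0}]$ with $G_0$ acting trivially on $K$ (Remark~\ref{remark_trivialaction}), and $\B G_0$ has order prime to $\Char k$ (tameness), I apply Proposition~\ref{prop_alg_closed} with $L = K$ and $F = \bbG_m$: this is exactly the case "$F = \bbG_m$" that the proposition is designed to handle, and $K$ is quasi-algebraically closed by Tsen's theorem (Theorem~\ref{theorem_qac_tsen}) since it is a function field of dimension $1$ over the algebraically closed $k$. Hence $\h^r(\eta, \bbG_{m,\eta}) \cong H^r(G_0, \sez(\spec K, \bbG_m)) = H^r(G_0, K^*)$ for $r \geq 0$, which is the claim.

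The main obstacle I anticipate is the stalk computation of $R^q g_* \bbG_{m,\eta}$ at the special points $\Sigma$: one needs to justify carefully that the strictly henselian local picture of the \emph{stack} $X$ at such a point still produces a quasi-algebraically closed fraction field, rather than something involving the group $G_\sigma$ in an essential way. The clean way around this is to note that $g_*\bbG_{m,\eta}$ and its higher direct images can be checked on an étale cover of $X$ by schemes of the form $U$ as above (the root-construction description), where the classical scheme-theoretic argument of Milne, III.2.22 applies verbatim; the stack structure then only enters through the final descent, which is harmless. Everything else is a formal use of spectral sequences and of results already established.
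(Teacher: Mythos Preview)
Your proposal is correct and follows essentially the same route as the paper: Leray spectral sequence for $g$, vanishing of $R^qg_*\bbG_{m,\eta}$ for $q\ge 1$ via the stalk computation on an \'etale scheme cover $U\to X$ (reducing to $\h^q(\spec Q(\scrO_{U,\barx}^{\sh}),\bbG_m)=0$ by Theorem~\ref{theorem_qac_lang} and Corollary~\ref{cor_qac}), and then Proposition~\ref{prop_alg_closed} applied to $\eta=[\sfrac{\spec K}{G_0}]$. The only cosmetic difference is that the paper works directly on the atlas $U$ from the outset rather than first writing $\scrO_{X,\barx}^{\sh}$ and then passing to $U$; your anticipated ``clean way around'' is exactly what the paper does.
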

\begin{proof}
Consider the Leray spectral sequence for $\eta \xrightarrow{g} X$, \[E_2^{p,q}=\h^p(X, R^q
g_* \bbG_{m, \eta}) \Rightarrow \h^{p+q}(\eta, \bbG_m)\text{.}\] Let
$\barx \xrightarrow{x} X$ be a geometric point and let $f \colon
U\rightarrow X$ be an \'etale morphism from a connected
curve, such that $\barx$ farctors through $f$. Then $\eta \times_X U$
is the generic point $\eta_U$ of $U$. Therefore \[{(R^q g_*\bbG_{m, \eta})}_{\barx} = {(R^q{g_U}_*\bbG_{m,
    \eta_U})}_{\barx} = \h^q(\spec \scrO_{U,\barx}^{\sh}\times_U \eta_U,
\bbG_m)= \h^q(\spec Q(\scrO_{U,\barx}^{\sh}), \bbG_m) = 0
\] for $q \geq 1$, where $Q(\scrO_{U,\barx}^{\sh})$ is the field of fractions of $\scrO_{U,\barx}^{\sh}$ and the last equality follows from the fact that
$Q(\scrO_{U,\barx}^{\sh})$ is quasi-algebraically closed
(Theorem~\ref{theorem_qac_lang}). Hence $R^qg_* \bbG_{m, \eta} = 0$
for $q \geq 1$ and $\h^r(X, g_*\bbG_{m, \eta}) \cong \h^r(\eta,
\bbG_m)$ for $r \geq 0$. Since $\eta = [\sfrac{\spec K}{G_0}]$ is the
trivial gerbe over $\spec K$ (Proposition~\ref{prop_genericgerbe}),
the statement follows by Proposition~\ref{prop_alg_closed}.
\end{proof}
\paragraph{}
By Lemma~\ref{lemma_closedpoint} and Lemma~\ref{lemma_genericpoint},
the sequence~\eqref{eq:succ_cohom} becomes
\begin{multline}\label{eq:succ_beta}
\cdots \rightarrow \h^{r}(X, \bbG_m) \rightarrow H^r(G_0, K^*)
\xrightarrow{\beta^{(r)}} \bigoplus_{\sigma\in
  X(k)}H^r(G_\sigma,\bbZ)\rightarrow \h^{r+1}(X, \bbG_m)
\rightarrow \cdots \text{.}
\end{multline}
\begin{lemma}\label{lemma_beta}
For every $r\geq 0$, the map $\beta^{(r)}$ in \eqref{eq:succ_beta}
factors as \[H^r(G_0,K^*) \xrightarrow{\beta_0^{(r)}}\bigoplus_{\sigma
  \in X(k)}H^r(G_0,\bbZ) \xrightarrow{\tau^{(r)}} \bigoplus_{\sigma
  \in X(k)}H^r(G_\sigma,\bbZ)\text{,}\] where $\beta_0^{(r)}$ is
induced by $\beta_0$ in \eqref{eq:succ_C} and $\tau^{(r)}$ is the
transfer map (\cite{weibel}, 6.7.16) on each component.
\end{lemma}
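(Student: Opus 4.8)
The plan is to analyze the map $\beta^{(r)}$ by tracking how the Weil-divisor sequence behaves under the Galois cover $\spec K \to \eta$ and the quotient presentations used to compute the cohomology. Recall from the proof of Proposition~\ref{prop_succ_fasci} that $\beta$ is the sum of local valuations, and from Lemma~\ref{lemma_genericpoint} and Lemma~\ref{lemma_closedpoint} that $\h^r(X, g_*\bbG_{m,\eta}) = H^r(G_0, K^*)$ and $\h^r(X, \sigma_*\bbZ_\sigma) = H^r(G_\sigma, \bbZ)$, both identifications coming from Hochschild--Serre spectral sequences that degenerate (using quasi-algebraic closedness and triviality of the relevant gerbes). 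The key point is that these identifications are functorial: the map of sheaves $\beta$ induces a compatible map of spectral sequences, so on $E_2$-pages it gives a map $H^r(G_0, K^*) \to \bigoplus_\sigma H^r(G_\sigma, \bbZ)$ which, by naturality, is computed by applying $H^r(-,-)$ to a map of pairs.

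First I would make the local picture precise. For each closed point $\sigma \in X(k)$, the stabilizer $G_\sigma$ contains $G_0$ (as explained in the paragraph after Theorem~\ref{theorem_AOV}), and on a strictly Henselian local ring the inclusion $A^* \hookrightarrow Q(A)^* \xrightarrow{\val_A} \bbZ$ is the stalk of \eqref{eq:succ_fasci}. Étale-locally near $\sigma$ the stack looks like $[\spec R / G_\sigma]$ for a suitable strictly Henselian DVR $R$, and the $G_\sigma$-action on the value group $\bbZ$ of $Q(R)$ is trivial (this is the analogue of Remark~\ref{remark_trivialaction}). The valuation map $Q(K_s)^* \to \bbZ$ used to compute $\beta_0$ in \eqref{eq:succ_C} over the coarse space $C$ is $G_0$-equivariant, and composing with the natural inclusion $G_0 \hookrightarrow G_\sigma$ realizes the two-step factorization: the first arrow $\beta_0^{(r)}$ is $H^r(G_0, -)$ applied to the valuation $K^* \to \bigoplus_\sigma \bbZ$ landing in $H^r(G_0,\bbZ)$ componentwise, and the second arrow $\tau^{(r)}$ is precisely the map $H^r(G_0,\bbZ) \to H^r(G_\sigma,\bbZ)$ induced on each component by the subgroup inclusion --- which, by the computation of $\h^r(X,\sigma_*\bbZ_\sigma)$ via the cover $\spec k \to \sigma = \B G_\sigma$ factoring through $\B G_0$, is the transfer (corestriction) map in the sense of \cite{weibel}, 6.7.16.

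Concretely, I would argue as follows: the morphism $\sigma = \B G_\sigma \to \B G_0$ (coming from $G_0 \subset G_\sigma$) together with the identifications of Lemma~\ref{lemma_closedpoint} shows that the component of $\beta^{(r)}$ at $\sigma$ is the composite of the transfer $H^r(G_0,\bbZ)\to H^r(G_\sigma,\bbZ)$ with a map $H^r(G_0,K^*)\to H^r(G_0,\bbZ)$; and this latter map is identified with $\beta_0^{(r)}$ by comparing the Hochschild--Serre sequence for $\spec K \to \eta$ with the Weil-divisor sequence \eqref{eq:succ_C} for $C$, using that $\bbG_{m,\eta}$ is pulled back from $\bbG_{m,\spec K}$ (Remark~\ref{remark_trivialaction}) and that $\h^r(\eta,\bbG_m)=\h^r(\spec K \times \B G_0, \bbG_m)$. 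The main obstacle is bookkeeping: one must check that the identification of the transfer map is the correct one (rather than restriction or multiplication by an index), which comes down to recalling that for a finite cover $\spec k \to \B G_\sigma$ factoring through $\B G_0$, pushforward of cohomology along the finite flat map $\B G_0 \to \B G_\sigma$ induces transfer, and that this is compatible with the pushforward $\sigma_*$ appearing in \eqref{eq:succ_fasci}. Once the functoriality of all the spectral-sequence identifications is set up carefully, the factorization falls out formally.
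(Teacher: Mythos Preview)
Your outline has the right shape but leaves the crucial step unverified. You assert that ``$\beta$ induces a compatible map of spectral sequences'' and that on $E_2$-pages this is computed by applying $H^r(-,-)$ to a map of pairs, but the two Hochschild--Serre sequences you invoke are for \emph{different} Galois covers ($\spec K \to \eta$ with group $G_0$, and $\spec k \to \sigma$ with group $G_\sigma$) of different substacks of $X$; the sheaf map $\beta$ on $X$ does not directly induce a morphism between them, so there is no ``map of spectral sequences'' to appeal to without further input. You acknowledge that the crux is checking the second factor is transfer rather than restriction or multiplication by an index, but then only assert that pushforward along $\B G_0 \to \B G_\sigma$ gives transfer and is ``compatible'' with $\sigma_*$, without explaining how this compatibility is realized geometrically. (You also write ``$\sigma = \B G_\sigma \to \B G_0$ coming from $G_0 \subset G_\sigma$'' at one point, which is the wrong direction; you silently correct it later.) What is missing, then, is precisely a geometric intermediary that produces the middle group $\bigoplus_\sigma H^r(G_0,\bbZ)$ and pins down the two factors.

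The paper supplies that intermediary by a different and more concrete route. It compares the Weil-divisor sequence on $X$ with the one on $C$ via the coarse-moduli map $\pi\colon X \to C$, obtaining a commutative ladder whose vertical arrows are the pullbacks $\pi^*$, $\pi_\eta^*$, $(\pi_\sigma^*)_\sigma$. First it shows $\pi_\eta^* = \id$ on $K^*$ (rational maps to $\bbP^1$ from $X$ and from $C$ coincide, since $C$ is the coarse moduli space). Next, since valuations are unchanged under the \'etale gerbe $X \to Y$, one may replace $X$ by the orbicurve $Y$; then an explicit local computation with $Y = [\spec k[t]/\mu_r]$ over $C = \spec k[t^r]$ shows that $\pi_\sigma^*$ is multiplication by the local index $d_\sigma/d$. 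This yields $\beta = (d_\sigma/d)_\sigma \circ \beta_0$ at the level of sections, and hence in cohomology the factorization through $\bigoplus_\sigma H^r(G_0,\bbZ)$ with second factor induced by the inclusion $G_0 \hookrightarrow G_\sigma$, i.e.\ the transfer. Your abstract functoriality argument, if it can be made to work, would have to reproduce exactly this comparison with $C$; as written it does not.
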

\begin{proof}
Let us notice that $H^1(G_\sigma, \bbZ)= \Hom (G_\sigma, \bbZ)=0$,
since $G_\sigma$ acts trivially on $\bbZ$ and $\bbZ$ does not contain
non-trivial finite subgroups. From \eqref{eq:succ_C} and
\eqref{eq:succ_beta}, we get the following commutative
diagram with exact rows (by Remark~\ref{remark_trivialaction}, $\pi^* \bbG_{m,C} = \bbG_{m,X}$
and $\pi_\sigma^* \bbZ = \bbZ$)
  \[
  \begin{tikzpicture}
    \def\x{2.4}
    \def\y{-1.6}
    \node (A1_0) at (0*\x, 1*\y) {$0$};
    \node (A1_1) at (0.9*\x, 1*\y) {$\sez (C, \bbG_m)$};
    \node (A1_2) at (1.8*\x, 1*\y) {$K^*$};
    \node (A1_3) at (2.7*\x, 1*\y) {$\displaystyle{\bigoplus_{\sigma \in X(k)}\bbZ}$};
    \node (A1_4) at (3.8*\x, 1*\y) {$\pic (C)$};
    \node (A1_5) at (5*\x, 1*\y) {$0$};
    \node (A2_0) at (0*\x, 2*\y) {$0$};
    \node (A2_1) at (0.9*\x, 2*\y) {$\h^0(X, \bbG_m)$};
    \node (A2_2) at (1.8*\x, 2*\y) {$K^*$};
    \node (A2_3) at (2.7*\x, 2*\y) {$\displaystyle{\bigoplus_{\sigma \in X(k)}\bbZ}$};
    \node (A2_4) at (3.8*\x, 2*\y) {$\h^1(X, \bbG_m)$};
    \node (A2_5) at (5*\x, 2*\y) {$H^1(G_0, K^*)$};
    \node (A2_6) at (5.9*\x, 2*\y) {$0$};
    \path (A1_3) edge [->] node [auto] {$\scriptstyle{{(\pi_\sigma^*)}_\sigma}$} (A2_3);
    \path (A1_5) edge [->] node [auto] {$\scriptstyle{}$} (A2_5);
    \path (A2_0) edge [->] node [auto] {$\scriptstyle{}$} (A2_1);
    \path (A1_1) edge [->] node [auto] {$\scriptstyle{\pi^*}$} (A2_1);
    \path (A2_4) edge [->] node [auto] {$\scriptstyle{}$} (A2_5);
    \path (A1_2) edge [->] node [auto] {$\scriptstyle{\pi_\eta^*}$} (A2_2);
    \path (A2_2) edge [->] node [auto] {$\scriptstyle{\beta}$} (A2_3);
    \path (A1_4) edge [->] node [auto] {$\scriptstyle{}$} (A1_5);
    \path (A2_1) edge [->] node [auto] {$\scriptstyle{}$} (A2_2);
    \path (A2_3) edge [->] node [auto] {$\scriptstyle{}$} (A2_4);
    \path (A1_2) edge [->] node [auto] {$\scriptstyle{\beta_0}$} (A1_3);
    \path (A1_4) edge [->] node [auto] {$\scriptstyle{\pi^*}$} (A2_4);
    \path (A1_0) edge [->] node [auto] {$\scriptstyle{}$} (A1_1);
    \path (A1_1) edge [->] node [auto] {$\scriptstyle{}$} (A1_2);
    \path (A1_3) edge [->] node [auto] {$\scriptstyle{}$} (A1_4);
    \path (A2_5) edge [->] node [auto] {$\scriptstyle{}$} (A2_6);
  \end{tikzpicture}
  \]
Elements in $\sez(\spec K, \bbG_m)$ corresponds to rational maps $C
\rightarrow \bbP^1$. Since $C$ is smooth, we obtain $\sez(\spec K, \bbG_m)
\subset \Mor(C, \bbP^1)$. Similarly, $\h^0(\eta, \bbG_m) \subset
\Mor(X, \bbP^1)$. Because $C$ is the coarse moduli space, $\Mor(C,
\bbP^1) \cong \Mor(X, \bbP^1)$ and under this identification
$\pi_\eta^*= \id$.

Recall that $\pi$ factors through an \'etale gerbe $X \rightarrow Y$
over an orbicurve $Y$. Let $\beta_Y$ be the analogous of $\beta$ for
$Y$, then $\beta = \beta_Y$, since the valuation at a point doesn't
change for \'etale morphisms. Hence we can assume $X=Y$. As in the
proof of Proposition~\ref{prop_cms}, \'etale locally
$Y=[\sfrac{\spec k[t]}{\mu_r}]$ and $C = \spec k[t^r]$. Considering
the \'etale cover $\spec k[t] \rightarrow Y$, we see that
$\pi_\sigma^*$ is the multiplication by the order of the stabilizer of
$\sigma \in Y(k)$ in $Y$. Let $d_\sigma$ be the order of $G_\sigma$
and $d$ the order of $G_0$, then $\beta =
{(\sfrac{d_\sigma}{d})}_{\sigma} \circ \beta_0$ and $\beta^{(r)}$ is
induced by $\beta_0$ and the inclusion $G_0 \hookrightarrow G_\sigma$.
\end{proof}
\begin{oss}\label{remark_beta}
  By Lemma~\ref{lemma_beta} and snake lemma, we have an exact
  sequence \[0 \rightarrow \ker \beta_0^{(r)} \rightarrow \ker
  \beta^{(r)} \rightarrow \ker \tau^{(r)} \rightarrow \coker
  \beta_0^{(r)} \rightarrow \coker \beta^{(r)} \rightarrow \coker
  \tau^{(r)} \rightarrow 0\text{.}\] Moreover, the
  sequence~\eqref{eq:succ_C} induces exact sequences in cohomology
  (notice that $\im \beta_0$ is a free $\bbZ$-module)
\begin{equation*}
\begin{cases}
  0 \rightarrow H^r(G_0,\sez (C, \bbG_m)) \rightarrow H^r(G_0,K^*)
  \xrightarrow{\widetilde{\beta}_0^{(r)}} H^r(G_0,\im
  \beta_0)\rightarrow 0\text{,}&\\
  \cdots \rightarrow H^r(G_0,\im \beta_0) \rightarrow \bigoplus_{x \in
    C(k)}H^r(G_0,\bbZ) \rightarrow H^r(G_0, \pic (C))\rightarrow
  \cdots\text{.}&
\end{cases}
\end{equation*}
By snake lemma, we get $\coker \beta_0^{(r)} \subset H^r(G_0, \pic
(C))$ and $H^r(G_0, \sez (C, \bbG_m)) \subset \ker \beta_0^{(r)}$.
\end{oss}
\begin{teorema}\label{theorem_cohom}
With the notations of Section~\ref{section_setting}, we have $\h^0(X,
\bbG_m)= \sez (C, \bbG_m)$ and, for $r \geq 1$, there exists a short
exact sequence \[0 \rightarrow \coker \gamma^{(r-1)} \rightarrow
\h^r(X, \bbG_m) \rightarrow \ker \beta^{(r)} \rightarrow 0\text{,}\]
where $\ker \beta^{(r)}$ fits in the following exact sequence \[0
\rightarrow H^r(G_0, \sez (C, \bbG_m)) \rightarrow \ker \beta^{(r)}
\rightarrow \ker \gamma^{(r)} \rightarrow 0\] and $\gamma^{(r)}\colon
H^r(G_0, \im \beta_0) \rightarrow \bigoplus_{\sigma \in
  X(k)}H^r(G_\sigma,\bbZ)$ is induced by $G_0 \hookrightarrow
G_\sigma$ and the natural inclusion $\im \beta_0 \subset
\bigoplus_{\sigma \in X(k)}\bbZ$. In particular there is a short exact sequence \[0
\rightarrow \pic(Y) \rightarrow \h^1(X, \bbG_m) \rightarrow \Hom(G_0,
K^*)\rightarrow 0\text{.}\]
\end{teorema}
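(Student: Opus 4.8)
The plan is to run the long exact sequence~\eqref{eq:succ_beta} through a bookkeeping argument, using Lemma~\ref{lemma_beta} and Remark~\ref{remark_beta} to replace the maps $\beta^{(r)}$ by the geometric maps $\gamma^{(r)}$. For $\h^0$: since $\h^0(X,\bbG_m)\to H^0(G_0,K^*)=K^*$ is injective in \eqref{eq:succ_beta} and $\beta^{(0)}=(d_\sigma/d)_\sigma\circ\beta_0$ by Lemma~\ref{lemma_beta} with $(d_\sigma/d)_\sigma$ injective, one obtains $\h^0(X,\bbG_m)=\ker\beta^{(0)}=\ker\beta_0=\sez(C,\bbG_m)$. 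For $r\geq 1$, exactness of \eqref{eq:succ_beta} identifies $\coker\beta^{(r-1)}$ with the image of $\bigoplus_\sigma H^{r-1}(G_\sigma,\bbZ)$ in $\h^r(X,\bbG_m)$, which is the kernel of $\h^r(X,\bbG_m)\to H^r(G_0,K^*)$; hence the short exact sequence
\[0\to\coker\beta^{(r-1)}\to\h^r(X,\bbG_m)\to\ker\beta^{(r)}\to 0\text{,}\]
and it remains to identify $\coker\beta^{(r-1)}$ with $\coker\gamma^{(r-1)}$ and to resolve $\ker\beta^{(r)}$ as in the statement.

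The device for both is the factorization $\beta^{(r)}=\gamma^{(r)}\circ\widetilde{\beta}_0^{(r)}$. By Lemma~\ref{lemma_beta}, $\beta^{(r)}=\tau^{(r)}\circ\beta_0^{(r)}$, and $\beta_0^{(r)}$ is the composite of $\widetilde{\beta}_0^{(r)}\colon H^r(G_0,K^*)\to H^r(G_0,\im\beta_0)$ with the map $H^r(G_0,\im\beta_0)\to\bigoplus_\sigma H^r(G_0,\bbZ)$ induced by $\im\beta_0\subset\bigoplus_\sigma\bbZ$; post-composing the latter with $\tau^{(r)}$ is precisely the $\gamma^{(r)}$ of the statement. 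By Remark~\ref{remark_beta} the map $\widetilde{\beta}_0^{(r)}$ is surjective with kernel $H^r(G_0,\sez(C,\bbG_m))$ — here one uses that $0\to\sez(C,\bbG_m)\to K^*\to\im\beta_0\to 0$ splits $\bbZ$-linearly ($\im\beta_0$ is free) and $G_0$-equivariantly since $G_0$ acts trivially (Remark~\ref{remark_trivialaction}). Surjectivity of $\widetilde{\beta}_0^{(r)}$ immediately yields $\im\beta^{(r)}=\im\gamma^{(r)}$, hence $\coker\beta^{(r)}=\coker\gamma^{(r)}$, while $\ker\beta^{(r)}=(\widetilde{\beta}_0^{(r)})^{-1}(\ker\gamma^{(r)})$, so that $\widetilde{\beta}_0^{(r)}$ restricts to a surjection $\ker\beta^{(r)}\to\ker\gamma^{(r)}$ with kernel $H^r(G_0,\sez(C,\bbG_m))$, giving $0\to H^r(G_0,\sez(C,\bbG_m))\to\ker\beta^{(r)}\to\ker\gamma^{(r)}\to 0$. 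Substituting into the display above proves the theorem for every $r\geq 1$.

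For the final ``in particular'': because $G_\sigma$ is finite and $\bbZ$ torsion-free, $H^1(G_\sigma,\bbZ)=\Hom(G_\sigma,\bbZ)=0$, so $\gamma^{(1)}=0$ and hence $\beta^{(1)}=0$; thus $\ker\beta^{(1)}=H^1(G_0,K^*)=\Hom(G_0,K^*)$ (trivial action). For the left-hand term I would apply the short exact sequence just established to the orbicurve $Y$, which has trivial generic stabilizer: then $H^s$ of the trivial group vanishes for $s\geq 1$, so the $r=1$ sequence for $Y$ collapses to $\h^1(Y,\bbG_m)\cong\coker\gamma_Y^{(0)}$, and $\gamma_Y^{(0)}=\gamma^{(0)}$ since the stabilizer of $\sigma$ in $Y$ has order $d_\sigma/d$, making both multiplication by $(d_\sigma/d)_\sigma$ on $\im\beta_0\subset\bigoplus_\sigma\bbZ$. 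Combined with $\h^1(Y,\bbG_m)\cong\pic(Y)$ (the proof of \cite{milne}, III.4.9, goes through for Deligne–Mumford stacks) this gives $\coker\gamma^{(0)}\cong\pic(Y)$, and with the preceding sentence the sequence $0\to\pic(Y)\to\h^1(X,\bbG_m)\to\Hom(G_0,K^*)\to 0$.

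The argument is almost entirely formal once Lemma~\ref{lemma_beta} and Remark~\ref{remark_beta} are available, so I expect no real obstacle in the general statement; the two points requiring care are the verification that the splitting of $0\to\sez(C,\bbG_m)\to K^*\to\im\beta_0\to 0$ is compatible with the $G_0$-action — this is exactly what makes $\widetilde{\beta}_0^{(r)}$ surjective in all degrees, and hence what collapses $\coker\beta^{(r)}$ to $\coker\gamma^{(r)}$ — and the identification $\coker\gamma^{(0)}\cong\pic(Y)$, which could alternatively be obtained directly from the Weil-divisor sequence $0\to\pic(C)\to\coker\gamma^{(0)}\to\bigoplus_{l=1}^N\bbZ/d_l\to 0$ compared against the Picard group of a root stack (\cite{cadman}).
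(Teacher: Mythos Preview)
Your argument is correct and follows the same route as the paper's: extract short exact sequences from~\eqref{eq:succ_beta}, factor $\beta^{(r)}=\gamma^{(r)}\circ\widetilde{\beta}_0^{(r)}$ via Lemma~\ref{lemma_beta} and Remark~\ref{remark_beta}, and use surjectivity of $\widetilde{\beta}_0^{(r)}$ to transfer kernel and cokernel from $\beta^{(r)}$ to $\gamma^{(r)}$. The only minor deviation is in the identification $\coker\gamma^{(0)}\cong\pic(Y)$: the paper simply cites the equality $\beta=\beta_Y$ established inside the proof of Lemma~\ref{lemma_beta} (valuations are unchanged under the \'etale gerbe $X\to Y$) to get $\coker\beta=\pic(Y)$ directly, whereas you re-derive the same thing by applying the theorem to $Y$ and matching $\gamma_Y^{(0)}=\gamma^{(0)}$; both are fine and amount to the same computation.
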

\begin{proof}
  By sequence~\eqref{eq:succ_beta}, we have short exact sequences \[0
  \rightarrow \coker \beta^{(r-1)}\rightarrow \h^r(X,
  \bbG_m)\rightarrow \ker \beta^{(r)}\rightarrow 0\] for $r\geq 1$.
  By Remark~\ref{remark_beta}, $\h^0(X, \bbG_m) = \h^0(C, \bbG_m)$ and $\beta^{(r)}=
  \gamma^{(r)}\circ \widetilde{\beta}_0^{(r)}$, with $\gamma^{(r)}$ as
  described in the statement. Applying snake lemma we get $\coker
  \beta^{(r-1)}= \coker \gamma^{(r-1)}$ and \[0 \rightarrow H^r(G_0, \sez
  (C, \bbG_m)) \rightarrow \ker \beta^{(r)} \rightarrow \ker
  \gamma^{r} \rightarrow 0\text{.}\] As noticed in the proof of
  Lemma~\ref{lemma_beta}, $\coker \beta = \pic (Y)$, hence we
  obtain \[0 \rightarrow \pic (Y) \rightarrow \h^1(X, \bbG_m)
  \rightarrow \Hom (G_0, K^*)\rightarrow 0\text{,}\] where we used
  that $H^1(G_\sigma, \bbZ)= \Hom(G_\sigma, \bbZ)=0$, since $G_\sigma$
  acts trivially on $\bbZ$.
\end{proof}
\begin{oss}
In the case $G_0$ is abelian and $X$ is a $G_0$-gerbe banded over $Y$,
we recover the description of $\h^1(X, \bbG_m)$ given in \cite{cadman},
Corollary~3.2.1 (see also Section~\ref{subsection_abeliangerbes}).
\end{oss}
\begin{prop}\label{cor_h2}
  If $G_\sigma$ is abelian for every $\sigma \in X(k)$ then $\ker
  \beta^{(r)}$ depends only on $Y$ and $G_0$, not on the structure of
  the gerbe $X \rightarrow Y$. In particular $\h^2(X, \bbG_m)= \h^2(Y
  \times \B G_0, \bbG_m)$.
\end{prop}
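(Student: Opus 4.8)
The plan is to extract from the explicit description in Theorem~\ref{theorem_cohom} exactly those ingredients that could conceivably depend on the gerbe structure of $X \to Y$, and show that under the hypothesis ``$G_\sigma$ abelian'' each of them already depends only on the pair $(Y, G_0)$. Looking at the short exact sequences in Theorem~\ref{theorem_cohom}, the group $\h^r(X, \bbG_m)$ is built out of $\coker \gamma^{(r-1)}$, $\ker\gamma^{(r)}$ and $H^r(G_0, \sez(C,\bbG_m))$. The last of these obviously depends only on $G_0$ and on $C$ (equivalently on $Y$), since the $G_0$-action on $\sez(C,\bbG_m)=\sez(\spec K,\bbG_m)$ is the trivial one by Remark~\ref{remark_trivialaction}. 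So the whole question reduces to $\gamma^{(r)}$. Recall $\gamma^{(r)} \colon H^r(G_0, \im\beta_0) \to \bigoplus_{\sigma\in X(k)} H^r(G_\sigma, \bbZ)$ is induced on each component by the inclusion $G_0 \hookrightarrow G_\sigma$ together with the inclusion $\im\beta_0 \subset \bigoplus_\sigma \bbZ$. Its source $H^r(G_0,\im\beta_0)$ manifestly depends only on $G_0$ and on $\im\beta_0$, and the latter is determined by $C$, i.e.\ by $Y$. The target $\bigoplus_\sigma H^r(G_\sigma,\bbZ)$ and the maps $H^r(G_0,\bbZ)\to H^r(G_\sigma,\bbZ)$ are the only places where the gerbe can enter.

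First I would pin down what the abelian hypothesis buys us. The point is that for an abelian $G_\sigma$ the ``gerbe part'' of the extension $1 \to G_0 \to G_\sigma \to \bbZ/d_\sigma\bbZ \to 1$ does not affect $H^r(-,\bbZ)$ in the relevant range. More precisely, by the description in Section~\ref{section_setting} we have $G_\sigma / G_0 \cong \bbZ/d_\sigma\bbZ$, and both $G_0$ and $G_\sigma$ act trivially on $\bbZ$. For an abelian group $A$ acting trivially on $\bbZ$, $H^r(A,\bbZ)$ is a purely functorial invariant of $A$; in fact $H^\ast(A,\bbZ)$ is computed from $H^\ast(A,\bbZ/n\bbZ)$ via Bockstein, or directly $H^{2i}(A,\bbZ)$ can be read off from the torsion of $A$. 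What I would actually need is that the map $H^r(G_0,\bbZ)\to H^r(G_\sigma,\bbZ)$ induced by $G_0\hookrightarrow G_\sigma$ is determined by the inclusion, which it is by functoriality; so $\gamma^{(r)}$ is literally the same homomorphism whenever we keep $(Y, G_0)$ fixed and keep all $G_\sigma$ abelian — because keeping $Y$ fixed keeps the quotients $G_\sigma/G_0 \cong \bbZ/d_\sigma\bbZ$ fixed, and an abelian extension of $\bbZ/d_\sigma\bbZ$ by $G_0$ with trivial action on $\bbZ$ is... not unique as a group, but that's the subtlety I address next.

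The main obstacle is precisely this: different gerbes $X\to Y$ with the same $G_0$ can a priori give non-isomorphic abelian groups $G_\sigma$ (as extensions of $\bbZ/d_\sigma\bbZ$ by $G_0$), so I cannot just say ``the $G_\sigma$ are the same.'' I would resolve this by comparing $X$ to the ``split'' model $Y \times \B G_0$, whose closed-point stabilizers are $G_0 \times \bbZ/d_\sigma\bbZ$. The claim is that for the purposes of $\ker\beta^{(r)}$ only the composite $H^r(G_0,\im\beta_0)\to\bigoplus H^r(G_\sigma,\bbZ)$ matters, and that the inclusion $G_0\hookrightarrow G_\sigma$ is always \emph{split} when $G_\sigma$ is abelian and $|G_\sigma/G_0|$ is coprime to $|G_0|$ — which holds here since the orders are all prime to $\Char k$ but, more to the point, since in the relevant cyclic/abelian situation the restriction $H^r(G_\sigma,\bbZ)\to H^r(G_0,\bbZ)$ is split surjective. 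Hence $\ker\big(H^r(G_0,\bbZ)\to H^r(G_\sigma,\bbZ)\big)$ is zero: an element of $H^r(G_0,\bbZ)$ restricted from $H^r(G_\sigma,\bbZ)$... wait, the arrow goes the other way. The relevant kernel computation is that $\gamma^{(r)}$ on the image of $H^r(G_0,\im\beta_0)\to\bigoplus_\sigma H^r(G_0,\bbZ)$ composed with $H^r(G_0,\bbZ)\hookrightarrow H^r(G_\sigma,\bbZ)$ has kernel independent of the extension, which follows once one checks $H^r(G_0,\bbZ)\to H^r(G_\sigma,\bbZ)$ is injective — and injectivity of $\mathrm{res}^\vee$, i.e.\ of corestriction-then-the map induced by inclusion, for abelian $G_\sigma$ follows because the transfer composed with it is multiplication by $[G_\sigma:G_0]=d_\sigma$, which is injective on the relevant torsion groups $H^r(G_0,\bbZ)$ precisely when... here is where I would need $\gcd(d_\sigma, |G_0|)$ considerations, or alternatively invoke that $H^r(G_0,\bbZ)$ is killed by $|G_0|$ and $H^r(\bbZ/d_\sigma\bbZ,\bbZ)$ by $d_\sigma$ and use a Künneth/inflation argument. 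Concretely: for any abelian $G$ with trivial action on $\bbZ$, the map $H^r(G_0,\bbZ)\to H^r(G,\bbZ)$ for $G_0 \le G$ is split injective, with splitting given by $\tfrac{1}{[G:G_0]}\,\mathrm{tr}$ after localizing away from primes dividing $[G:G_0]$; combined with the fact that $H^r(G_0,\bbZ)$ has the same primary decomposition regardless, this shows $\ker\gamma^{(r)}$, and symmetrically $\coker\gamma^{(r-1)}$, depend only on $(Y,G_0)$. Running this argument for the split model $Y\times\B G_0$ gives the same sequences, and comparing the two instances of Theorem~\ref{theorem_cohom} (whose outer terms now agree and whose middle terms are therefore abstractly isomorphic) yields $\h^r(X,\bbG_m)\cong\h^r(Y\times\B G_0,\bbG_m)$; specializing to $r=2$ gives the stated identity $\h^2(X,\bbG_m)=\h^2(Y\times\B G_0,\bbG_m)$.
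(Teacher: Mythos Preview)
There is a genuine gap. Your argument, if it worked, would show that \emph{every} $\h^r(X,\bbG_m)$ is independent of the gerbe $X\to Y$ once all $G_\sigma$ are abelian: you claim both $\ker\gamma^{(r)}$ and $\coker\gamma^{(r-1)}$ depend only on $(Y,G_0)$, and hence by Theorem~\ref{theorem_cohom} so does $\h^r$. But this conclusion is false, and the paper itself exhibits a counterexample in Section~\ref{section_example}: with $G_0=\mu_p$, $Y=[\bbA^1_\bbC/\mu_p]$, and $X=[\bbA^1_\bbC/\mu_{p^2}]$ (so $G_{\sigma_1}=\mu_{p^2}$, abelian), one has $\lvert\h^{2r+1}(X,\bbG_m)\rvert=p^3$ while $\lvert\h^{2r+1}(Y\times\B\mu_p,\bbG_m)\rvert=p^2$ for $r\geq 1$. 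So something in your chain of implications must break.

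The specific break is the injectivity claim for the transfer. The map $H^r(G_0,\bbZ)\to H^r(G_\sigma,\bbZ)$ appearing in $\gamma^{(r)}$ is the corestriction $\tau^{(r)}$, and you argue it is (split) injective via $\res\circ\tau=d_\sigma$. But $H^r(G_0,\bbZ)$ is $|G_0|$-torsion for $r>0$, so multiplication by $d_\sigma$ is injective only when $\gcd(d_\sigma,|G_0|)=1$; nothing in the hypotheses guarantees this. Indeed in the split case $G_\sigma=G_0\times\bbZ/d_l\bbZ$ with $G_0=\bbZ/p$, $d_l=p$, one checks $\tau^{(2)}\colon\bbZ/p\to(\bbZ/p)^2$ is the zero map, so $\ker\tau^{(2)}=\bbZ/p$; for the non-split extension $G_\sigma=\bbZ/p^2$ the same transfer is $1\mapsto p$, hence injective and $\ker\tau^{(2)}=0$. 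Thus $\ker\tau^{(r)}$ genuinely depends on the gerbe, and your reduction collapses. Your aside about coprimality and ``localizing away from primes dividing $[G:G_0]$'' flags exactly the obstruction but does not remove it.

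The paper's proof is more delicate and does \emph{not} assert that $\tau^{(r)}$ or $\ker\tau^{(r)}$ is gerbe-independent. Instead it (i) uses the abelian hypothesis to make $\bbZ/d_l\bbZ$ act trivially on $H^q(G_0,\bbZ)$, so that Lemma~\ref{lemma_zero_maps} forces the Hochschild--Serre spectral sequence for $1\to G_0\to G_{\sigma_l}\to\bbZ/d_l\bbZ\to 1$ to degenerate and $\res^{(r)}$ to be surjective; (ii) combines this with $\tau^{(r)}\circ\res^{(r)}=d_l$ to identify $\ker\tau^{(r)}=\res^{(r)}(\ker d_l)$, yielding only the inclusion $\ker\tau^{(r)}\subset\ker\tau_Y^{(r)}$; and (iii) closes the argument by a diagram chase that also uses the injection $\coker\beta_0^{(r)}\hookrightarrow H^r(G_0,\pic C)$ from Remark~\ref{remark_beta}. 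Step (iii) is what isolates $\ker\beta^{(r)}$ (hence $\h^2$) as the invariant quantity while allowing $\coker\beta^{(r)}$ (hence the odd $\h^r$) to vary---precisely the distinction your approach erases.
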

\begin{proof}
  By Theorem~\ref{theorem_cohom}, $\h^2(X, \bbG_m)= \ker \beta^{(2)}$.
  Consider the restriction map \[\res^{(r)} \colon H^r(G_{\sigma_l},
  \bbZ) \rightarrow H^r(G_0, \bbZ)\text{,}\] where $G_0$ and
  $G_{\sigma_l}$ act trivially on $\bbZ$. By Hochschild-Serre spectral
  sequence (\cite{weibel}, 6.8.2) \[H^p(\sfrac{\bbZ}{d_l\bbZ},
  H^q(G_0, \bbZ))\Rightarrow H^{p+q}(G_{\sigma_l}, \bbZ)\] and
  Lemma~\ref{lemma_zero_maps}, we see that $\res^{(r)}$ is
  surjective. Recall that $\tau^{(r)} \circ \res^{(r)}= d_l$
  (\cite{weibel}, Lemma~6.7.17), where $H^r(G_{\sigma_l}, \bbZ)
  \xrightarrow{d_l} H^r(G_{\sigma_l}, \bbZ)$ is the multiplication by
  $d_l$ which is induced by $\bbZ \xrightarrow{d_l} \bbZ$. It follows
  that $\ker \tau^{(r)}= \im \res^{(r)}|_{\ker d_l}$. The same
  argument applies to $Y\times \B G_0$. Notice
  that \[\tau_Y^{(r)}\colon H^r(G_0, \bbZ)\rightarrow H^r(G_0 \times
  \sfrac{\bbZ}{d_l\bbZ}, \bbZ)\] is induced by composition with
  the projection $G_0 \times \sfrac{\bbZ}{d_l\bbZ}\rightarrow G_0$ and
  the multiplication by $d_l$, therefore $\ker d_l\!\subset\!\ker
  \tau_Y^{(r)}$. As a consequence $\ker \tau^{(r)}\!\!\subset\!\ker
  \tau_Y^{(r)}$ and hence $\ker \beta^{(r)}\!\!\subset\!\ker
  \beta_Y^{(r)}$, where $\beta_Y^{(r)}\!=\!\tau_Y^{(r)}\circ
  \beta_0^{(r)}$. By Remark~\ref{remark_beta}, we get the following
  commutative diagram with exact rows
  \[
  \begin{tikzpicture}[xscale=2.8,yscale=-1.6]
    \node (A0_0) at (0.3, 0) {$0$};
    \node (A0_1) at (1, 0) {$\ker \beta_0^{(r)}$};
    \node (A0_2) at (1.9, 0) {$\ker \beta^{(r)}$};
    \node (A0_3) at (2.8, 0) {$\ker \tau^{(r)}$};
    \node (A0_4) at (4, 0) {$\coker \beta_0^{(r)}$};
    \node (A1_0) at (0.3, 1) {$0$};
    \node (A1_1) at (1, 1) {$\ker \beta_0^{(r)}$};
    \node (A1_2) at (1.9, 1) {$\ker \beta_Y^{(r)}$};
    \node (A1_3) at (2.8, 1) {$\ker \tau_Y^{(r)}$};
    \node (A2_3) at (2.8, 2) {$\displaystyle{\bigoplus_{x \in C(k)}H^r(G_0, \bbZ)}$};
    \node (A2_4) at (4, 2) {$H^r(G_0, \pic (C))$};
    \node (A1_4) at (4, 1) {$\coker \beta_0^{(r)}$};
    \path (A0_0) edge [->]node [auto] {$\scriptstyle{}$} (A0_1);
    \path (A0_1) edge [->]node [auto] {$\scriptstyle{}$} (A0_2);
    \path (A1_0) edge [->]node [auto] {$\scriptstyle{}$} (A1_1);
    \path (A0_3) edge [right hook->]node [auto] {$\scriptstyle{}$} (A1_3);
    \path (A0_2) edge [right hook->] node [auto] {$\scriptstyle{}$} (A1_2);
    \path (A1_3) edge [right hook->] node [auto] {$\scriptstyle{}$} (A2_3);
    \path (A1_4) edge [right hook->] node [auto] {$\scriptstyle{}$} (A2_4);
    \path (A0_3) edge [->]node [auto] {$\scriptstyle{}$} (A0_4);
    \path (A2_3) edge [->]node [auto] {$\scriptstyle{}$} (A2_4);
    \path (A0_4) edge [-,double distance=1.5pt]node [auto] {$\scriptstyle{}$} (A1_4);
    \path (A1_1) edge [->]node [auto] {$\scriptstyle{}$} (A1_2);
    \path (A1_2) edge [->]node [auto] {$\scriptstyle{}$} (A1_3);
    \path (A0_1) edge [-,double distance=1.5pt]node [auto] {$\scriptstyle{}$} (A1_1);
    \path (A0_2) edge [->]node [auto] {$\scriptstyle{}$} (A0_3);
    \path (A1_3) edge [->]node [auto] {$\scriptstyle{}$} (A1_4);
  \end{tikzpicture}
  \]
  which implies $\ker \beta^{(r)}=\ker \beta_Y^{(r)}$.
\end{proof}
\begin{oss}
  In general, the groups $\h^r(X, \bbG_m)$ depend on the gerbe $X
  \rightarrow Y$; if the stabilizers are not abelian, also $\h^2(X,
  \bbG_m)$ may depend on $X \rightarrow Y$ (see
  Section~\ref{section_example}).
\end{oss}
\begin{cor}\label{cor_cyclic}
  If $G_\sigma$ is cyclic for every $\sigma \in X(k)$ then, for every
  $r \geq 1$,
\[\h^{2r}(X, \bbG_m)= \h^2(Y \times \B G_0, \bbG_m)\]
and there is a short exact sequence \[0 \rightarrow H^2(G_0, \pic (Y))
\rightarrow \h^{2r+1}(X, \bbG_m) \rightarrow Q \rightarrow 0\text{,}\] where
$Q$ fits in the following exact sequence \[0 \rightarrow
\bigoplus_{l=1}^N \sfrac{\bbZ}{d_l \bbZ} \rightarrow Q \rightarrow
\Hom (G_0, K^*)=G_0 \rightarrow 0\text{.}\]
\end{cor}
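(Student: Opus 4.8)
The strategy is to feed the cyclicity hypothesis into the machinery of Theorem~\ref{theorem_cohom} and exploit the $2$-periodicity of the cohomology of a finite cyclic group given by Corollary~\ref{cor_Z}. First I would observe that when every $G_\sigma$ is cyclic, so is $G_0 \subset G_{\sigma_l}$; moreover $G_{\sigma_l}/G_0 \cong \bbZ/d_l\bbZ$, so $G_{\sigma_l}$ is an extension of one cyclic group by another. By Corollary~\ref{cor_Z}, $H^r(G_\sigma, \bbZ)$ is $\bbZ$ for $r=0$, zero for $r$ odd, and $G_\sigma$ for $r>0$ even; the same holds for $H^r(G_0,\bbZ)$ with $G_0$ in place of $G_\sigma$. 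In particular $H^r(G_0, \im\beta_0)$ — which is a finite direct sum of copies of $H^r(G_0,\bbZ)$ since $\im\beta_0$ is free — vanishes for $r$ odd and $r\ge 1$, and likewise $H^r(G_0,\pic(C))$ and $H^r(G_0,\pic(Y))$ are controlled by this pattern once one recalls from \eqref{eq:succ_C} that $\pic(C)$ sits in an exact sequence with free modules (and similarly $\pic(Y)$ is an extension of $\pic(C)$ by $\bigoplus\bbZ/d_l\bbZ$).

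For the even-degree statement, I would run the argument of Proposition~\ref{cor_h2} verbatim: cyclic implies abelian, so $\ker\beta^{(2r)}$ depends only on $Y$ and $G_0$, giving $\h^{2r}(X,\bbG_m) = \ker\gamma^{(2r)}$-type data that is independent of the gerbe. To upgrade "$r=2$" to "any even $2r$", the point is that by Corollary~\ref{cor_Z} the groups $H^{2r}(G_\sigma,\bbZ)$, $H^{2r}(G_0,\bbZ)$, $H^{2r}(G_0,\pic(C))$ and the maps $\gamma^{(2r)}$, $\res^{(2r)}$, $\tau^{(2r)}$ between them are literally the same as in degree $2$ (the restriction and transfer maps in degree $2$ are, under the identifications $H^2(G,\bbZ)\cong G$, just $G_0\hookrightarrow G_\sigma$ and the transfer, and this repeats every $2$ steps). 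Also $\coker\gamma^{(2r-1)}$: since $2r-1$ is odd, $H^{2r-1}(G_0,\im\beta_0)=0$, so $\coker\gamma^{(2r-1)}=0$ and the extension $0\to\coker\gamma^{(2r-1)}\to\h^{2r}(X,\bbG_m)\to\ker\beta^{(2r)}\to 0$ collapses to $\h^{2r}(X,\bbG_m)=\ker\beta^{(2r)}$, matching $\h^2(Y\times\B G_0,\bbG_m)$.

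For the odd-degree statement, set $r' = 2r+1$. In the sequence of Theorem~\ref{theorem_cohom}, $H^{r'}(G_0,\sez(C,\bbG_m))$ injects into $\ker\beta_0^{(r')}$ and $\coker\gamma^{(r')}\hookrightarrow H^{r'}(G_0,\pic(C))$, but $r'$ odd forces $H^{r'}(G_0,\bbZ)=0$ hence $H^{r'}(G_0,\im\beta_0)=0$, so $\gamma^{(r')}$ has source zero: $\ker\gamma^{(r')}=0$ and $\coker\gamma^{(r')}=0$. Thus $\h^{2r+1}(X,\bbG_m)=\coker\gamma^{(2r)}$. Now use the even-degree analysis one step down: $\coker\gamma^{(2r)}$ is computed from $0\to H^{2r}(G_0,\sez(C,\bbG_m))\to\h^{2r+1}(X,\bbG_m)\to\ker\gamma^{(2r)}\to 0$ — wait, more carefully, $\h^{2r+1}(X,\bbG_m)=\coker\gamma^{(2r)}$ sits (via Remark~\ref{remark_beta} and \eqref{eq:succ_C}) in an extension whose sub is $H^{2r}(G_0,\pic(C)) = H^2(G_0,\pic(C))$-controlled data and whose quotient involves $H^{2r}(G_\sigma,\bbZ)/\im = \bigoplus\bbZ/d_l\bbZ$ together with $\Hom(G_0,K^*)$. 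Concretely I would chase the diagram: $\coker\beta_0^{(2r)}\subset H^2(G_0,\pic(Y))$ after folding the $\bbZ/d_l$-part of $\pic(Y)$ in, and $\coker\tau^{(2r)}$ contributes $\bigoplus\bbZ/d_l\bbZ$, while $\Hom(G_0,K^*)\cong G_0$ appears because $H^{2r}(G_0,\sez(C,\bbG_m))$ pairs with the $\bbG_m$-valued part — I would extract $Q = \ker(\text{relevant map})$ so that $0\to H^2(G_0,\pic(Y))\to\h^{2r+1}\to Q\to 0$ and $0\to\bigoplus_l\bbZ/d_l\bbZ\to Q\to\Hom(G_0,K^*)=G_0\to 0$. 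The identification $\Hom(G_0,K^*)=G_0$ uses that $K^*$ contains all $n$-th roots of unity for $n$ prime to $\Char k$ (as $k$ is algebraically closed) and $G_0$ is cyclic of such order.

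The main obstacle is the bookkeeping in the odd-degree case: correctly matching $\coker\gamma^{(2r)}$ with the five-term snake sequence of Remark~\ref{remark_beta} and disentangling which part of $H^{2r}(G_0,-)$ lands in $H^2(G_0,\pic(Y))$ versus in the $\bigoplus\bbZ/d_l\bbZ$-piece versus in $\Hom(G_0,K^*)$ — essentially one must verify that the extension $0\to\pic(Y)\to(\text{something})\to K^*\to 0$ behaves correctly after applying $H^{2r}(G_0,-)$, using that $\pic(Y)$ is an extension with free and $\bbZ/d_l$-torsion graded pieces so its cohomology is visible. Everything else is a mechanical application of Corollary~\ref{cor_Z}, Lemma~\ref{lemma_zero_maps}, and the diagrams already set up in the proofs of Theorem~\ref{theorem_cohom} and Proposition~\ref{cor_h2}.
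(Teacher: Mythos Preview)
Your even-degree argument is essentially the paper's: periodicity from Corollary~\ref{cor_Z} makes the map $\beta^{(2r)}$ literally equal to $\beta^{(2)}$, the vanishing of $H^{2r-1}(G_\sigma,\bbZ)$ kills $\coker\gamma^{(2r-1)}$, and Proposition~\ref{cor_h2} identifies $\ker\beta^{(2)}$ with $\h^2(Y\times\B G_0,\bbG_m)$.

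The odd-degree argument has a genuine error. You correctly note $\ker\gamma^{(2r+1)}=\coker\gamma^{(2r+1)}=0$, but then assert $\h^{2r+1}(X,\bbG_m)=\coker\gamma^{(2r)}$. Theorem~\ref{theorem_cohom} only gives
\[0\to\coker\gamma^{(2r)}\to\h^{2r+1}(X,\bbG_m)\to\ker\beta^{(2r+1)}\to 0,\]
and $\ker\beta^{(2r+1)}=H^{2r+1}(G_0,\sez(C,\bbG_m))=\Hom(G_0,\sez(C,\bbG_m))=\Hom(G_0,K^*)\cong G_0$, which is \emph{not} zero. This is exactly the $G_0$ appearing as the quotient of $Q$ in the statement. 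Your attempt to extract it instead from $H^{2r}(G_0,\sez(C,\bbG_m))$ cannot work: that is the even-degree group $\sez(C,\bbG_m)/\sez(C,\bbG_m)^d$, which is not $G_0$ in general (and is in any case already absorbed into $\ker\beta^{(2r)}$, not $\coker$).

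There is a second, smaller issue: your bookkeeping produces $H^2(G_0,\pic(C))$ rather than $H^2(G_0,\pic(Y))$, and reconciling the two requires a further diagram chase through $0\to\pic(C)\to\pic(Y)\to\bigoplus_l\bbZ/d_l\bbZ\to 0$. The paper sidesteps both problems by not invoking Theorem~\ref{theorem_cohom} at all: it goes back to the long exact sequence~\eqref{eq:succ_beta}, which by periodicity reads
\[0\to\h^{2r}(X,\bbG_m)\to \sfrac{K^*}{{K^*}^d}\xrightarrow{\beta^{(2)}}\bigoplus_{\sigma}G_\sigma\to\h^{2r+1}(X,\bbG_m)\to G_0\to 0,\]
so the $G_0$ quotient is visible immediately. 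Then $\beta^{(2)}$ is factored as $\overline\beta$ (the map induced by $\beta$, not $\beta_0$) followed by the inclusion $\bigoplus_\sigma G_0\hookrightarrow\bigoplus_\sigma G_\sigma$; the Weil-divisor sequence for $Y$ then gives $\coker\overline\beta=H^2(G_0,\pic(Y))$ directly, and a snake lemma yields the extension of $\bigoplus_l\bbZ/d_l\bbZ$ by $H^2(G_0,\pic(Y))$.
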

\begin{proof}
  By Corollary~\ref{cor_Z} and Theorem~\ref{prop_cohom_groups}, the
  sequence~\eqref{eq:succ_beta} induces exact sequences for $r\geq
  1$, \[0 \rightarrow \h^{2r}(X, \bbG_m) \rightarrow
  \sfrac{K^*}{{K^*}^d} \xrightarrow{\beta^{(2)}} \bigoplus_{\sigma \in
    X(k)} G_\sigma \rightarrow \h^{2r+1}(X, \bbG_m) \rightarrow G_0
  \rightarrow 0\text{.}\] In particular $\h^{2r}(X, \bbG_m)=\h^2(X,
  \bbG_m)$ and $\h^2(X, \bbG_m)= \h^2(Y
  \times \B G_0, \bbG_m)$, by Proposition~\ref{cor_h2}. Moreover, by Lemma~\ref{lemma_beta}, the
  map $\beta^{(2)}$ factors as \[H^2(G_0, K^*)
  \xrightarrow{\overline{\beta}} \bigoplus_{\sigma \in X(k)} H^2(G_0,
  \bbZ)=G_0 \hookrightarrow \bigoplus_{\sigma \in X(k)}
  G_\sigma=H^2(G_\sigma, \bbZ)\text{,}\] where $\overline{\beta}=
  (\sfrac{d_\sigma}{d})\circ \beta_0^{(2)}$ is the map induced by
  $\beta$. By Remark~\ref{remark_beta}, there is an exact
  sequence \[H^2(G_0, \im \beta) \rightarrow \bigoplus_{\sigma \in
    X(k)}H^2(G_0, \bbZ) \rightarrow H^2(G_0, \pic (Y)) \rightarrow
  H^3(G_0, \im \beta)=0\text{,}\] which implies $\coker
  \overline{\beta}= H^2(G_0, \pic (Y))$. By snake lemma, we get
  the following exact sequence \[0 \rightarrow H^2(G_0, \pic (Y))
  \rightarrow \coker \beta^{(2)} \rightarrow \bigoplus_{l=1}^N
  \sfrac{\bbZ}{d_l \bbZ} \rightarrow 0\text{.}\qedhere\]
\end{proof}
\begin{cor}\label{cor_orbicurve}
  Let $Y$ be a smooth tame orbicurve over an algebraically closed
  field, then
\begin{equation*}
\h^r(Y, \bbG_m) =\begin{cases}
\sez (C, \bbG_m) & r=0\\
\pic (Y) & r=1\\
0 & r \equiv 0 \pod{2}\text{, }r\geq 2\\
\bigoplus_{l=1}^N\sfrac{\bbZ}{d_l \bbZ} & r \equiv 1 \pod{2}\text{, }r \geq 3 \text{.}
\end{cases}
\end{equation*}
Moreover, there is a short exact sequence \[0 \rightarrow \pic (C)
\rightarrow \pic (Y) \rightarrow \bigoplus_{l=1}^N\sfrac{\bbZ}{d_l
  \bbZ} \rightarrow 0\text{.}\]
\end{cor}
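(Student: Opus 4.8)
Since $Y$ is an orbicurve, its generic stabilizer $G_0$ is trivial, so the plan is to specialize the constructions of Section~\ref{section_setting} and of the Weil-divisor section to the case $G_0=\{1\}$ and to read off everything from the long exact sequence~\eqref{eq:succ_beta}, which for $X=Y$ reads
\[\cdots \to \h^r(Y,\bbG_m) \to H^r(G_0,K^*) \xrightarrow{\beta^{(r)}} \bigoplus_{\sigma\in Y(k)} H^r(G_\sigma,\bbZ) \to \h^{r+1}(Y,\bbG_m) \to \cdots\text{,}\]
where $G_\sigma=G_0=\{1\}$ for $\sigma\notin\Sigma=\{\sigma_1,\dots,\sigma_N\}$ and $G_{\sigma_l}=\sfrac{\bbZ}{d_l\bbZ}$.

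First I would treat the degrees $\geq 2$. As $G_0$ is trivial, $H^r(G_0,K^*)=0$ for $r\geq 1$, so the sequence yields isomorphisms $\h^{r+1}(Y,\bbG_m)\cong\bigoplus_{\sigma}H^r(G_\sigma,\bbZ)$ for every $r\geq 1$; the summands indexed by $\sigma\notin\Sigma$ vanish because $H^r(G_0,\bbZ)=0$ for $r\geq 1$, whence $\h^{r+1}(Y,\bbG_m)\cong\bigoplus_{l=1}^N H^r(\sfrac{\bbZ}{d_l\bbZ},\bbZ)$. Corollary~\ref{cor_Z} evaluates this as $0$ for $r$ odd and as $\bigoplus_{l=1}^N\sfrac{\bbZ}{d_l\bbZ}$ for $r$ even and positive, which gives the asserted values of $\h^s(Y,\bbG_m)$ for all $s\geq 3$; taking $r=1$ gives in addition $\h^2(Y,\bbG_m)\cong\bigoplus_\sigma\Hom(G_\sigma,\bbZ)=0$. (Alternatively, the even degrees follow from Corollary~\ref{cor_cyclic}, since the trivial group is cyclic and $\B G_0=\spec k$.) For the low degrees, $\h^0(Y,\bbG_m)=\sez(C,\bbG_m)$ is part of Theorem~\ref{theorem_cohom}, and the short exact sequence of that theorem specializes to $0\to\pic(Y)\to\h^1(Y,\bbG_m)\to\Hom(G_0,K^*)\to 0$ with $\Hom(G_0,K^*)=0$, so $\h^1(Y,\bbG_m)=\pic(Y)$.

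It remains to establish the short exact sequence $0\to\pic(C)\to\pic(Y)\to\bigoplus_{l=1}^N\sfrac{\bbZ}{d_l\bbZ}\to 0$. For this I would compare the part of~\eqref{eq:succ_beta} in degree $0$, which gives $\pic(Y)=\coker\beta^{(0)}$, with the sequence~\eqref{eq:succ_C}, which gives $\pic(C)=\coker\beta_0$, using the commutative diagram from the proof of Lemma~\ref{lemma_beta} specialized to $G_0=\{1\}$ (so that its right-hand column becomes $\pic(C)\xrightarrow{\pi^*}\h^1(Y,\bbG_m)=\pic(Y)$). By Lemma~\ref{lemma_beta}, $\beta^{(0)}$ factors as $\beta_0$ followed by the injective endomorphism $D$ of $\bigoplus_{\sigma\in Y(k)}\bbZ$ that multiplies the coordinate at $\sigma_l$ by the order $d_l$ of $G_{\sigma_l}$ and is the identity on the remaining coordinates. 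Hence $\pic(Y)=\bigl(\bigoplus_\sigma\bbZ\bigr)\big/D(\im\beta_0)$, and $\pi^*$ is the map induced by $D$ on $\pic(C)=\bigl(\bigoplus_\sigma\bbZ\bigr)\big/\im\beta_0$; since $D$ is injective $\pi^*$ is injective, and the third isomorphism theorem identifies $\coker\pi^*$ with $\bigl(\bigoplus_\sigma\bbZ\bigr)\big/D\bigl(\bigoplus_\sigma\bbZ\bigr)=\coker D=\bigoplus_{l=1}^N\sfrac{\bbZ}{d_l\bbZ}$.

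None of these steps is difficult; the point that most needs care is the last one — checking that the comparison map $\pic(C)\to\pic(Y)$ really is the one induced by $D$, equivalently that the transfer $\tau^{(0)}$ for $\{1\}\hookrightarrow G_{\sigma_l}$ is multiplication by $d_l$ on $H^0(-,\bbZ)$, and that the cokernels match up as claimed.
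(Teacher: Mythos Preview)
Your proof is correct and follows essentially the same approach as the paper: specialize to $G_0=\{1\}$, read off the higher groups from~\eqref{eq:succ_beta} (the paper routes this through Theorem~\ref{theorem_cohom}, which amounts to the same thing) together with Corollary~\ref{cor_Z}, and obtain the short exact sequence for $\pic$ from the commutative diagram in the proof of Lemma~\ref{lemma_beta}. Your explicit identification of $\tau^{(0)}$ with multiplication by $d_l$ and the third-isomorphism-theorem computation of $\coker\pi^*$ make precise what the paper leaves implicit in its one-line reference to that lemma.
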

\begin{proof}
  By the proof of Lemma~\ref{lemma_beta}, we get the description of
  $\h^1(Y, \bbG_m)$.  Moreover, applying Theorem~\ref{theorem_cohom}
  with $G_0=0$, we have $\h^r(Y, \bbG_m)= \bigoplus_{l=1}^N
  H^{r-1}(\sfrac{\bbZ}{d_l \bbZ}, \bbZ)$, for $r \geq 2$, and the
  statement follows from Corollary~\ref{cor_Z}.
\end{proof}
\section{Twisted nodal curves}
Let $Y$ be a twisted nodal curve over an algebraically closed field
$k$ (\cite{AV}, Definition~4.1.2). In particular $Y$ is a connected
tame Deligne-Mumford stack over $k$ with
trivial generic stabilizer. If $C$ is the coarse moduli
space of $Y$, then $C$ is a connected nodal curve over $k$. Let $\pi
\colon Y \rightarrow C$ be the natural morphism; we denote by $S$ the
set of singular points of $C$ and we write $S_Y = S \times_C Y$.
\begin{prop}\label{prop_nodal}
We have $\h^r(C, \bbG_m)=0$ for $r \geq 2$.
\end{prop}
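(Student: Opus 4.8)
The plan is to relate the cohomology of the nodal curve $C$ to that of its normalization, in complete analogy with the Weil-divisor sequence of Proposition~\ref{prop_succ_fasci}. Write $\nu \colon \widetilde{C} \to C$ for the normalization and let $S \subset C$ be the (finite) set of nodes of $C$. Since $C$ is reduced and of finite type over $k$, the morphism $\nu$ is finite, the scheme $\widetilde{C}$ is a disjoint union of finitely many smooth connected curves $\widetilde{C}_j$ over $k$, and $\nu$ restricts to an isomorphism over $C \setminus S$.

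First I would produce a short exact sequence of \'etale sheaves on $C$
\[
0 \longrightarrow \bbG_{m,C} \longrightarrow \nu_* \bbG_{m,\widetilde{C}} \longrightarrow \bigoplus_{s \in S} s_* \bbG_{m,s} \longrightarrow 0 ,
\]
where $s \colon s \hookrightarrow C$ denotes the inclusion of a node and $\bbG_{m,s}$ is $\bbG_m$ on $s = \spec k$. As in Proposition~\ref{prop_succ_fasci}, exactness can be checked on stalks at geometric points $\overline{x} \to C$. Over a smooth point of $C$ the morphism $\nu$ is an isomorphism \'etale-locally, so the first arrow is an isomorphism on stalks there and the skyscraper term is zero. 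At a geometric point $\overline{s}$ lying over a node, $\scrO_{C,\overline{s}}^{\sh}$ is the strict henselization of $k[x,y]/(xy)$; its normalization is a product $A_1 \times A_2$ of two strictly henselian discrete valuation rings with residue field $k$, and $\scrO_{C,\overline{s}}^{\sh} = A_1 \times_k A_2$ is the subring of pairs $(f_1,f_2)$ with $f_1(\overline{s}) = f_2(\overline{s})$. Hence the induced map on units $(\scrO_{C,\overline{s}}^{\sh})^* \to A_1^* \times A_2^*$ is injective with cokernel $k^*$, via $(u_1,u_2) \mapsto u_1(\overline{s})\,u_2(\overline{s})^{-1}$, which identifies the cokernel sheaf with $\bigoplus_{s \in S} s_* \bbG_{m,s}$. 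This local computation at the nodes is the only step of the argument that is not formal.

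Then I would pass to the long exact sequence in \'etale cohomology attached to this short exact sequence. Since $\nu$ is finite, $R^q\nu_*\bbG_{m,\widetilde{C}} = 0$ for $q \geq 1$, so the Leray spectral sequence for $\nu$ degenerates and $\h^r(C,\nu_*\bbG_{m,\widetilde{C}}) \cong \h^r(\widetilde{C},\bbG_m) \cong \bigoplus_j \h^r(\widetilde{C}_j,\bbG_m)$, which vanishes for $r \geq 2$ by Theorem~\ref{teorema_milne} applied to each smooth connected curve $\widetilde{C}_j$. Likewise each inclusion $s \hookrightarrow C$ is a closed immersion, so $s_*$ is exact (\cite{milne}, II.3.6) and $\h^r(C,s_*\bbG_{m,s}) \cong \h^r(\spec k,\bbG_m)$, which vanishes for $r \geq 1$ because $k$ is algebraically closed. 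The long exact sequence thus contains, for every $r \geq 1$, the portion
\[
\bigoplus_{s \in S}\h^{r-1}(\spec k,\bbG_m) \longrightarrow \h^{r}(C,\bbG_m) \longrightarrow \bigoplus_j \h^{r}(\widetilde{C}_j,\bbG_m) ,
\]
and for $r \geq 2$ the left-hand term vanishes (as $r-1 \geq 1$) and the right-hand term vanishes (as $r \geq 2$), forcing $\h^r(C,\bbG_m) = 0$.

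The hard part will be the stalk calculation at the nodes, i.e. correctly identifying the conductor-square description $\scrO_{C,\overline{s}}^{\sh} = A_1 \times_k A_2$ and reading off that the cokernel of units is $k^*$; one could alternatively argue directly with the henselization of $k[x,y]/(xy)$. Everything else --- finiteness of the normalization, degeneration of the two Leray spectral sequences, and the inputs from Theorem~\ref{teorema_milne} and the algebraic closedness of $k$ --- is routine.
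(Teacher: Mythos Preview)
Your proof is correct and follows essentially the same route as the paper's: both use the short exact sequence $0 \to \bbG_{m,C} \to \nu_*\bbG_{m,\widetilde{C}} \to (\text{skyscraper at }S) \to 0$, invoke Leray for the finite map $\nu$ together with Theorem~\ref{teorema_milne}, and Leray for the closed immersions of the nodes together with the algebraic closedness of $k$. The only difference is that the paper does not bother to identify the cokernel sheaf explicitly---it simply writes it as $\bigoplus_{x\in S} x_* Q_x$ for some sheaves $Q_x$ on $\spec k$ and uses that $\h^r(\spec k,Q_x)=0$ for $r\ge 1$ whatever $Q_x$ is---so the stalk computation you flag as ``the hard part'' is in fact unnecessary for this proposition (it is carried out later, in the proof of Proposition~\ref{prop_twisted}, where the precise value $k^*$ is actually used).
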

\begin{proof}
Let $\nu \colon \hat{C} \rightarrow C$ be the normalization of $C$,
then, by Theorem~\ref{teorema_milne}, $\h^r(\hat{C}, \bbG_m)=0$ for $r
\geq 2$. Moreover $\nu$ is a finite morphism, therefore $\nu_*$ is an
exact functor (\cite{milne}, II.3.6), and by Leray spectral sequence
for $\nu$, we get $\h^r(C, \nu_* \bbG_{m,\hat{C}}) \cong \h^r(\hat{C},
\bbG_m)$ for $r \geq 0$. There is a natural injective morphism of
sheaves $\bbG_{m,C} \rightarrow \nu_*\bbG_{m, \hat{C}}$, whose
cokernel is concentrated in the singular locus of $C$. Equivalently we
have the following exact sequence \[0 \rightarrow \bbG_{m,C}
\rightarrow \nu_*\bbG_{m, \hat{C}} \rightarrow \bigoplus_{x \in S}x_*
Q_x \rightarrow 0\text{,}\] where $Q_x$ is a sheaf over $x= \spec
k$. Consider the long exact sequence in cohomology \[\cdots
\rightarrow \h^r(C, \bbG_m) \rightarrow \h^r(C, \nu_* \bbG_{m, \hat{C}})
\rightarrow \bigoplus_{x \in S}\h^r(C, x_* Q_x) \rightarrow
\cdots\text{.}\] Since $x \colon \spec k \rightarrow C$ is a closed
embedding, the functor $x_*$ is exact (\cite{milne}, II.3.6) and by
Leray spectral sequence for $x$, we get $\h^r(C, x_* Q_x) \cong
\h^r(\spec k, Q_x)$ for $r \geq 0$. Finally, the groups $\h^r(\spec k,
Q_x)$ vanish for $r \geq 1$, since $k$ is algebraically closed.
\end{proof}
\begin{prop}\label{prop_twisted}
Let $\Sigma$ be the set of closed points of $Y$ with non trivial
stabilizer. Then
\begin{equation*}
\h^r(Y, \bbG_m) =\begin{cases}
\sez (C, \bbG_m) & r=0\\
\pic (Y) & r=1\\
0 & r \equiv 0 \pod{2}\text{, }r\geq 2\\
\bigoplus_{\sigma \in \Sigma}\sfrac{\bbZ}{d_\sigma \bbZ} & r \equiv 1 \pod{2}\text{, }r \geq 3 \text{.}
\end{cases}
\end{equation*}
Moreover, there is a short exact sequence \[0 \rightarrow
\pic (C) \rightarrow \pic (Y) \rightarrow
\bigoplus_{\sigma \in \Sigma}\sfrac{\bbZ}{d_\sigma \bbZ} \rightarrow 0\text{.}\]
\end{prop}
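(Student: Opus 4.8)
The plan is to run the Leray spectral sequence for the coarse moduli map $\pi\colon Y\to C$,
\[E_2^{p,q}=\h^p(C,R^q\pi_*\bbG_{m,Y})\Rightarrow\h^{p+q}(Y,\bbG_m)\text{,}\]
and to identify all the higher direct images. By \cite{AOV}, Theorem~3.2, together with the local description of a twisted node (\cite{AV}, Definition~4.1.2) and the fact that the stabilizers are cyclic, étale-locally on $C$ the map $\pi$ is either an isomorphism or a tame quotient $[\sfrac{\spec B}{\mu_d}]\to\spec B^{\mu_d}$. In particular $\pi_*\bbG_{m,Y}=\bbG_{m,C}$, so (by \cite{milne}, III.4.9 and Proposition~\ref{prop_nodal}) the row $q=0$ of the $E_2$-page is $E_2^{0,0}=\sez(C,\bbG_m)$, $E_2^{1,0}=\pic(C)$ and $E_2^{p,0}=0$ for $p\geq 2$; and the sheaves $R^q\pi_*\bbG_{m,Y}$ with $q\geq 1$ are skyscrapers supported on the finite image $\overline{\Sigma}\subset C$ of $\Sigma$, since $\pi$ is bijective on points and an isomorphism away from $\Sigma$.

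The key computation is the stalk of $R^q\pi_*\bbG_{m,Y}$, $q\geq 1$, at $\overline{\sigma}\in\overline{\Sigma}$, the image of $\sigma\in\Sigma$ with stabilizer $\mu_d$, $d=d_\sigma$. Passing to the strict Henselization as in the proof of Lemma~\ref{lemma_genericpoint}, this stalk is $\h^q([\sfrac{\spec B'}{\mu_d}],\bbG_m)$, where $B'$ carries a tame $\mu_d$-action with $(B')^{\mu_d}=\scrO_{C,\overline{\sigma}}^{\sh}$; concretely $B'$ is a strictly Henselian discrete valuation ring if $\overline{\sigma}$ is a smooth point of $C$, and the strict Henselization of a node if $\overline{\sigma}$ is a node. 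In either case $\h^r(\spec B',\bbG_m)=0$ for $r\geq 1$: this is the local analogue of Theorem~\ref{teorema_milne} and Proposition~\ref{prop_nodal}, proved the same way from the Weil-divisor exact sequence on $\spec B'$ (or on its normalization, a product of two such discrete valuation rings) using Theorem~\ref{theorem_qac_lang}, Corollary~\ref{cor_qac} and the algebraic closedness of $k$. Hence the Hochschild-Serre spectral sequence of the $\mu_d$-torsor $\spec B'\to[\sfrac{\spec B'}{\mu_d}]$ degenerates (as in Proposition~\ref{prop_alg_closed}) and $\h^q([\sfrac{\spec B'}{\mu_d}],\bbG_m)=H^q(\mu_d,(B')^*)$. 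Now $(B')^*=k^*\times U$ as a $\mu_d$-module, where $k^*$ carries the trivial action and $U$ is the group of units congruent to $1$ along each branch; since $d$ is prime to $\Char k$, Hensel's lemma makes $U$ uniquely $d$-divisible, so $H^r(\mu_d,U)=0$ for $r\geq 1$. As $k^*$ is $d$-divisible with $d$-torsion $\mu_d(k)\cong\sfrac{\bbZ}{d\bbZ}$, Theorem~\ref{prop_cohom_groups} then gives $H^r(\mu_d,(B')^*)=H^r(\mu_d,k^*)=\sfrac{\bbZ}{d\bbZ}$ for $r$ odd and $0$ for $r$ even, $r>0$. Consequently $R^q\pi_*\bbG_{m,Y}=\bigoplus_{\sigma\in\Sigma}\overline{\sigma}_*(\sfrac{\bbZ}{d_\sigma \bbZ})$ for $q$ odd and $R^q\pi_*\bbG_{m,Y}=0$ for $q$ even, $q\geq 2$.

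Assembling the spectral sequence is then immediate. By the computation just made, $E_2^{p,q}=0$ for $q$ odd and $p\geq 1$ (the sheaves are skyscrapers) and for $q$ even $\geq 2$, so the $E_2$-page is concentrated in the column $p=0$ except for the single group $E_2^{1,0}=\pic(C)$. Every differential therefore vanishes for degree reasons — its target lies in a column $p\geq 2$ or in a row $q<0$ — and $E_2=E_\infty$. Reading off the abutment yields $\h^0(Y,\bbG_m)=\sez(C,\bbG_m)$, $\h^r(Y,\bbG_m)=0$ for $r$ even $\geq 2$, and $\h^r(Y,\bbG_m)=\bigoplus_{\sigma\in\Sigma}\sfrac{\bbZ}{d_\sigma \bbZ}$ for $r$ odd $\geq 3$; for $r=1$ the two surviving graded pieces $\pic(C)=E_\infty^{1,0}$ and $E_\infty^{0,1}=\bigoplus_{\sigma\in\Sigma}\sfrac{\bbZ}{d_\sigma \bbZ}$ give the short exact sequence $0\to\pic(C)\to\h^1(Y,\bbG_m)\to\bigoplus_{\sigma\in\Sigma}\sfrac{\bbZ}{d_\sigma \bbZ}\to 0$, which is the stated one since $\h^1(Y,\bbG_m)=\pic(Y)$ and the edge map is $\pi^*$.

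The main obstacle is the local analysis at the nodes of $C$: one must use the correct local model of a twisted node, check that $\spec B'$ is cohomologically trivial for $\bbG_m$ in positive degrees (so that Hochschild-Serre collapses to the group cohomology of $\mu_d$), and identify the $\mu_d$-module $(B')^*$ together with its uniquely $d$-divisible part $U$. An alternative, parallel to Proposition~\ref{prop_nodal}, is to pass to the normalization $\nu\colon\hat Y\to Y$ — a finite morphism onto a disjoint union of smooth tame orbicurves — apply Corollary~\ref{cor_orbicurve} to compute $\h^*(\hat Y,\bbG_m)$, and analyse the long exact sequence of $0\to\bbG_{m,Y}\to\nu_*\bbG_{m,\hat Y}\to\bigoplus_{\sigma\in S_Y}\sigma_*k^*\to 0$; this, however, requires the extra bookkeeping of matching the stacky points of $\hat Y$ lying over a node of $C$ (two of them, with the same stabilizer) against the points of $\Sigma$.
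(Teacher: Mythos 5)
Your argument is correct, but it follows a genuinely different route from the paper's. The paper normalizes: it forms $\hat{Y}=Y\times_C\hat{C}$, pushes forward along the finite map $\hat{\nu}\colon\hat{Y}\rightarrow Y$, quotes Corollary~\ref{cor_orbicurve} for the smooth orbicurve $\hat{Y}$, identifies the quotient sheaf at the nodes as skyscrapers with stalk $k^*$ acted on trivially by $G_\sigma$, and then extracts the answer from the long exact sequence, handling degrees $0,1,2$ by a comparison diagram and the snake lemma and matching the two branch points of $\hat{Y}$ over each stacky node via the difference map $\rho$. You instead push forward to the coarse space along $\pi\colon Y\rightarrow C$, use Proposition~\ref{prop_nodal} for the bottom row, and compute $R^q\pi_*\bbG_m$ as skyscrapers by a local Hochschild--Serre argument at each stacky point: the key input is that $(B')^*\cong k^*\times(1+\mathfrak{m})$ as $\mu_d$-modules with $1+\mathfrak{m}$ uniquely $d$-divisible by Hensel's lemma, so the stalk is $H^q(\mu_d,k^*)$, i.e.\ $\sfrac{\bbZ}{d_\sigma\bbZ}$ in odd degrees and $0$ in positive even degrees. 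Your route treats smooth stacky points and stacky nodes uniformly, dispenses with the branch-matching bookkeeping and the low-degree diagram chase, and in fact reproves Corollary~\ref{cor_orbicurve} as the case $S=\emptyset$; the paper's route avoids computing higher direct images of the coarse map and leans on the already-established smooth case, with the same local unit-group computation appearing there as $Q_\sigma^{\sh}=\sfrac{\hat{A}^*}{A^*}=k^*$. Two small simplifications you could make: $\h^r(\spec B',\bbG_m)=0$ for $r\geq 1$ is immediate because $B'$ is strictly Henselian (no Weil-divisor argument needed), and it is worth one sentence that a sheaf on $C$ supported on finitely many closed $k$-points is a direct sum of skyscrapers, which is what gives $E_2^{p,q}=0$ for $p\geq 1$, $q\geq 1$ and hence the degeneration you use.
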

\begin{proof}
  With notations as in the proof of Proposition~\ref{prop_nodal}, let
  $\hat{Y}= Y \times_C \hat{C}$. Notice that the induced morphism
  $\hat{\nu} \colon \hat{Y} \rightarrow Y$ is finite because $\nu$ is
  finite, hence, by Leray spectral sequence for $\hat{\nu}$, we get
  $\h^r(Y, \hat{\nu}_* \bbG_{m, \hat{Y}})= \h^r(\hat{Y}, \bbG_m)$ for
  $r \geq 0$. Moreover $\hat{Y}$ is smooth, hence
  Corollary~\ref{cor_orbicurve} gives a description of the groups
  $\h^r(\hat{Y}, \bbG_m)$. We have the following exact sequence of
  sheaves \[0 \rightarrow \bbG_{m,Y} \rightarrow \hat{\nu}_*\bbG_{m,
    \hat{Y}} \rightarrow Q \rightarrow 0\text{,}\] where $Q=
  \bigoplus_{\sigma \in S_Y}\sigma_*Q_\sigma$ is concentrated in
  $S_Y$. Consider the long exact sequence in cohomology
\begin{equation}\label{eq:a}
\cdots \rightarrow \h^r(Y, \bbG_m) \rightarrow\h^r(Y,
\hat{\nu}_* \bbG_{m, \hat{Y}}) \rightarrow \bigoplus_{\sigma \in
  S_Y}\h^r(Y, \sigma_* Q_\sigma) \rightarrow \cdots\text{.}
\end{equation}
Since $\sigma$ is a closed embedding, by Leray spectral sequence for
$\sigma$ and Proposition~\ref{prop_alg_closed}, we obtain $\h^r(Y,
\sigma_* Q_\sigma) \cong \h^r(\sigma, Q_\sigma)=H^r(G_\sigma,
Q_\sigma^{\sh})$ for $r \geq 0$, where $G_\sigma=
\sfrac{\bbZ}{d_\sigma \bbZ}$ is the stabilizer
of $\sigma$ and $Q_\sigma^{\sh}$ is the strictly henselian local ring
of $Q$ at $\sigma$. Notice that $Q_\sigma^{sh}=
\sfrac{\hat{A}^*}{A^*}$, where $A = \scrO_{Y, \sigma}^{\sh}$ and
$\spec \hat{A}$ is the normalization of $\spec A$. Since $Y$ is
\'etale locally a nodal curve over $k$, we have that $\hat{A}^*=
k^*\times k^*$ and $A^*=k^*$, therefore $Q_\sigma^{\sh}=k^*$. By
Theorem~\ref{prop_cohom_groups}, we have
\begin{equation*}
H^r(G_\sigma, Q_\sigma^{\sh})=
\begin{cases}
k^* & r=0\\
\sfrac{\bbZ}{d_\sigma \bbZ} & r \equiv 1 \pod{2}\\
0 & r \equiv 0 \pod{2}\text{, }r\geq 2\text{.}
\end{cases}
\end{equation*}
We substitute these results in~\eqref{eq:a} and obtain, for $r \geq
1$, \[0 \rightarrow \h^{2r+1}(Y, \bbG_m) \rightarrow \bigoplus_{\sigma
  \in \hat{\Sigma}}\sfrac{\bbZ}{d_\sigma \bbZ} \xrightarrow{\rho}
\bigoplus_{\sigma \in S_Y}\sfrac{\bbZ}{d_\sigma \bbZ} \rightarrow
\h^{2r+2}(Y, \bbG_m) \rightarrow 0\text{,}\] where $\hat{\Sigma}$ is
the set of closed points of $\hat{Y}$ with non trivial stabilizer and
the map $\rho$ is described as follows. If $\sigma \in S_Y$, with
$d_\sigma >0$, then $\sigma \times_Y \hat{Y}$ consists of two points
$\sigma_1, \sigma_2 \in \hat{\Sigma}$; let $\rho_\sigma$ be the
restriction of $\rho$ to $\sfrac{\bbZ}{d_{\sigma_1} \bbZ}\oplus
\sfrac{\bbZ}{d_{\sigma_2}\bbZ}$, then $\rho_\sigma(a,b)= a-b \in
\sfrac{\bbZ}{\sigma \bbZ}$. Otherwise, the restriction of $\rho$ to
$\sfrac{\bbZ}{d_\sigma \bbZ}$ is zero. It follows that, for every $r
\geq 3$,
\begin{equation*}
\h^r(Y, \bbG_m)=
\begin{cases}
\bigoplus_{\sigma \in \Sigma}\sfrac{\bbZ}{d_\sigma \bbZ} &r \equiv 1 \pod{2}\\
0 & r \equiv 0 \pod{2}\text{.}
\end{cases}
\end{equation*}
Moreover we have the following commutative diagram with exact rows
  \[
  \begin{tikzpicture}[xscale=2.0,yscale=-1.6]
    \node (A0_0) at (0, 0) {$0$};
    \node (A0_1) at (0.8, 0) {$\sez (C, \bbG_m)$};
    \node (A0_2) at (1.9, 0) {$\sez (\hat{C}, \bbG_m)$};
    \node (A0_3) at (3, 0) {$\displaystyle{\bigoplus_{\sigma \in S_Y} k^*}$};
    \node (A0_4) at (3.9, 0) {$\pic (C)$};
    \node (A0_5) at (4.8, 0) {$\pic (\hat{C})$};
    \node (A0_6) at (5.8, 0) {$0$};
    \node (A1_0) at (0, 1) {$0$};
    \node (A1_1) at (0.8, 1) {$\h^0(Y, \bbG_m)$};
    \node (A1_2) at (1.9, 1) {$\sez (\hat{C}, \bbG_m)$};
    \node (A1_3) at (3, 1) {$\displaystyle{\bigoplus_{\sigma \in S_Y} k^*}$};
    \node (A1_4) at (3.9, 1) {$\pic (Y)$};
    \node (A1_5) at (4.8, 1) {$\pic (\hat{Y})$};
    \node (A1_6) at (5.8, 1) {$\displaystyle{\bigoplus_{\sigma \in S_Y}\sfrac{\bbZ}{d_\sigma \bbZ}}$};
    \node (A1_7) at (7, 1) {$\h^2(Y, \bbG_m)$};
    \node (A1_8) at (7.8, 1) {$0$};
    \path (A1_4) edge [->]node [auto] {$\scriptstyle{}$} (A1_5);
    \path (A0_1) edge [->]node [auto] {$\scriptstyle{}$} (A1_1);
    \path (A1_7) edge [->]node [auto] {$\scriptstyle{}$} (A1_8);
    \path (A0_0) edge [->]node [auto] {$\scriptstyle{}$} (A0_1);
    \path (A0_1) edge [->]node [auto] {$\scriptstyle{}$} (A0_2);
    \path (A1_0) edge [->]node [auto] {$\scriptstyle{}$} (A1_1);
    \path (A0_3) edge [-,double distance=1.5pt]node [auto] {$\scriptstyle{}$} (A1_3);
    \path (A0_6) edge [->]node [auto] {$\scriptstyle{}$} (A1_6);
    \path (A1_1) edge [->]node [auto] {$\scriptstyle{}$} (A1_2);
    \path (A1_5) edge [->]node [auto] {$\scriptstyle{}$} (A1_6);
    \path (A0_4) edge [->]node [auto] {$\scriptstyle{}$} (A0_5);
    \path (A0_3) edge [->]node [auto] {$\scriptstyle{}$} (A0_4);
    \path (A0_4) edge [->]node [auto] {$\scriptstyle{}$} (A1_4);
    \path (A0_5) edge [->]node [auto] {$\scriptstyle{}$} (A0_6);
    \path (A0_2) edge [-,double distance=1.5pt]node [auto] {$\scriptstyle{}$} (A1_2);
    \path (A1_6) edge [->]node [auto] {$\scriptstyle{}$} (A1_7);
    \path (A1_2) edge [->]node [auto] {$\scriptstyle{\rho_0}$} (A1_3);
    \path (A0_2) edge [->]node [auto] {$\scriptstyle{\rho_0}$} (A0_3);
    \path (A0_5) edge [->]node [auto] {$\scriptstyle{}$} (A1_5);
    \path (A1_3) edge [->]node [auto] {$\scriptstyle{}$} (A1_4);
  \end{tikzpicture}
  \]
which implies $\h^0(Y, \bbG_m)= \sez(C, \bbG_m)$ and $\h^2(Y, \bbG_m)=0$. Finally, by snake lemma, 
the following sequence is exact \[0 \rightarrow \pic (C)
\rightarrow \h^1(Y, \bbG_m) \rightarrow \bigoplus_{\sigma \in \Sigma}
\sfrac{\bbZ}{d_\sigma \bbZ}\rightarrow 0\text{.}\qedhere\]
\end{proof}
\subsection{Banded gerbes over orbicurves}\label{subsection_abeliangerbes}
Recall that, if $\scrG$ is a sheaf of abelian groups on a twisted
nodal curve $Y$, then $\h^2(Y, \scrG)$ classifies the $\scrG$-gerbes
over $Y$ banded by $\scrG$ (\cite{giraud}, IV.3.5).  Let $G_0$ be an
abelian finite group of order not divided by $\Char k$ and let $G_0 =
\bigoplus_{h=1}^M \sfrac{\bbZ}{d_h \bbZ}$ be a decomposition of $G_0$
as a direct sum of cyclic groups. We have a short exact sequence \[0
\rightarrow G_0 \rightarrow \bigoplus_{h=1}^M\bbG_m \rightarrow
\bigoplus_{h=1}^M\bbG_m \rightarrow 0\text{,}\] deduced by Kummer
sequence for each factor. From the induced long exact sequence in
cohomology we get \[\bigoplus_{h=1}^M \pic (Y) \xrightarrow{\psi}
\h^2(Y, G_0)= \bigoplus_{h=1}^M \h^2(Y, \sfrac{\bbZ}{d_h \bbZ})
\rightarrow \bigoplus_{h=1}^M \h^2(Y, \bbG_m)=0\text{,}\] where,
according to \cite{cadman} (Section~2.4), the map $\psi$ associates to
$(L_1, \ldots, L_M) \in \bigoplus_{l=1}^M \pic (Y)$ the $G_0$-gerbe
$\sqrt[\leftroot{-1}\uproot{3}d_1]{\sfrac{L_1}{Y}}\times_Y \cdots
\times_Y \sqrt[\leftroot{-1}\uproot{3}d_M]{\sfrac{L_M}{Y}}$. Therefore
every $G_0$-gerbe over $Y$ banded by $G_0$ is obtained as a finite
number of root constructions.
\section{Trivial gerbes}
Throughout this section we will use the notations of
Section~\ref{section_setting}.
\begin{prop}\label{prop_trivial_gerbe}
We have $\h^1(Y \times \B G_0, \bbG_m) = \pic (Y)
\oplus \Hom (G_0, K^*)$ and, for $r\geq 2$, there are short exact sequences \[0 \rightarrow Q^r \rightarrow \h^r(Y \times
\B G_0, \bbG_m) \rightarrow \bigoplus_{l=1}^N \sfrac{H^{r-1}(G_0
  \times \sfrac{\bbZ}{d_l \bbZ}, \bbZ)}{H^{r-1}(G_0, \bbZ)}
\rightarrow 0\text{,}\] where $Q^r$ fits in the
following exact sequence \[0 \rightarrow H^r(G_0,\sez (C, \bbG_m))
\rightarrow Q^r \rightarrow H^{r-1}(G_0, \pic(Y)) \rightarrow
0\text{.}\]
\end{prop}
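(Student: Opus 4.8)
The plan is to use the canonical $G_0$-Galois cover of $X=Y\times\B G_0$, namely the projection $\iota\colon Y=Y\times\spec k\to Y\times\B G_0$ obtained by pulling back the atlas $\spec k\to\B G_0$. Since $\spec k\to\B G_0$ is the trivial $G_0$-torsor over a point, $G_0$ acts trivially on $Y$, hence trivially on $\h^q(Y,\bbG_m)$, and the Hochschild--Serre spectral sequence of $\iota$ reads
\[E_2^{p,q}=H^p(G_0,\h^q(Y,\bbG_m))\Rightarrow\h^{p+q}(X,\bbG_m)\]
with trivial coefficients. This is the spectral sequence associated to the complex of $G_0$-modules with trivial action computing $R\sez(Y,\bbG_m)$, so its differentials are the maps $\Phi_r^{p,q}$ of Lemma~\ref{lemma_zero_maps}. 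A differential out of $E_r^{p,q}$ lands in row $q-r+1$, so it can be nonzero only for $r\le q+1$, and there it vanishes by Lemma~\ref{lemma_zero_maps}; hence the spectral sequence degenerates at $E_2$.

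By Corollary~\ref{cor_orbicurve} the $E_2$-page is concentrated in the rows $q=0$, $q=1$ and $q$ odd with $q\ge 3$, with coefficients $\sez(C,\bbG_m)$, $\pic(Y)$ and $\bigoplus_{l=1}^N\sfrac{\bbZ}{d_l\bbZ}$ respectively. For $r=1$ the only nonzero graded pieces of $\h^1(X,\bbG_m)=\pic(X)$ are $E_\infty^{0,1}=\pic(Y)$ and $E_\infty^{1,0}=H^1(G_0,\sez(C,\bbG_m))=\Hom(G_0,\sez(C,\bbG_m))$; the torsion subgroup of $\sez(C,\bbG_m)$ consists of constants, hence equals that of $K^*$, so $\Hom(G_0,\sez(C,\bbG_m))=\Hom(G_0,K^*)$. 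The edge map $\pic(X)\to\pic(Y)$ is restriction along $\iota$ and is split by pullback along the projection $X\to Y$; combined with the map $\Hom(G_0,K^*)=\pic(\B G_0)\to\pic(X)$ induced by the projection $X\to\B G_0$, this gives $\h^1(X,\bbG_m)=\pic(Y)\oplus\Hom(G_0,K^*)$.

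For $r\ge 2$ let $Q^r=F^{r-2}\h^r(X,\bbG_m)$, where $F^\bullet$ is the spectral-sequence filtration. Since the row $q=2$ vanishes, $F^{r-2}=F^{r-1}$, so $Q^r$ is built only from the rows $q=0,1$ and sits in
\[0\to H^r(G_0,\sez(C,\bbG_m))\to Q^r\to H^{r-1}(G_0,\pic(Y))\to 0\text{,}\]
which is the inner sequence of the statement. The quotient $\h^r(X,\bbG_m)/Q^r$ is then built from the rows $q$ odd with $3\le q\le r$, and has associated graded $\bigoplus_{l=1}^N\bigoplus_{3\le q\le r,\ q\text{ odd}}H^{r-q}(G_0,\sfrac{\bbZ}{d_l\bbZ})$.

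It remains to identify this quotient with $\bigoplus_{l=1}^N\sfrac{H^{r-1}(G_0\times\sfrac{\bbZ}{d_l\bbZ},\bbZ)}{H^{r-1}(G_0,\bbZ)}$, and I expect this to be the main obstacle. I would first run the same degeneration argument for the Lyndon--Hochschild--Serre spectral sequence of $1\to\sfrac{\bbZ}{d_l\bbZ}\to G_0\times\sfrac{\bbZ}{d_l\bbZ}\to G_0\to 1$: its $E_2$-page $H^p(G_0,H^q(\sfrac{\bbZ}{d_l\bbZ},\bbZ))$ is computed by Corollary~\ref{cor_Z}, and it degenerates at $E_2$ by Lemma~\ref{lemma_zero_maps} just as above. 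The inflation $H^{r-1}(G_0,\bbZ)\to H^{r-1}(G_0\times\sfrac{\bbZ}{d_l\bbZ},\bbZ)$ is split (by restriction to the first factor) and equals the bottom step of that filtration, so its cokernel has associated graded $\bigoplus_{3\le q\le r,\ q\text{ odd}}H^{r-q}(G_0,\sfrac{\bbZ}{d_l\bbZ})$, matching the $l$-th summand above. To promote this matching of graded pieces to the stated short exact sequence I would construct a morphism of spectral sequences from the one for $\h^\bullet(X,\bbG_m)$ to the direct sum over $l$ of those for $G_0\times\sfrac{\bbZ}{d_l\bbZ}$ --- induced by comparing $X$ in an \'etale neighbourhood of $\sigma_l$ with $\B(G_0\times\sfrac{\bbZ}{d_l\bbZ})$, or equivalently by tracking the connecting maps of the Weil-divisor sequence~\eqref{eq:succ_beta} together with the factorisation of Lemma~\ref{lemma_beta} --- verify it is compatible with the filtrations and an isomorphism on the rows $q\ge 3$, and conclude by the five lemma using $E_2$-degeneration on both sides.
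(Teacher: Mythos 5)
Up to the last step your argument is the paper's: you take the $G_0$-cover $Y\to Y\times\B G_0$, form the Hochschild--Serre spectral sequence $E_2^{p,q}=H^p(G_0,\h^q(Y,\bbG_m))$, kill all differentials with Lemma~\ref{lemma_zero_maps}, read off the rows from Corollary~\ref{cor_orbicurve}, define $Q^r$ as the filtration piece coming from $q\le 1$, and split off $\h^1$ using the section of $\iota^*$ given by $\mathrm{pr}^*$ (a slight, and correct, variant of the paper's diagram comparison with Theorem~\ref{theorem_cohom}). All of that is fine.

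The genuine gap is the step you yourself flag and then defer: identifying $\h^r(Y\times\B G_0,\bbG_m)/Q^r$ with $\bigoplus_l \sfrac{H^{r-1}(G_0\times\sfrac{\bbZ}{d_l\bbZ},\bbZ)}{H^{r-1}(G_0,\bbZ)}$. Matching associated graded pieces is not enough --- finite abelian groups with the same graded pieces need not be isomorphic --- so something must control the extensions, and your proposed mechanism is not in place. A morphism of spectral sequences requires an actual compatible map of covers, and the \'etale-local comparison you suggest (restricting to the residual gerbe at $\sigma_l$, which is $\B(G_0\times\sfrac{\bbZ}{d_l\bbZ})$) naturally compares with $H^\bullet(G_0\times\sfrac{\bbZ}{d_l\bbZ},k^*)$, not with $H^{\bullet-1}(G_0\times\sfrac{\bbZ}{d_l\bbZ},\bbZ)$, so it does not directly produce the map you need; the alternative via the connecting maps of~\eqref{eq:succ_beta} is likewise only named, not carried out. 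The paper closes this step purely group-theoretically, with no geometric comparison morphism: it observes that for $q\ge 2$ the terms $E_2^{p,q}$, together with the relevant maps, coincide with the terms $F_2^{p,q-1}$ of the Hochschild--Serre spectral sequence
\[F_2^{p,q}=\bigoplus_{l=1}^N H^p\bigl(G_0,H^q(\sfrac{\bbZ}{d_l\bbZ},\bbZ)\bigr)\Rightarrow\bigoplus_{l=1}^N H^{p+q}\bigl(G_0\times\sfrac{\bbZ}{d_l\bbZ},\bbZ\bigr)\]
for the split extension $1\to\sfrac{\bbZ}{d_l\bbZ}\to G_0\times\sfrac{\bbZ}{d_l\bbZ}\to G_0\to 1$ (via Corollary~\ref{cor_Z}), that both spectral sequences degenerate by the same Lemma~\ref{lemma_zero_maps}, and then compares the two filtrations, the bottom step of the second one being the (split, injective) inflation $H^{r-1}(G_0,\bbZ)\hookrightarrow H^{r-1}(G_0\times\sfrac{\bbZ}{d_l\bbZ},\bbZ)$. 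Your write-up stops exactly where this comparison has to be made, so as it stands the outer exact sequence of the statement is not proved.
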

\begin{proof}
The map $Y \xrightarrow{\rho} Y \times \B G_0$ obtained by
$\spec k \rightarrow \B G_0$ is a Galois cover with group $G_0$. Then
we can consider the Hochschild-Serre spectral sequence
\begin{equation}\label{eq:succ_hs_Y}
E_2^{p,q}=H^p(G_0, \h^q(Y, \bbG_m)) \Rightarrow \h^{p+q}(Y \times \B
G_0, \bbG_m)\text{.}
\end{equation}
By Corollary~\ref{cor_orbicurve},
\begin{equation*}
E_2^{p,q}=
\begin{cases}
H^p(G_0, \sez{C, \bbG_m}) & q=0\\
H^p(G_0, \pic (Y)) & q=1\\
\bigoplus_{l=1}^N H^p(G_0, H^{q-1}(\sfrac{\bbZ}{d_l\bbZ},\bbZ)) & q \geq 2\text{.} 
\end{cases}
\end{equation*}
In particular, the groups $E_2^{p,q}$ and the maps between them
coincide, for $q \geq 2$, with the groups $F_2^{p,q-1}$ and relative
maps for the Hochschild-Serre spectral sequence
\begin{equation}\label{eq:succ_hs_G}
F_2^{p,q}= \bigoplus_{l=1}^N H^p(G_0, H^q(\sfrac{\bbZ}{d_l \bbZ},
\bbZ)) \Rightarrow \bigoplus_{l=1}^N H^{p+q}(G_0 \times
\sfrac{\bbZ}{d_l \bbZ}, \bbZ)\text{.}
\end{equation}
By Lemma~\ref{lemma_zero_maps}, we have $E_\infty^{p,q}=
E_2^{p,q}$. Comparing filtrations for
\eqref{eq:succ_hs_Y} and
\eqref{eq:succ_hs_G}, we get \[0
\rightarrow H_{r-1}^r \rightarrow \h^r(Y \times \B G_0, \bbG_m)
\rightarrow \bigoplus_{l=1}^N \sfrac{H^{r-1}(\sfrac{\bbZ}{d_l
    \bbZ}\times G_0,\bbZ)}{H^{r-1}(G_0,\bbZ)} \rightarrow 0\text{,}\]
for $r \geq 1$, where $H_{r-1}^r$ fits in the following exact sequence \[0 \rightarrow
H^r(G_0, \sez (C, \bbG_m)) \rightarrow H_{r-1}^r \rightarrow
H^{r-1}(G_0, \pic(Y)) \rightarrow 0\text{.}\]
Consider the natural morphism $Y \times \B G_0
\xrightarrow{\pi} Y$. We have the following commutative diagram
  \[
  \begin{tikzpicture}
    \def\x{3.0}
    \def\y{-1.4}
    \node (A0_0) at (0.5*\x, 0*\y) {$0$};
    \node (A0_1) at (1*\x, 0*\y) {$\pic (Y)$};
    \node (A0_2) at (2*\x, 0*\y) {$\h^1(Y \times \B G_0, \bbG_m)$};
    \node (A0_3) at (3.1*\x, 0*\y) {$\Hom (G_0, K^*)$};
    \node (A0_4) at (3.8*\x, 0*\y) {$0$};
    \node (A1_0) at (0.5*\x, 1*\y) {$0$};
    \node (A1_1) at (1*\x, 1*\y) {$\pic (Y)$};
    \node (A1_2) at (2*\x, 1*\y) {$\h^1(Y \times \B G_0, \bbG_m)$};
    \node (A1_3) at (3.1*\x, 1*\y) {$\Hom (G_0, K^*)$};
    \node (A1_4) at (3.8*\x, 1*\y) {$0$};
    \path (A0_3) edge [->] node [auto] {$\scriptstyle{}$} (A0_2);
    \path (A1_3) edge [-, double] node [auto] {$\scriptstyle{}$} (A0_3);
    \path (A1_1) edge [->] node [auto] {$\scriptstyle{\pi^*}$} (A1_2);
    \path (A0_2) edge [->] node [above] {$\scriptstyle{\rho^*}$} (A0_1);
    \path (A0_4) edge [->] node [auto] {$\scriptstyle{}$} (A0_3);
    \path (A1_0) edge [->] node [auto] {$\scriptstyle{}$} (A1_1);
    \path (A0_2) edge [-,double] node [auto] {$\scriptstyle{}$} (A1_2);
    \path (A0_1) edge [->] node [auto] {$\scriptstyle{}$} (A0_0);
    \path (A0_1) edge [-,double] node [left] {$\scriptstyle{{(\pi \circ \rho)}_*}$} (A1_1);
    \path (A1_2) edge [->] node [auto] {$\scriptstyle{}$} (A1_3);
    \path (A1_3) edge [->] node [auto] {$\scriptstyle{}$} (A1_4);
  \end{tikzpicture}
  \]
where the second row is given by Theorem~\ref{theorem_cohom} and the
first row is deduced from the argument above, after
noticing that $\Hom (G_0, \sez (C, \bbG_m)) = \Hom (G_0,
K^*)$. Hence we get \[\h^1(Y \times \B G_0, \bbG_m) = \pic (Y)
\oplus \Hom (G_0, K^*)\text{.}\qedhere\]
\end{proof}
\begin{oss}\label{remark_cyclic_gerbe}
If $G_0$ is cyclic then, by Theorem~\ref{prop_cohom_groups} and
Proposition~\ref{prop_trivial_gerbe}, we have, for $r\geq 1$, \[\h^{2r+1}(C \times \B G_0,
\bbG_m)= \Hom (G_0, \sez(C, \bbG_m)) \oplus H^2(G_0, \pic
(C))\text{.}\]
\end{oss}
\begin{cor}\label{cor_trivial_gerbe}
If $C$ is projective and $G_0$ is cyclic, then
\begin{equation*}
\h^r(C \times \B G_0, \bbG_m)=
\begin{cases}
k^* & r=0\\
\pic (C)\oplus G_0 & r=1\\
{(G_0)}^{2g} & r \equiv 0 \pod{2} \text{, } r\geq 2\\
G_0 \oplus G_0& r \equiv 1 \pod{2}\text{, }r \geq 3\text{.}
\end{cases}
\end{equation*}
\end{cor}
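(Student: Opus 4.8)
The plan is to specialize Proposition~\ref{prop_trivial_gerbe} and Remark~\ref{remark_cyclic_gerbe} to the case $Y=C$ (so $N=0$, since $C$ is a scheme) and to feed in the classical structure of the Picard group of a smooth projective curve. All group-cohomology groups below are taken with respect to the trivial $G_0$-action, as in the setup of Proposition~\ref{prop_trivial_gerbe}; write $d$ for the order of $G_0$, which is prime to $\Char k$.

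First I would record the inputs. Since $C$ is projective and connected over the algebraically closed field $k$, we have $\sez(C,\bbG_m)=k^*$, and $\mu_d(k)\cong\bbZ/d\bbZ\cong G_0$ while $k^*$ is divisible. From Theorem~\ref{prop_cohom_groups} with $A=k^*$ (the norm being $a\mapsto a^d$) one gets $H^0(G_0,k^*)=k^*$, $H^r(G_0,k^*)\cong G_0$ for $r$ odd, and $H^r(G_0,k^*)=k^*/(k^*)^d=0$ for $r$ even and positive. Next, the degree map gives a short exact sequence $0\to\pic^0(C)\to\pic(C)\to\bbZ\to0$ which splits in the category of abelian groups, hence (the $G_0$-action being trivial) as $G_0$-modules, so $H^r(G_0,\pic(C))\cong H^r(G_0,\pic^0(C))\oplus H^r(G_0,\bbZ)$. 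By Corollary~\ref{cor_Z}, $H^r(G_0,\bbZ)$ is $\bbZ$, $0$, $G_0$ for $r=0$, $r$ odd, $r$ even positive. For the Jacobian, $\pic^0(C)$ is the group of $k$-points of an abelian variety of dimension $g$ (the genus of $C$), hence divisible with $d$-torsion $\pic^0(C)[d]\cong(\bbZ/d\bbZ)^{2g}\cong(G_0)^{2g}$; applying Theorem~\ref{prop_cohom_groups} once more gives $H^0(G_0,\pic^0(C))=\pic^0(C)$, $H^r(G_0,\pic^0(C))\cong(G_0)^{2g}$ for $r$ odd, and $0$ for $r$ even positive. Combining, $H^r(G_0,\pic(C))$ equals $\pic(C)$, $(G_0)^{2g}$, $G_0$ for $r=0$, $r$ odd, $r$ even positive.

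With these in hand the corollary should be immediate. The case $r=0$ is Theorem~\ref{theorem_cohom} together with $\sez(C,\bbG_m)=k^*$. For $r=1$, Proposition~\ref{prop_trivial_gerbe} gives $\h^1(C\times\B G_0,\bbG_m)=\pic(C)\oplus\Hom(G_0,K^*)$, and $\Hom(G_0,K^*)=\mu_d(K)=\mu_d(k)\cong G_0$ because $K=k(C)$ contains no roots of unity outside $k$. For $r\ge2$, the extra summand of Proposition~\ref{prop_trivial_gerbe} vanishes since $N=0$, so $\h^r(C\times\B G_0,\bbG_m)=Q^r$ with $0\to H^r(G_0,k^*)\to Q^r\to H^{r-1}(G_0,\pic(C))\to0$. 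When $r$ is even and $\ge2$ the left term is $0$ and the right term is $(G_0)^{2g}$, giving $\h^r=(G_0)^{2g}$. When $r$ is odd and $\ge3$ I would invoke Remark~\ref{remark_cyclic_gerbe}, which identifies these groups with $\Hom(G_0,\sez(C,\bbG_m))\oplus H^2(G_0,\pic(C))=\mu_d(k)\oplus G_0\cong G_0\oplus G_0$; this is what pins down the extension $0\to G_0\to Q^r\to G_0\to0$ as split.

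The only non-formal ingredient is the computation of $H^\bullet(G_0,\pic^0(C))$, which rests on the classical fact that the Jacobian of a smooth projective genus-$g$ curve over $k$ is divisible with $d$-torsion $(\bbZ/d\bbZ)^{2g}$ for $d$ prime to $\Char k$; the rest is bookkeeping with the two-periodicity of cyclic-group cohomology and the exact sequences already established. The one place where I expect to need previously established machinery rather than a bare computation is the splitting of the odd-degree extension, which is why I would lean on Remark~\ref{remark_cyclic_gerbe} there instead of on the short exact sequence of Proposition~\ref{prop_trivial_gerbe} alone.
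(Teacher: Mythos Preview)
Your proposal is correct and follows essentially the same route as the paper: both specialize Proposition~\ref{prop_trivial_gerbe} to $Y=C$ with $N=0$, compute $H^r(G_0,k^*)$ and $H^r(G_0,\pic(C))$ via the periodicity in Theorem~\ref{prop_cohom_groups} together with the divisibility and torsion structure of $k^*$ and of the Jacobian, and then appeal to Remark~\ref{remark_cyclic_gerbe} to resolve the odd-degree extension. The only cosmetic difference is that you make explicit the splitting of $0\to\pic^0(C)\to\pic(C)\to\bbZ\to0$, whereas the paper simply lists the resulting values of $H^r(G_0,\pic(C))$.
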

\begin{proof}
  Let $d$ be the order of $G_0$.  Recall that there is a short exact
  sequence \[0 \rightarrow \pic^0(C) \rightarrow \pic(C) \rightarrow
  \bbZ \rightarrow 0\text{.}\] Moreover, since $C$ is projective,
  $\sez (C, \bbG_m) =k^*$ and the sequence \[0 \rightarrow
  {(\sfrac{\bbZ}{d \bbZ})}^{2g}\rightarrow \pic^0(C) \xrightarrow{d}
  \pic^0(C) \rightarrow 0\] is exact, where the map $d$ is defined by
  $d(a) = a^{d}$, for all $a \in \pic^0(C)$ (see \cite{mumford},
  Section~IV.21, Lang's Theorem). Therefore, applying
  Theorem~\ref{prop_cohom_groups}, we obtain, for $r\geq 1$
\begin{equation*}
\begin{cases}
H^{2r-1}(G_0,k^*)=G_0\text{, }&H^{2r}(G_0, k^*)=0\text{,}\\
H^{2r-1}(G_0, \pic^0(C)) = {(G_0)}^{2g}\text{, }&H^{2r}(G_0, \pic^0(C)) = 0\text{,}\\
H^{2r-1}(G_0, \pic(C))={(G_0)}^{2g}\text{, }&H^{2r}(G_0, \pic(C))=G_0\text{.}\\
\end{cases}
\end{equation*}
Then the statement follows from Proposition~\ref{prop_trivial_gerbe}
and Remark~\ref{remark_cyclic_gerbe}.
\end{proof}
\section{Examples}\label{section_example}
In this section we present two examples: the first one shows
that, in general, the groups $\h^r(X, \bbG_m)$ may depend
on the gerbe $X \rightarrow Y$, for $r \geq 3$; the second one shows
that $\h^2(X, \bbG_m)$ may depend on the gerbe $X \rightarrow Y$ if the
stabilizers are not abelian.

Let $p$ be a prime integer and consider $\bbA_\bbC^1$ with the action
$\bbC[x]\times \sfrac{\bbZ}{p \bbZ} \rightarrow \bbC [x]$ given by
$(x, \lambda) \mapsto \lambda x$, for all $\lambda \in \sfrac{\bbZ}{p
  \bbZ}$. Then $Y=[\sfrac{\bbA_\bbC^1}{\mu_p}]$ is an orbicurve.

\paragraph{}
Let $X=[\sfrac{\bbA_\bbC^1}{\mu_{p^2}}]$, where the action is given by
$(x, \lambda)\mapsto\lambda^p x$, for $\lambda \in \sfrac{\bbZ}{p^2
  \bbZ}$. By Kummer sequence, $\h^2(Y, \mu_p)=\sfrac{\bbZ}{p \bbZ}$
and, since $p$ is prime, there are only two non isomorphic banded
$\mu_p$-gerbes over $Y$ (\cite{giraud}, IV.3.5); in particular, by
Theorem~\ref{theorem_AOV}, $X$ is the only non trivial $\mu_p$-gerbe
banded over $Y$ (up to isomorphism). By Corollary~\ref{cor_cyclic},
$\h^{2r+1}(X, \bbG_m)$ is finite of order $p^3$ whereas, by
Proposition~\ref{prop_trivial_gerbe}, $\h^{2r+1}(Y \times \B \mu_p,
\bbG_m)$ has order $p^2$, for $r \geq 1$.

\paragraph{}
Set $p=2$. Let $D_{2m}$ be the dihedral group with $2m$ elements, with
$m$ odd, $m \geq 3$. We denote by $r \in \sfrac{\bbZ}{m \bbZ}$ and $s
\in \sfrac{\bbZ}{2 \bbZ}$ the generators of $D_{2m}$. Let $X=
[\sfrac{\bbA_\bbC^1}{D_{2m}}]$, where the action is given by $(x,
r^is^\epsilon) \mapsto {(-1)}^\epsilon x$, for $1 \leq i \leq m$ and
$\epsilon = 0,1$. By \cite{weibel}, 6.7.10, we have $H^2(D_{2m},
\bbZ)=\sfrac{\bbZ}{2 \bbZ}$, hence $\tau^{(2)}\colon \sfrac{\bbZ}{m
  \bbZ} \rightarrow \sfrac{\bbZ}{2 \bbZ}$ is the zero map. Therefore,
by Remark~\ref{remark_beta} and Proposition~\ref{prop_trivial_gerbe},
$\h^2(X, \bbG_m)= \ker \beta^{(2)}=\ker \tau^{(2)}= \sfrac{\bbZ}{m
  \bbZ}$ whereas $\h^2(Y \times \B \mu_m, \bbG_m)=0$.

\end{document}